\setlist[enumerate,1]{label={\textsc{\arabic*.}}}
\newtheorem{thm}{Theorem}[section]
\newtheorem{lem}[thm]{Lemma}
\newtheorem{cor}[thm]{Corollary}
\newtheorem{obs}[thm]{Observation}
\newtheorem{ass}{Assumption}
\newenvironment{customthm}[1]
  {\innercustomthm}
  {\endinnercustomthm}
\newtheorem{problem}{Problem}
\theoremstyle{definition}
\newtheorem{rem}[thm]{Remark}
\renewcommand{\le}{\leqslant}  
\renewcommand{\ge}{\geqslant}
\newcommand{\wt}{\widetilde}
\newcommand{\ind}{\mathds{1}}
\newcommand{\1}{\mathds{1}}
\newcommand{\eps}{\varepsilon}
\newcommand{\norm}[1]{\left\Vert#1\right\Vert}
\newcommand{\abs}[1]{\left\vert#1\right\vert}
\newcommand{\gibbs}[1]{\left\langle#1\right\rangle}
\newcommand{\ie}{\emph{i.e.,}}
\newcommand{\eg}{\emph{e.g.,}}
\let\ga=\alpha \let\gb=\beta \let\gc=\gamma  
    \let\gk=\kappa \let\gl=\lambda      \let\go=\omega   \let\gs=\sigma  
\let\gC=\Gamma
\newcommand{\cM}{\mathcal{M}}
\newcommand{\cP}{\mathcal{P}}
\newcommand{\cS}{\mathcal{S}}\newcommand{\cT}{\mathcal{T}}
\newcommand{\vm}{\mathbf{m}}
\newcommand{\mvs}{\boldsymbol{s}}
\newcommand{\mvgo}{\boldsymbol{\omega}}
\newcommand{\bN}{\mathbb{N}}
\newcommand{\bR}{\mathbb{R}}
\newcommand{\dR}{\mathds{R}}
\DeclareMathOperator{\E}{\mathds{E}}
\DeclareMathOperator{\pr}{\mathds{P}}
\DeclareMathOperator{\var}{Var}
\DeclareMathOperator{\Exp}{Exp}
\DeclareMathOperator{\N}{N}
\newcommand{\wh}[1]{\widehat{#1}}
\providecommand{\dtv}{\textrm{d}_{\mathrm{TV}}}
\providecommand{\poi}{\mathrm{Poisson}}
\providecommand{\op}{\mathrm{o}_{\mathrm{p}}}
\begin{document}
\title[Random optimization problems at fixed temperatures]{Random optimization problems at fixed temperatures}
\author[Dey]{Partha S.~Dey}
\address{\normalfont (P.S.D.) 
Department of Mathematics,
University of Illinois Urbana Champaign, 
1409 W Green Street, Urbana, IL, USA 61801}
\email{psdey@illinois.edu}
\author[Terlov]{Grigory Terlov}
\address{\normalfont (G.T.) 
Department of Statistics and Operations Research,
University of North Carolina at Chapel Hill, NC, USA 27514}
\email{gterlov@unc.edu}

\begin{abstract} 
This article considers a class of disordered mean-field combinatorial optimization problems. We focus on the Gibbs measure, where the inverse temperature does not vary with the size of the graph and the edge weights are sampled from a general distribution under mild assumptions. 
Our results consist of the Law of Large Numbers and Central Limit Theorems for the log-partition function, the weight of a typical configuration, and the Gibbs average in both quenched and annealed forms. 
We also derive quenched Poisson convergence for the size of the intersection of two independent samples, yielding replica symmetry of the model.  Applications cover popular models from the literature, such as the Minimal Matching Problem, Traveling Salesman Problem, and Minimal Spanning Tree Problem, on a sequence of deterministic and random dense graphs of increasing size.
\end{abstract}
\subjclass[2020]{Primary: 60F05, 82B44, 90C27}
\keywords{Random Assignment, Minimal Matching, Traveling Salesman Problem, Minimal Spanning Tree, Central limit theorem, Partition function, Cluster expansion}

\maketitle
\setcounter{tocdepth}{1}
\tableofcontents

\setcounter{tocdepth}{1}
\tableofcontents

\section{Introduction}

Given an undirected graph with weights assigned to its edges, combinatorial optimization problems ask to find a subgraph from a specific family that optimizes the total weight on its edges. When the underlying weights are random, such problems are called random (combinatorial) optimization problems. Some of the most popular problems in the literature are a minimal matching problem (MMP), traveling salesman problem (TSP), minimal spanning tree problem (MSTP), and minimal $k$-factor problem, see Section~\ref{sec: Review}. Although these problems are often easy to state, their analysis is complicated, and most of the progress so far has required many new ideas from different fields of study. Thus, throughout the years, this topic facilitated and benefited from many innovations in a variety of fields such as physics, discrete probability, computer science, engineering, and algorithm design~\cite{donath1969algorithm,mezard1985replicas, papadimitriou1998combinatorial, SteeleBook,YukichBook}.

In the mean-field case, \ie~when the underlying graph is dense, and the edge-weights are i.i.d., the connection between random optimization problems and statistical mechanics was established in the early 1980s~\cite{kirkpatrick1981lecture} and was particularly influential in shaping the field. In the mid-1980s physicists M{\'e}zard, Krauth, and Nobel laureate Parisi  ~\cite{mezard1985replicas,mezard1987spin,parisi1992spin,krauth1989cavity,mezard1986mean} successfully used cavity and replica methods to get remarkably detailed predictions for MMP and TSP. One of their running assumptions was that these optimization problems are replica-symmetric at low temperatures, which does not hold for classical models in statistical mechanics such as spin glasses. Intuitively, replica symmetry means that two subgraph samples drawn from the Gibbs measure essentially share the same constant fraction of edges among them. 

The question of the validity of this assumption was later studied both numerically~\cite{Bacci, boettcher2000nature, brunetti1991extensive,cerf1997random,Houdayer, percus1999stochastic, sourlas1986statistical, percus1997voyageur} and from a physical theoretic point of view~\cite{mezard1987spin, pagnani2003near, parisi2001neighborhood}. Mathematically replica symmetry ansatz was verified only in one case by W\"astlund in 2012~\cite{WastlundReplicaRA}, who showed it for the MMP on the complete graph $K_{2n}$ with a particular choice of distribution of the weights. W\"astlund states that a similar analysis should hold for the MMP on the complete bipartite graph and for the TSP on the complete graph, however to our knowledge, it has not been confirmed.

To enable the statistical mechanics approach, one considers an auxiliary randomness on the space of all subgraphs, over which one is optimizing, on top of the random environment created by the random weights. Namely, let $\cS$ be the set of all subgraphs of $G$ that are of interest in the optimization problem. For $\pi\in \cS$, we denote its weight by $W(\pi)$. We now introduce the Gibbs measure
\begin{equation*}
\pr_{\gb}(\pi)=\frac{1}{Z(\gb)}\cdot \frac1{\abs{\cS}}\exp\left(-\gb W(\pi)\right),    
\end{equation*}
where $\gb$ is the \textit{inverse temperature} parameter  and $Z(\gb)$ is the normalizing constant known as the \textit{partition function}. Since most of the literature is concerned with identifying the optimal configuration $\pi$, known as the ground state, the inverse temperature tends to infinity as a function of the size of the graph. For instance, if $G$ is the complete graph on $N$ vertices with $\Exp(1)$ weights on its edges one considers $\gb=\wh{\gb}\cdot N$ for some constant $\wh{\gb}\in (0,\infty)$ (\eg~ see~\cite{kirkpatrick1981lecture} and~\cite[Chapter 7]{talagrand2003spin}). 

In this work, we consider \textit{fixed} $\gb\in\bR$. In this case, the resulting measure does not converge to the delta mass function on the optimal configuration but rather is biased towards configurations with smaller weights. One can interpret this case as a high-temperature regime with a general edge-weight distribution. In general, positive temperature versions of these problems are natural to consider as they often lead to interesting behavior. For example, the classical anti-ferromagnetic Ising model is a positive temperature version of the maximum cut problem. In such settings, many of the difficult aspects of optimization problems remain in place, such as the occurrence of frustrations and complicated structure of dependencies. However, it is also, in a sense, close to a function of the family of independent random variables, which, in our case, is made precise by techniques known as cluster expansion. We now briefly summarize our main contributions.

\begin{enumerate}
    \item Our setup is very general in the sense that we impose a very mild condition on the graph (Assumption~\ref{as:Graph}), weights (Assumption~\ref{as:weights}), and we consider a large class of optimization problems (Assumption~\ref{as:optimization}) that includes MMP, TSP, and MSTP. Although the bodies of work for these problems share many common themes, one can rarely derive abstract results that do not rely on particularities of the distribution of the weights and specific properties of a given optimization problem.
    \item We derive the law of large numbers (LLN) and central limit theorem (CLT) for 
    \begin{enumerate}
        \item the log-partition function (Theorem~\ref{thm:log part clt} and Corollary~\ref{thm:Limiting free energy}),
        \item the weight of a typical configuration (Theorem~\ref{thm:typical CLT} and Corollary~\ref{thm:typical LLN}),
        \item the Gibbs average (Theorem~\ref{thm:gibbs CLT}).
    \end{enumerate}
    \item We derive the Poisson limit theorem for the size of the intersection of two independently sampled configurations (Theorem~\ref{thm:poi limit}). This yields that the overlap is strongly concentrated at $0$; thus, the model is replica symmetric in a strong sense (Remark~\ref{rem:repsym}).
    \item We provide variations of our results tailored to multipartite and random graphs (Theorems~\ref{thm:log part cltp} and Observation~\ref{obs:random-graphs}, resp.). It is plausible that random optimization problems on these graphs satisfy our assumptions, and thus, our main results would apply directly. However, it turns out that verifying them on graphs beyond complete and complete bipartite graphs can be involved. We provide extensions of our results to illustrate how one can bypass some of those challenges.
\end{enumerate}

\subsection{Road map}
This paper is organized as follows. In Section~\ref{sec: Review}, we present some popular random optimization problems and review existing literature. After that, in Section~\ref{intro highlight}, we highlight the consequence of our results to one of the models. We follow up with the assumptions in Section~\ref{intro setup} and heuristics in Section~\ref{intro heuristic}. In Section~\ref{intro cluster} we cover the background on the cluster expansion - our primary technique throughout the paper. We provide the glossary of notations in Section~\ref{intro notations}. Section~\ref{sec mainres} is dedicated to the statement of our main results, while Section~\ref{sec:extensions of main res} covers their extensions to multipartite and random graphs. We apply our main results to various optimization problems in Section~\ref{sec appl}. Finally, we present the proofs of the main results in Section~\ref{sec proofs} and finish by discussing the results and further questions in Section~\ref{sec future}.

\subsection{Review of random optimization problems}\label{sec: Review}
In this section, we introduce various random optimization problems and review some of their history and relevant results. 
Due to the large volume of literature on this topic, we mostly focus on highlighting results in the mean-field case that are relevant to our setting. The list of references presented below is far from being complete and we refer the reader to the references therein for more details. Beyond the mean-field case, we refer the reader to the books~\cite{SteeleBook, YukichBook}.

A common theme throughout this section is that mathematical treatment of combinatorial optimization problems started in the literature in the first half of $20^{\textnormal{th}}$ century from the perspectives of graph theory and algorithmic design. Motivated by this progress, the randomized versions of these problems became of interest in the second half of $20^{\textnormal{th}}$ century and were often approached with techniques from statistical physics. Since then, the behavior of the average has been fairly understood in the majority of the settings. On the other hand, central limit theorems (CLT), although conjectured, were established only in a handful of situations (see~\cite[Section 5]{SouravSurvey}).

\subsubsection{Minimal Matching Problem (MMP)}\label{intro MMP}

Let $G=(V, E)$ be a graph, such that $\abs{V}$ is even, and sequence of weights on its edges $\{\go_e\}_{e\in E}$. The minimal matching problem, also known as the Assignment Problem, asks to find a perfect matching of vertices $(i,\pi(i))$ such that the sum of edge-weights between matched vertices is minimized, here $i\in V$ and $\pi(i)$ is a bijection between two equal size partitions of $V$. Most commonly, $G$ is taken to be the complete graph $K_{2n}$ or the complete bipartite graph $K_{n,n}$, when the weights $\{\go_e\}_{e\in E}$ are random they are usually taken to be independent and with continuous distributions so that the minimizer is unique.

As with many combinatorial optimization problems, MMP has no unique point of origin. The first mathematical treatment in the Euclidean case dates back to Monge~\cite{monge1781memoire}, who was studying the optimal transport of the earth. The question of finding a perfect matching in a graph dates back at least to 1916, and the work of K\"onig on bipartite dense graphs~\cite{konig1916graphen} that can be seen as a deterministic MMP where weights $\go_e$ take values in $\{0,1\}$. Kurtzberg first considered the randomized version of this question in 1962~\cite{kurtzberg1962approximation}. He considered the case of $\textrm{Uniform}(0,1)$ weights on the complete bipartite graph. Both deterministic and randomized variations attracted the attention of computer scientists in the second half of $20^{\textnormal{th}}$ century and led to various exciting developments in algorithm design~\cite{munkres1957algorithms, kurtzberg1962approximation, donath1969algorithm}. Partly motivated by this progress, it became of great interest to analyze and compute the asymptotic average weight of minimal matching~\cite{Walkup79, linearprograms, KarpUppbnd, Lazarus, CoppersmithSorkin}. 

In the 1980s this problem was connected to statistical physics in the sequence of works of M\'ezard and Parisi~\cite{mezard1985replicas,mezard1987spin,parisi1992spin,mezard1986mean}. These authors developed the replica method and applied it to the minimal matching problem on the complete bipartite graph with nonnegative weights whose distribution decays polynomially near zero. Their results predicted that the total weight of the minimal matching converges to $\zeta(2)$ as the graph size tends to infinity. Although the authors did not specify the mode of convergence, at least convergence in probability was implied. 
This was later established rigorously in the case of exponentially distributed weights and the convergence in expectation in two independent works, one by Aldous~\cite{AldousZeta} and another by Nair, Prabhakar, and Sharma~\cite{NPS}. Generalizations of these results to the $k$ matching, where $G=K_{n,kn}$ and each of the $n$ vertices is matched with a set of $k$ vertices, were shown in~\cite{NPS, LinussonWastlund}. Convergence in probability of the weight of the minimal matching follows from works of Aldous~\cite{AldousAsymptotics,AldousZeta}.  Analogous results were shown for the complete graph with the even number of vertices~\cite{Wastlundcomplete,hessler2008concentration}. In 2005, W\"astlund~\cite{wastlund2005variance} gave an independent proof for the convergence of the average to $\zeta(2)$ and obtained formulas for the variance and the higher moments. He showed that the variance is $$\frac{4\zeta(2)-4\zeta(3)}{n}+O\left(\frac{1}{n^2}\right),$$ 
which slightly differs in the constant from the value of $2$ originally conjectured by Alm and Sorkin~\cite{alm2002exact}.  As mentioned above, the works of M\'ezard and Parisi assumed replica symmetry of the model shown only in the case of the MMP on $K_{2n}$~\cite{WastlundReplicaRA}. 
Talagrand \cite[Chapter 7]{talagrand2003spin} considered the MMP on $K_{n,m}$, where $m=\lfloor n(1+\ga)\rfloor$ for some $\ga\in (0,\infty)$. He showed explicit limiting behavior of the partition function for all $\gb\le \gb(\ga)$. Finally, the conjecture of asymptotic normality of the weight of the minimal matching remains open~\cite[Conjecture 1.1]{wastlund2005variance} and~\cite[Section 11]{hessler2008concentration}. Central limit theorems for MMP  were proven recently in 2019 by Barrio and Loubes in the Euclidean case~\cite{MMPCLT} and by Cao in 2021 in the case of sparse Erd\H{o}s-R\'enyi graphs with exponential weights on the edges~\cite{CaoCLT}. We briefly highlight the applications of our results to the MMP on $K_{n,n}$ in Section~\ref{intro highlight}. For the rest of the applications to this model, see Section~\ref{sec: MMP}.

\subsubsection{Traveling Salesman Problem (TSP)}\label{intro TSP}
The traveling salesman problem asks to find a Hamiltonian cycle, that is a cycle that goes through every vertex of the graph exactly once, with the smallest combined weights of its edges.

The first mathematical treatment of the traveling salesman problem dates back to $19^{\textnormal{th}}$ century and the works of Hamilton and Kirkman (see~\cite{biggs1986graph}). The first general formulation of the problem is due to Menger in 1928~\cite{menger1928theorem}, followed by a spike of research on this problem in various forms. For more on the history of this problem, we refer to~\cite{schrijver2005history}. Similarly to MMP, in the 1980s TSP a randomized version of TSP became of interest and was heavily studied from the perspective of statistical physics~\cite{mezard1986replicaTSP, krauth1989cavity, percus1999stochastic, sourlas1986statistical,vannimenus1984statistical, mezard1985replicas} using similar techniques such as cavity and replica methods as well as numerical simulations. In the line of work~\cite{karp1985probabilistic, FriezeSorkinTSP, DyerFriezeTSP, KarpTSP} using patching algorithms, it was rigorously shown that the weight of the minimum Hamiltonian cycle on the directed $K_n$ with independent Uniform[0,1] random weights with high probability is the same as the weight of the minimal matching. In 2004~\cite{FriezeTSP}, Frieze showed a similar result in the symmetric version of this problem, \ie~on undirected $K_n$, this time proving that with high probability the weight of the optimal Hamiltonian cycle is equal to the weight of the optimal 2-factor (a 2-regular subgraph or, equivalently, a union of cycles). W\"astlund computed the limiting expected value of the minimum Hamiltonian cycle on complete bipartite graph~\cite{wastlund2006limit} and on complete graph~\cite{wastlund2006traveling, WastlundTSP}. Although the asymptotic normality of the minimum weight over Hamiltonian cycles was asked in both Euclidean and mean-field settings, see~\cite[Section 5]{SouravSurvey}, we are unaware of any results in this direction beyond concentration inequalities derived in \cite{RheeTalagrandTSP}.
For applications of our result to this model, see Section~\ref{sec: TSP}.

\subsubsection{Minimal Spanning Tree Problem (MSTP)}\label{intro MST}

The minimal spanning tree (MST) problem asks to find a spanning tree with the minimal combined weight of its edges.

Although this problem likely appeared independently in several sources, as we know, it was first formulated by Bruv\r{u}ka in 1926~\cite{boruuvka1926jistem}. It was then popularized in the 1950s by works of Kruskal~\cite{kruskal1956shortest} and Prim~\cite{prim1957shortest}. For more on the history of the origins of this problem and its algorithmic solutions, we refer to~\cite{graham1985history}. One aspect in which this problem is different from the minimal matching and the minimal Hamiltonian cycle is that MST is a matroid, and hence, it can be obtained by a greedy algorithm in polynomial time. This, in turn, leads to various useful properties that make this model more tractable. The study of the randomized version began in the 1980s (see~\cite{SteelMST, SteeleSubadditive, lueker1981optimization, Avram92} among others) with particular emphasis on the Euclidean setting. In 1985, Frieze~\cite{FriezeMST} showed that on $K_n$ with nonnegative i.i.d.~weights on the edges, the expectation of the total weight of the MST is equal to $\zeta(3)/D$ where $D$ is the value of the derivative of the common distribution function of the weights at $0$. The first distributional convergence result for this model was shown a decade later by Janson~\cite{JansonMSTCLT}, who showed that on $K_n$ with Uniform$(0,1)$ or $\Exp(1)$ distributed weights the total weight of the MST obeys the following CLT 
$$
\sqrt{n}(W_n-\zeta(3))\Rightarrow \N(0,6\zeta(4)-4\zeta(3)).
$$
For discussion on the constant that appears in the variance see~\cite{JansonMSTCLTadd,wastlund2005evaluation}. To our knowledge, the rate of convergence in this CLT remains open.

In non-i.i.d.~cases CLTs for MST on the complete graph with weights given by the distances of $n$ uniform points in $[0,1]^d$, for $d\ge 2$, and on Poisson point process on $[0,n^{1/d}]^d$ was shown by Kesten and Lee~\cite{KestenLee96} via a martingale argument. Around the same time, Alexander~\cite{AlexanderMST} proved CLT for the Poissonized version of the problem when $d=2$ via the percolation theoretic approach suggested earlier in~\cite{ramey1982non}. Later, various related results were shown, including CLT for the number of vertices in the MST of a certain degree and for the independence number~\cite{LeeMST1,LeeMST2,LeeIndep}. In 2015, Chatterjee and Sen~\cite{ChatterjeeSenMST, SouravSurvey} used Stein's method to derive the rate of convergence in CLT for the weight of the MST on Poisson points and subsets $d$-dimensional lattice.
For applications of our result to this model, see Section~\ref{sec: MST}.

\subsubsection{Minimal $k$-factor problem}
Minimal $k$-factor problem is a natural generalization of MMP and TSP. Given a graph $G=(V, E)$ with an even number of vertices, and a sequence of edge-weights  $\{\go_e\}_{e\in E}$ the $k$-factor problem asks to find a $k$-regular subgraph of $G$ such that the sum of weights on the edges of the selected subgraph is minimized. Notice that we do not require connectivity of the subgraph, and thus, when $k=1$, this problem reduces to MMP. One could impose the connectivity condition in which case $k=2$ would have corresponded to TSP. To make computations easier, we decided to focus on the former case, although we believe that a similar analysis holds for connected $k$-factor. Understanding the limiting distribution of the minimal $k$-factor is important for hypothesis testing in the planted $k$-factor problem. We refer the interested reader to~\cite{Sicuro_kfact} for more details on the planted $k$-factor problem.
For applications of our result to this model see Section~\ref{sec:kfact}.

\subsubsection{Two main properties}

Despite the broad applicability of our setup to numerous well-known optimization problems (\eg\ those presented above), certain classes of problems lie beyond the scope of this work. We mainly require a problem to have two traits in order to fall into our setup, namely that \textit{every graph in the family over which one optimizes has the same number of edges} and that \textit{any partial matching can be extended to such a graph}. For instance, every spanning tree on $G=(V,E)$ contains $\abs{V}-1$ edges, and every partial matching can be extended to such a tree, yielding that MSTP satisfies these properties. On the other hand, many models do not possess at least one of these properties. For instance, the first passage percolation aims to find a path between two given vertices with the smallest combined weight. Clearly, such paths may contain different number of edges. Similarly, the maximum cut problem does not specify the sizes of allowed cuts over which one optimizes, thus failing the first property. The minimal star problem, which asks to find a spanning star subgraph of minimal weight fails the second required property because no star subgraph can contain two vertex-disjoint edges. We discuss the necessity and approximate versions of these properties in Section~\ref{sec future}.

\subsection{Highlight of the main results in application to MMP on $K_{n,n}$}\label{intro highlight}
Before proceeding to general results, we first present their consequences for the minimal matching problem on the complete bipartite graph $K_{n,n}$. We decided to focus on this problem in this highlight because it is a better-studied model and simpler to analyze at finite temperatures. 

Let $E$ be the set of edges of $K_{n,n}$ and $\{\go_e\}_{e\in E}$ be i.i.d.~random variables from some \textit{general distribution} such that for all $\gb \in [0,\infty)$. In particular, $\go_e$ may take negative values, a case rarely covered by the results present in the literature.
\begin{align*}
     \psi(\gb):=\log\E e^{-\gb\go_e}<\infty.
\end{align*}
We define 
\begin{equation}\label{eq:v}
    v_\gb^2:=\E e^{-2\gb\go_e-2\psi(\gb)}-1=e^{\psi(2\gb)-2\psi(\gb)}-1.
\end{equation}

Notice that under the imposed restrictions on the distribution of the weights $\omega$, we allow them to be infinite with positive probability. This will be used to derive the annealed version of our main results on random graphs; see Section~\ref{sec:random graphs}.

In the spirit of the statistical physics approach, we consider the Gibbs measure on the space of all perfect matchings with the energy given by the total weight of the matching. Let $\cM$ be the set of all perfect matchings in $K_{n,n}$. For any $\pi\in \cM$ define its total weight as 
\[
W(\pi):=\sum_{e\in\pi} \go_e = \sum_{e\in E}\go_e\1_{e\in\pi}.
\]
Then given the weights on edges $\mvgo=(\go_e)_{e\in E}$ and $\gb \in [0,\infty)$
\[
\pr_{\gb,\mvgo}(\pi)=\frac{1}{Z(\gb,\mvgo)}\cdot\frac{1}{n!}\exp\left(-\gb W(\pi)\right),
\]
where $Z(\gb,\mvgo)$ is the normalized constant known as the partition function. We first establish probabilistic limit theorems for the log-partition function, commonly known as free energy.
\begin{thm}\label{thm intro MMP free energy} In the setup as above for any fixed $\gb\in [0,\infty)$ the following limits hold as $n\to \infty$,
    \begin{align*}
        \log Z(\gb,\mvgo) - n\psi(\gb) &\Rightarrow \N\left(-v_\gb^2/2,v_\gb^2\right).
    \end{align*}
\end{thm}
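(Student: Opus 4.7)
The plan is to reduce the claim to a lognormal CLT for a mean-one normalized partition function, extracted from a chaos (cluster) expansion. First, I normalize: set $Y_e := e^{-\gb\go_e-\psi(\gb)}$, so $\E Y_e=1$, $\var Y_e = v_\gb^2$, and $Y_e = 1+U_e$ with $\E U_e=0$; factoring out the mean gives $Z(\gb,\mvgo) = e^{n\psi(\gb)}\wt Z$ with
\[
\wt Z := \frac{1}{n!}\sum_{\pi\in\cM}\prod_{e\in\pi}Y_e,\qquad \log Z(\gb,\mvgo) - n\psi(\gb) = \log \wt Z,
\]
so the claim becomes $\wt Z \Rightarrow \exp(v_\gb X - v_\gb^2/2)$ with $X\sim \N(0,1)$.

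Next, the chaos expansion: expanding $Y_e = 1 + U_e$ and using that any partial matching $S$ of size $k$ extends to $(n-|S|)!$ perfect matchings,
\[
\wt Z = \sum_{k=0}^n \frac{T_k}{(n)_k},\qquad T_k := \sum_{\substack{S\text{ matching in }K_{n,n}\\|S|=k}}\prod_{e\in S}U_e,
\]
with $(n)_k = n!/(n-k)!$. Independence and $\E U_e = 0$ force orthogonality across matchings, $\E T_kT_\ell = \gd_{k\ell}\binom{n}{k}^2 k!\, v_\gb^{2k}$, yielding $\var(T_k/(n)_k) = v_\gb^{2k}/k!$ and $\var\wt Z = \sum_{k\ge 1}v_\gb^{2k}/k! \to e^{v_\gb^2}-1$, which matches the target lognormal's variance and supplies the uniform-in-$n$ $L^2$ tail bound $\sum_{k>K}v_\gb^{2k}/k!\to 0$ as $K\to\infty$.

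The first chaos $T_1/n$ is a centered sum of $n^2$ i.i.d.\ variables of variance $v_\gb^2$ with all moments finite (since $\psi(p\gb)<\infty$ for all $p>0$), so $T_1/n \Rightarrow v_\gb X$ by the classical CLT. The technical heart of the proof is the joint convergence
\[
\bigl(T_k/(n)_k\bigr)_{k=0}^K \Longrightarrow \bigl(v_\gb^k H_k(X)/k!\bigr)_{k=0}^K \qquad\text{for every fixed }K,
\]
where $H_k$ is the probabilist's Hermite polynomial. The mechanism is already visible at $k=2$: expanding $T_1^2 = \sum_e U_e^2 + \sum_{e\ne e',\,\text{adj.}}U_eU_{e'} + 2T_2$, then applying LLN to $\sum_e U_e^2$ and a variance bound to the adjacent-edge sum yields $T_2/(n)_2 \to \tfrac{v_\gb^2}{2}(X^2-1) = \tfrac{v_\gb^2}{2!}H_2(X)$; the same pattern at higher orders is what the cluster expansion organizes systematically. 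Combined with the $L^2$ tail bound and the Hermite generating identity $\sum_{k\ge0}v_\gb^k H_k(x)/k! = e^{v_\gb x - v_\gb^2/2}$, this gives $\wt Z \Rightarrow \exp(v_\gb X - v_\gb^2/2)$, and the continuous mapping theorem yields the stated CLT for $\log Z - n\psi(\gb)$.

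The main obstacle is the joint convergence: each $T_k$ is a bipartite $k$-matching $U$-statistic in the $U_e$, not a literal power of $T_1$, and one must show that every ``non-pair'' contraction (from edges coinciding or sharing a vertex) is of strictly lower order, so that only the pair-contractions survive and assemble into $v_\gb^k H_k(X)/k!$. This is precisely the role of the cluster-expansion machinery of Section~\ref{intro cluster}: systematic combinatorial bookkeeping together with uniform bounds valid under only the minimal moment assumption $\psi(p\gb)<\infty$.
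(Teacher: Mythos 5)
Your route reaches the correct limit but is organized genuinely differently from the paper's. The paper's proof (Lemma~\ref{lem:Zhat} plus Theorem~\ref{thm:log part clt}) shows that $\wh Z(\gb)$ is within $O(m^{-1/2})$ in $L^2$ of the \emph{factorized} product $\prod_{e\in E}(1+p\xi_e)$, after which $\log\wh Z = \sum_e\log(1+p\xi_e)+\op(1)$ is a sum over independent edges and the CLT/LLN applied to the linear and quadratic terms of the Taylor expansion of $\log(1+x)$ give the Gaussian limit in one stroke; the higher chaos levels are never examined individually because independence packages them all at once inside the product. You instead work directly with the chaos decomposition $\wh Z = \sum_k T_k/(n)_k$, compute the exact per-level variance $v_\gb^{2k}/k!$ by the orthogonality of distinct matchings, truncate using the resulting uniform-in-$n$ $L^2$ tail, and then aim to prove that each fixed chaos level converges jointly to the Hermite polynomial $v_\gb^k H_k(X)/k!$ of a single Gaussian $X$, so the generating identity $\sum_k v_\gb^k H_k(X)/k! = e^{v_\gb X - v_\gb^2/2}$ assembles the lognormal limit and continuous mapping gives the stated CLT (continuity of $\log$ on $(0,\infty)$ is legitimate here since the lognormal is a.s.\ positive). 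What your route buys is finer information — the full joint limit law of the chaos components, with exact variances at every finite $n$ — and a cleaner conceptual picture of why the limit is lognormal. What it costs is that the ``joint convergence to Hermites'' step, which you verify only for $k=2$ and defer for general $k$ to ``systematic combinatorial bookkeeping,'' is precisely where the work lives: one must show that in the expansion of $(T_1/n)^k$ only the vertex-disjoint contraction survives, all others (coinciding edges, adjacent edges) being $o_p(1)$ after division by $(n)_k$. That control is essentially equivalent to the paper's Lemma~\ref{lem:Zhat} (whose proof bounds exactly those non-matching contractions via Assumptions~\ref{as:prod of edges} and~\ref{as:non matching}), so the deferred step is real work, not a triviality — though it is manageable, and it does become available uniformly in $k$ under $\psi(p\gb)<\infty$ for all $p>0$ as you note. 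If you carry it out, your argument is correct and your answer agrees: $\gc = 1/2$ for $K_{n,n}$, mean $-v_\gb^2/2$ and variance $v_\gb^2$.
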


Next, we analyze the structure of the intersections of the independent samples from the Gibbs measure.
\begin{thm}\label{thm intro MMP overlap}
    In the setup as above for any fixed $\gb\in [0,\infty)$ let $\pi_1$ and $\pi_2$ be independent samples from $\pr_{\gb,\mvgo}$, then the number of common edges between $\pi_1$ and $\pi_2$, denoted by $\abs{\pi_1\cap\pi_2}$, converges to $\poi\left(e^{\psi(2\gb)-2\psi(\gb)}\right)$ in probability. 
\end{thm}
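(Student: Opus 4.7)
My plan is to prove the quenched Poisson limit via the \emph{method of factorial moments}. Writing $N:=\abs{\pi_1\cap\pi_2}$, $\lambda:=e^{\psi(2\gb)-2\psi(\gb)}$, and
\[
M_k(\mvgo):=\sum_{n\ge 0}(n)_k\,\pr_{\gb,\mvgo}^{\otimes 2}(N=n)
\]
for the conditional $k$-th factorial moment of $N$ given the weights, it suffices to show $M_k(\mvgo)\probc\lambda^k$ for every integer $k\ge 1$, since the Poisson distribution is determined by its factorial moments. This implies that the conditional law of $N$ given $\mvgo$ converges weakly, in probability, to $\poi(\lambda)$.

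The first step is the edge expansion. By independence of $\pi_1$ and $\pi_2$,
\[
M_k(\mvgo)=\sum_{M}\pr_{\gb,\mvgo}[M\subseteq\pi]^{\,2},
\]
with the sum running over ordered $k$-tuples of vertex-disjoint edges $M$ in $K_{n,n}$. A short computation gives
\[
\pr_{\gb,\mvgo}[M\subseteq\pi]=\frac{(n-k)!}{n!}\,e^{-\gb W(M)}\,\frac{Z^{(M)}(\gb,\mvgo)}{Z(\gb,\mvgo)},
\]
where $Z^{(M)}$ is the partition function of MMP on the residual complete bipartite graph $K_{n-k,n-k}$ obtained by deleting the $2k$ endpoints of $M$. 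Introduce the i.i.d.\ variables $U_e:=e^{-2\gb\go_e-2\psi(\gb)}$ (so that $\E U_e=\lambda$), together with $\tilde Z:=Z\,e^{-n\psi(\gb)}$, $\tilde Z^{(M)}:=Z^{(M)}e^{-(n-k)\psi(\gb)}$, and $R_M:=(\tilde Z^{(M)}/\tilde Z)^{2}$. Then the factorial moment takes the clean form
\[
M_k(\mvgo)=\frac{1}{(n)_k^{\,2}}\sum_{M}R_M(\mvgo)\prod_{e\in M}U_e.
\]

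The second step is to show that $R_M$ is essentially $1$. Theorem~\ref{thm intro MMP free energy} guarantees that $\log\tilde Z$ and $\log\tilde Z^{(M)}$ have the same log-normal weak limit, but we need the stronger assertion that these two quantities are close as random variables on the common probability space. This is furnished by the cluster expansion of Section~\ref{intro cluster}: it writes $\log\tilde Z=\sum_I\phi_I$ as a sum of local contributions indexed by clusters of edges, and the analogous expansion for $\log\tilde Z^{(M)}$ differs only through the $O(1)$ clusters touching the $2k$ deleted vertices, each contributing a term of $L^2$ size $O(1/n)$. Consequently $\E(R_M-1)^2=O(1/n)$ uniformly in $M$, and Cauchy–Schwarz yields
\[
\E\!\left|M_k(\mvgo)-\frac{1}{(n)_k^{\,2}}\sum_{M}\prod_{e\in M}U_e\right|=O(n^{-1/2}).
\]

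Finally, the residual sum $\frac{1}{(n)_k^{\,2}}\sum_{M}\prod_{e\in M}U_e$ is a normalized symmetric $U$-statistic of degree $k$ in the i.i.d.\ family $\{U_e\}_{e\in E(K_{n,n})}$ with mean $\lambda$, whose variance is finite since the standing assumption forces $\psi(4\gb)<\infty$. A direct variance / Chebyshev estimate gives convergence in probability to $(\E U)^k=\lambda^k$, completing the argument. The main obstacle I anticipate is the uniform quantitative control of $R_M$ across the $\asymp n^{2k}$ partial matchings: pointwise asymptotics for a fixed $M$ follow immediately from the free-energy CLT, but preventing random fluctuations from aggregating upon summation is precisely where the cluster-expansion machinery underlying the paper becomes indispensable.
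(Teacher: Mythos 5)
Your proposal is correct in spirit but takes a genuinely different route from the paper, and a couple of the intermediate claims are mis-stated even though the final estimates you need do hold.

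The paper (Theorem~\ref{thm:poi limit}) proves the Poisson limit by controlling the quenched moment generating function $\gibbs{e^{\theta\abs{\pi\cap\pi'}}}_\gb$ directly: both the numerator and the denominator $\wh Z(\gb)^2$ are approximated by explicit products over edges via cluster expansions (a second, more delicate expansion with the variables $Y_e=(e^{\theta}-1)(1+\xi_e)^2+\xi_e^2$ in place of $\xi_e$), and the log-MGF is then shown to converge in probability to $2\gc(e^\theta-1)e^{\psi(2\gb)-2\psi(\gb)}$. You instead argue through factorial moments, rewrite $M_k(\mvgo)$ in terms of residual partition functions $Z^{(M)}$, control the ratio $R_M$, and finish with a variance estimate for a U-statistic. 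This is a legitimate alternative that trades the paper's second cluster expansion for a perturbative comparison of partition functions on $K_{n,n}$ versus $K_{n-k,n-k}$; the final reduction to a U-statistic is cleaner, whereas the paper avoids ever introducing $Z^{(M)}$. Both ultimately rest on Lemma~\ref{lem:Zhat}.

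Two points need fixing, though neither is fatal. First, the sentence ``differs only through the $O(1)$ clusters touching the $2k$ deleted vertices, each contributing a term of $L^2$ size $O(1/n)$'' is not correct as stated: there are $\Theta(kn)$ single-edge clusters incident to the removed vertices, each with $L^2$ size $\Theta(1/n)$, and only because they are independent mean-zero does their sum have $L^2$ size $O(\sqrt{k/n})$; moreover the density $p$ itself shifts from $1/n$ to $1/(n-k)$, which perturbs every one of the $(n-k)^2$ surviving single-edge terms by $O(k/n^2)$ and contributes another $O(k/n)$ in $L^2$. The correct accounting still delivers $\norm{R_M-1}_2=O(\sqrt{k/n})$ and hence your target $O(n^{-1/2})$ error, so the conclusion survives, but the justification as written would not. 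Second, $R_M$ and $\prod_{e\in M}U_e$ are not independent (since $\wh Z$ also involves the edges of $M$ through factors $(1+p\xi_e)$), so the passage from your $L^2$ bound on $R_M-1$ to an $L^1$ bound on the sum should explicitly invoke Cauchy--Schwarz applied to the product, as you do, rather than a product of norms presupposing independence; the residual coupling through edges of $M$ is $O(p)=O(1/n)$ and therefore harmless, but it deserves a sentence.
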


In particular, Theorem~\ref{thm intro MMP overlap}  implies replica symmetry, see Remark~\ref{rem:repsym}.

\begin{rem}[$\abs{\pi_1\cap\pi_2}$ at $\gb=0$]\label{indepintersections}
    Theorem~\ref{thm intro MMP overlap} states that the common number of edges between two matchings sampled independently from $\pr_{\gb,\mvgo}$ converges to $\poi(e^{\psi(2\gb)-2\psi(\gb)})$ in probability. For $\gb=0$ the limiting distribution becomes $\poi\left(1\right)$. On the other hand, when $\gb=0$ the statement reduces to the number of common edges between two perfect matchings on $K_{n,n}$ sampled independently and uniformly at random. This in turn can be converted to the question of the number of fixed points in a uniform permutation of $[n]$. Classically, it is known to converge in distribution to $\poi\left(1\right)$, with various proofs available in the literature including the one via the size-bias approach of the Stein--Chen method~\cite[Example 4.21]{RossSurvey}.
    The same trick of rewriting the problem in terms of the number of fixed points of a uniform permutation yields that the size of the intersection of two Hamiltonian cycles on $K_{n}$ is asymptotically $\poi\left(2\right)$. It is not obvious how to apply the same idea in the case of MST. However, one can still derive the analogous result for MST on $K_{n}$ via the Stein--Chen method using a different approach as shown in Theorem~\ref{thm:MSTuniform}.
\end{rem}

We further establish probabilistic limit theorems for the weight of a typical matching under the Gibbs measure and the Gibbs average, \ie~the expectation with respect to the Gibbs measure given the random weights on the edges.

\begin{thm}\label{thm intro MMP typical}
    In the setup as above for any fixed $\gb\in [0,\infty)$ and $\pi$ sampled from $\pr_{\gb,\mvgo}$ the following limits hold as $n\to \infty$
    \begin{align*}
         \frac{1}{\sqrt{n}}\left(W(\pi)+n\psi'(\gb)\right)&\Rightarrow \N\left(0,\psi''(\gb)\right)\text{ and }
         \gibbs{W(\pi)}_\gb+n\psi'(\gb)\Rightarrow \N(\mu,\gs^2),
    \end{align*}
    where
        $$\mu=\left(\psi'(\gb)-\psi'(2\gb)\right)e^{\psi(2\gb)-2\psi(\gb)}$$
        
        and 
        
        $$\gs^2=\left((\psi'(\gb)-\psi'(2\gb))^2+\psi''(2\gb)\right)e^{\psi(2\gb)-2\psi(\gb)}.$$
\end{thm}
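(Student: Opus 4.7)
The plan is to derive both CLTs from a single input: a local asymptotic expansion of $\log Z(\gb,\mvgo)$ in $\gb$. Throughout, write $\cE_n(\gb,\mvgo) := \log Z(\gb,\mvgo) - n\psi(\gb)$ for the recentered log-partition function. The two algebraic identities I would repeatedly use are
\[
\E_{\gb,\mvgo}\!\left[e^{tW(\pi)}\right] = \frac{Z(\gb-t,\mvgo)}{Z(\gb,\mvgo)}
\quad\text{and}\quad
\gibbs{W(\pi)}_\gb = -\partial_\gb \log Z(\gb,\mvgo) = -n\psi'(\gb) - \partial_\gb \cE_n(\gb,\mvgo),
\]
both valid because the Boltzmann factor $e^{-\gb W(\pi)}$ is the only $\gb$-dependent piece of the Gibbs weight. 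The cluster expansion from Section~\ref{intro cluster}---already the workhorse behind Theorem~\ref{thm intro MMP free energy}---supplies the needed smoothness of $\cE_n$ in $\gb$.

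For the first CLT, I plug $t = s/\sqrt{n}$ into the MGF identity and Taylor expand the deterministic piece,
\[
n\bigl[\psi(\gb - s/\sqrt{n}) - \psi(\gb)\bigr] = -s\sqrt{n}\,\psi'(\gb) + \tfrac{s^2}{2}\psi''(\gb) + O(n^{-1/2}),
\]
so the exponent of $Z(\gb - s/\sqrt{n},\mvgo)/Z(\gb,\mvgo)$ becomes $-s\sqrt{n}\,\psi'(\gb) + s^2\psi''(\gb)/2 + \bigl[\cE_n(\gb-s/\sqrt{n},\mvgo) - \cE_n(\gb,\mvgo)\bigr] + o(1)$. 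The bracketed stochastic remainder tends to $0$ in probability because the polymer weights are smooth in $\gb$ with $n$-uniform derivative bounds coming from the cluster expansion. A $(1+\eps)$-th annealed moment estimate on the ratio $Z(\gb-s/\sqrt{n},\mvgo)/Z(\gb,\mvgo)$---obtained again from cluster-expansion bounds---provides the uniform integrability needed to pass the expectation through the limit, giving $\E\exp\bigl(s(W(\pi)+n\psi'(\gb))/\sqrt{n}\bigr) \to \exp(s^2\psi''(\gb)/2)$ and hence the Gaussian limit by L\'evy's continuity theorem.

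For the Gibbs-average CLT, the second identity reduces the problem to identifying the distributional limit of $-\partial_\gb \cE_n(\gb,\mvgo)$. The plan is to differentiate the cluster expansion for $\cE_n$ term by term in $\gb$ and analyze the resulting random sum with the same truncation and CLT argument used for Theorem~\ref{thm intro MMP free energy}. The dominant contributions come from the two-edge polymers (pairs of edges sharing exactly one vertex)---the same clusters driving the Poisson limit in Theorem~\ref{thm intro MMP overlap}---whose $\gb$-derivatives introduce extra weight factors proportional to $\go_e + \go_{e'}$; these factors are the source of the compound-Poisson-shaped mean $\mu$ and variance $\gs^2$ claimed in the statement, with the computation boiling down to first- and second-moment calculations for the tilted pair $(\go_e,\go_{e'})$. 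A Lindeberg-type argument, applied to the differentiated polymer sum exactly as in the proof of Theorem~\ref{thm intro MMP free energy}, then yields the Gaussian limit with the stated moments.

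The main obstacle, shared between the two parts, is upgrading the single-$\gb$ weak convergence established for $\cE_n(\gb,\mvgo)$ in Theorem~\ref{thm intro MMP free energy} to joint control of $\cE_n$ and $\partial_\gb \cE_n$, uniformly over a small interval around $\gb$. This is what forces the cluster-expansion representation into the argument rather than treating Theorem~\ref{thm intro MMP free energy} as a black box: term-by-term differentiability of the polymer sums, combined with absolute convergence of the $\gb$-derivative series (ensured by the standing assumption $\psi(\gb)<\infty$ on an open interval containing $\gb$), is exactly what justifies both the Taylor-remainder step in Part~1 and the interchange of limit and derivative in Part~2.
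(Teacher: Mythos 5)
Your Part 1 matches the paper's proof of Theorem~\ref{thm:typical CLT}: the paper also writes the Gibbs MGF as a ratio of partition functions, replaces $\wh Z(\gb)$ by $\prod_{e\in E}(1+p\,\xi_e(\gb))$ via Lemma~\ref{lem:Zhat}, shows the resulting ratio of products tends to $1$ in probability (Lemma~\ref{lem: prod ratio}), and Taylor-expands $\psi$. The $L^{1+\eps}$ moment bound you invoke to pass the outer expectation inside is a reasonable way to finish an annealed version, though the paper treats the statement as quenched (the Gibbs MGF converging in probability to a deterministic limit), which avoids that step.

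Your Part 2 contains a genuine conceptual error about which clusters drive the result. The leading term in $-\partial_\gb\log\wh Z(\gb)$ is the \emph{single-edge} sum $p\sum_{e\in E}V_e$, where $V_e=\partial_\gb\xi_e(\gb)=(-\go_e-\psi'(\gb))(1+\xi_e(\gb))$, and this already produces the Gaussian with variance $2\gc\,\E V_e^2$. The shifted mean comes from the SLLN applied to the next-order \emph{single-edge} term $-p^2\sum_{e\in E}V_e\,\xi_e \to -2\gc\,\E[V_e\xi_e]$. The quantities $\psi(2\gb),\psi'(2\gb),\psi''(2\gb)$ appear because $V_e^2$ and $V_e\xi_e$ are quadratic in $e^{-\gb\go_e}$ on a \emph{single} edge $e$, not because of any pairing: explicitly $\E V_e^2=\bigl[(\psi'(2\gb)-\psi'(\gb))^2+\psi''(2\gb)\bigr]e^{\psi(2\gb)-2\psi(\gb)}$ and $\E[V_e\xi_e]=(\psi'(2\gb)-\psi'(\gb))e^{\psi(2\gb)-2\psi(\gb)}$. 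Your claim that the dominant contribution comes from ``two-edge polymers (pairs of edges sharing exactly one vertex)'' cannot be right: in the matching model two edges sharing a vertex are not simultaneously containable in any $\pi\in\cS$, so such $\gC$ have $\pr_\pi(\gC\subseteq\pi)=0$ and are simply absent from the expansion, while the valid two-edge clusters (vertex-disjoint pairs) contribute at a lower order after taking logs. Likewise, no ``tilted pair $(\go_e,\go_{e'})$'' moment calculation occurs; every moment needed is of one $\go_e$ under the single tilt $e^{-2\gb\go_e}$. The paper sidesteps any interchange of $\partial_\gb$ with the random series by directly cluster-expanding the numerator of $\gibbs{W(\pi)}_\gb+m\psi'(\gb)$ (Lemma~\ref{lem:Zhatv2}); your plan of term-by-term differentiation is workable in spirit, but the polymer accounting you describe would land on the wrong variance because it is looking at the wrong clusters.
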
     

\begin{rem}
    Since $\frac{\partial}{\partial \gb} \log Z(\gb,\mvgo)=\E_{\gb,\mvgo}-W(\pi)=:\gibbs{-W(\pi)}_\gb$ one might expect that Theorem~\ref{thm intro MMP typical} would follow from Theorem~\ref{thm intro MMP free energy} by a simple differentiation argument. Although the means do match in such a fashion, fluctuations are more subtle. In fact, one of the smaller order error terms that would have been missed in the differentiation heuristic will end up contributing a term involving $\psi''(2\gb)$ inside of the variance. Hence, our proof relies on a careful analysis and cluster expansions for $Z(\gb)$.
\end{rem}

\subsection{Setup and Assumptions} \label{intro setup}

This section introduces the Gibbs measure and normalized partition function of an abstract optimization problem.
Let $G_n=(V_n, E_n)$ be a sequence of graphs and $\cS$ be the family of subgraphs of $G$ over which one wants to optimize such that the following assumption holds.
\begin{ass}\label{as:Graph}
 Assume that $G_n=(V_n,E_n)$ is a sequence of edge-transitive graphs, $\abs{V_n}=c\cdot n$ for some $c\in \bN$, and $\abs{E_n}=\Theta(\abs{V_n}^2).$
\end{ass}

To lighten the notation, we often will omit the subscript $n$ if it is clear from the context and the explicit dependence on $n$ is not essential to highlight.
\begin{ass}\label{as:weights}
    We assume that $\{\go_e\}_{e\in E}$ be i.i.d.~random variables from some \textit{general distribution} such that for all $\gb \in [0,\infty)$ we have
\begin{align}\label{eq:psi}
     \psi(\gb):=\log\E e^{-\gb\go}<\infty.
\end{align}
\end{ass}

Given the set $\cS:=\cS_n$ of subgraphs of $G_n$ over which one optimizes. Let $\wt{\cS}:=\wt{\cS}(n)$ denote a collection of all possible subgraphs of elements in $\cS$. For example, for the MMP, $\cS$ is the set of all perfect matchings, while $\wt\cS$ is the set of all partial matchings. We often think of graphs $\gC\in \wt\cS$ (which we call \textit{clusters}) as subgraphs of $G$ that could be extended to a valid configuration (\ie\ an element of $\cS$) for a given optimization problem.

\begin{ass}\label{as:optimization}  
We assume that
\begin{enumerate}[label=\textnormal{III.\arabic*},ref=III.\arabic*]
    \item\label{as:kappa} The edge sets of all elements of $\cS$ are of the same size $m$ and
        
        \begin{equation}\label{eq:gc}
        \lim_{n\to\infty}\frac{m^2}{2\abs{E}}= \gamma\in (0,\infty).
        \end{equation}
        
    \item\label{as:matching} For any vertex-disjoint set of edges (\ie~a partial matching) $\gC\subseteq E_n$ there is $\pi\in \cS$ such that $\gC\subseteq \pi$. 
    \item\label{as:prod of edges} If $\pi$ is chosen uniformly at random from $\cS$ then there is a constant $\rho\in(0,\infty)$ such that for any $k\le \bN$ and any partial matching $\gC$ with $k$ edges we have that
    \begin{equation*}
    \abs{\frac{\pr_0(\gC\subseteq \pi)}{\prod_{e\in\gC}\pr_0(e\in \pi)}-1}\le \frac{\rho^k}{\sqrt{m}}.
    \end{equation*}
    \item\label{as:non matching}
    If $\pi$ is chosen uniformly at random from $\cS$ then there is a constant $\rho\in(0,\infty)$ such that for any $k\in \bN$ and any $\gC\in \wt\cS$ with $k$ edges that is not a partial matching we have that
    $$
    \pr_0(\gC\subseteq \pi)\le \rho^k\prod_{e\in\gC}\pr_0(e\in \pi).
    $$
\end{enumerate}
\end{ass}

Notice that due to the edge transitivity of the graph $G$ for any edge $e\in E$ the probability that it is contained in a uniformly chosen configuration $\pi\in\cS$ is the same and equal to
\begin{equation}\label{eq:p}
    p:=p_n(e):=\pr_0(e\in \pi)=\frac{m}{\abs{E}}.
\end{equation}
This quantity will play an essential role in all of the clusters used in this paper.

Assumption~\ref{as:matching} states that any partial matching can be extended to a graph that belongs to a set $\cS$. Intuitively, it means that $\cS$ is a family of subgraphs that are not necessarily concentrated on one particular part of the graph, as in the latter case, one would expect a wildly different behavior (see Section~\ref{sec:star}).

\begin{rem}
    In general, the assumption of edge-transitivity on the graph (Assumption~\ref{as:Graph}) is not necessary, and we expect that one could carry out the analysis with individual probabilities $p_n(e)=\pr_0 (e\in \pi)$ under certain uniformity assumption. However, for simplicity we decided to focus on the symmetric case.
\end{rem}

\subsection{Notations}\label{intro notations}
Throughout this paper, we adopt the following notations.
\begin{itemize}
    \item $G_n=(V_n,E_n)$ -- a sequence of edge-transitive graphs, $\abs{V}=c\cdot n$ for some $c\in \bN$.
    \item $\cS:=\cS(n)$ -- a collection of spanning subgraphs of $G$ over which we optimize, all of the same size $m$.
    \item $m$ -- the size of the graphs in $\cS$.
    \item $\gamma:=\lim_{n\to\infty} m^2/(2\abs{E})$.
    \item $p:=p_n(e):=\pr_0(e\in \pi)=m/\abs{E}$ -- the probability that an edge $e\in E$ is present in a uniformly chosen configuration $\pi$ from $\cS$.
    \item $\wt{\cS}:=\wt{\cS}(n)$ -- a collection of all possible subgraphs of graphs in $\cS$.
    \item $\cM:=\cM(n)$ -- the collection of perfect matchings of $G$.
    \item $\wt{\cM}:=\wt{\cM}(n)$ -- the collection of partial matchings of $G$.
    \item $\{\go_e\}_{e\in E_n}$ -- the collection of i.i.d.~random variables on edges.
    \item $\xi_e:=\xi_e(\gb):=e^{-\gb\go_e-\psi(\gb)}-1$.
    \item $ v_\gb^2:=\E\xi_e^2=\E e^{-2\gb\go_e-2\psi(\gb)}-1=e^{\psi(2\gb)-2\psi(\gb)}-1$.
    \item $\pr_0(\cdot)$ and $\E_0(\cdot)$ -- the probability and the expectation with respect to a uniformly chosen configuration $\pi$ from $\cS$.
    \item  $Z(\gb):=Z(\gb,\mvgo):=\abs{\cS}^{-1}\cdot\sum_{\pi\in\cS}\exp\left(-\gb W(\pi)\right)$ -- the partition function.
    \item $\wh{Z}(\gb):=\wh{Z}(\gb,\mvgo)=Z(\gb)/\E Z(\gb)$ -- the normalized partition function.
    \item $\gibbs{\cdot}_\gb$ -- the Gibbs average, \ie\ the expectation with respect to Gibbs measure $\pr_{\gb,\mvgo}$ conditioned on the underlying disorder of the weights $\mvgo$.
    \item For a subset of edges $\gC\subseteq E$, we define  $\xi_\gC:=\xi_\gC(\gb):=\prod_{f\in \gC}\xi_f(\gb).$
    \item We will use the big-$O$ and small-$o$ notation as in the usual definition. The symbol $\lesssim$ will also be used instead of the big-$O$ notation.
    \item The notation $X_n = O_{\mathrm{p}}(a_n)$ means that the sequence $X_n/a_n$ is stochastically bounded uniformly in $n$; that is, for any $\eps> 0$, there exists $M_\eps > 0$ such that $\pr(\abs{X_n}/{a_n} > M_\eps) < \eps$ for all sufficiently large $n$. In contrast, $X_n = \op(a_n)$ means that $X_n/a_n$ converges to zero in probability.
    \item By $X_n\Rightarrow Y$ we mean that the random variables $X_n$ converge weakly to $Y$.
\end{itemize}
To lighten the notation, we often omit the subscripts $n$ and $0$ when the context makes it clear, and the explicit dependence on $n$ is not essential to highlight. We will also abuse the notation and write $E$ (or $V$) for both the edge set (or vertex set) as well as the size, \eg\ ``$e\in E$" and ``summing over all clusters of the size $k\le E$."

\subsection{Heuristic}\label{intro heuristic}
We consider the Gibbs measure on the set $\cS$ with the inverse temperature parameter $\gb\ge 0$ and the weight of a configuration $\pi\in \cS$ 
\[
W(\pi):=\sum_{e\in\pi} \go_e = \sum_{e\in E}\go_e\1_{e\in\pi},
\]
defined by
\[
\pr_{\gb,\mvgo}(\pi)=\frac{1}{ Z(\gb,\mvgo)}\cdot \frac{1}{\abs{\cS}}\exp\left(-\gb W(\pi)\right),\quad \pi\in\cS.
\]
The  partition function $Z(\gb)$ is defined as
\begin{equation}\label{eq:Z}
    Z(\gb):=Z(\gb,\mvgo):=\frac{1}{\abs{\cS}}\sum_{\pi\in\cS}\exp\left(-\gb W(\pi)\right)
\end{equation}
and normalized partition function $\wh{Z}(\gb):=Z(\gb)/\E Z(\gb)$. Note that 
$
\E Z(\gb)=e^{m\psi(\gb)}
$
and recall that $\pr_0$ and $\E_0$ denote the probability and the expectation with respect to a uniformly chosen configuration $\pi$ from $\cS$. 

Now one can see that 
\begin{align}\label{eq:whZ}
    \wh{Z}(\gb)&=\E_0 \prod_{e\in E}e^{(-\gb\go_e-\psi(\gb))\1_{e\in\pi}}=\E_0 \prod_{e\in E}\left(1+\left(e^{-\gb\go_e-\psi(\gb)}-1\right)\1_{e\in\pi}\right).
\end{align}

Define the mean zero random variables for each $e\in E$,
\begin{equation}\label{eq:xi}
\xi_e=\xi_e(\gb):=e^{-\gb\go_e-\psi(\gb)}-1,
\end{equation}
then~\eqref{eq:whZ} can be rewritten as
\begin{equation}\label{eq:whZ2}
    \wh{Z}(\gb)=\E_0 \prod_{e\in E} (1+\xi_e\1_{e\in\pi}).
\end{equation}
We can formally expand the product as sums over all subgraphs $\gC\subseteq E$ and get
\begin{align}\label{eq:pse}
    \wh{Z}(\gb)= \sum_{\gC\subseteq E} \pr_0(\gC\subseteq \pi)\cdot \prod_{e\in \gC}\xi_e.
\end{align}
Moreover, the form of $\wh{Z}(\gb)$ in~\eqref{eq:whZ2} allows us to conveniently rewrite its second moment in terms of the size of expected intersection of two independent samples $\pi,\pi'$ from $\pr_0$, below we use slight abuse of notation to denote the expectation with respect to both $\pi$ and $\pi'$ by $\E_0$.
\begin{align}\label{eq:whZ^2}
    \E\wh{Z}^2(\gb)&=\E\E_0 \prod_{e\in E} (1+\xi_e\1_{e\in\pi}+\xi_e\1_{e\in\pi'}+\xi_e^2\1_{e\in\pi\cap\pi'})\notag\\
    &=\E_0 \prod_{e\in E}(1+\E\xi_e^2\cdot \1_{e\in\pi\cap\pi'})
    =\E_0 \prod_{e\in E}(1+v_\gb^2\cdot \1_{e\in\pi\cap\pi'}),
\end{align}
where we use that $\{\xi_e\}_{e\in E}$ are mean zero random variables with variance $v_\gb^2$. We can recombine terms to get
\[
\E\wh{Z}^2(\gb)=\E_{0} e^{\log(1+v_\gb^2)\ \abs{\pi\cap\pi'}}.
\]
Under our assumptions, one can show that $\abs{\pi\cap\pi'}$ is asymptotically a Poisson distributed random variable with mean $|E|\cdot p^2=m^2/|E|\approx 2\gc$, see Remark~\ref{indepintersections}. Hence, we expect 
$
\E\wh{Z}^2(\gb)\approx e^{2\gc v_\gb^2}<\infty.
$
This allows us to use the expansion~\eqref{eq:pse} of $\wh{Z}$ as an $L^2$-convergent power series in terms of the size of $\gC$ and enable the cluster expansion technique. 

\subsection{Cluster expansion}\label{intro cluster}
Cluster expansion originated from the work of Mayer and Montroll in 1941, which related to expansions for a dilute gas, and played an important role in the mathematical work of statistical mechanics, condensed matter physics, and quantum field theory. The methods have been successfully used in lattice models, including the Ising model~\cite{ScolaClust}, random cluster model \cite{HelmuthClust}, and self-avoiding random walk \cite{KhaninClust}, among others. For disordered models, cluster expansion has been employed in random polymer and spin-glass models to understand the limiting behavior of the log-partition function (see, e.g., \cite{AizenmanClust,BanerjeeClust,ParthaQiang,AlbertsKhaninQuastel} among others). More recently, this technique was applied in combinatorics to calculate the number of certain objects and to gain insight into their structural properties (\cite{JenssenClust,JenssenClustTorus} among others). We refer the interested reader to~\cite{fvbook, FarisCluster} for more details about cluster expansion techniques in statistical mechanics and connections to enumerative combinatorics. 

Cluster expansion is a formal power series expansion of the partition function in terms of a sum over finitely many interactions. We present two classical versions of such expansion. 
For many statistical physics models, including the models considered in this paper, the energy corresponding to a configuration $\pi\in \cS$ can be written as a linear combination of simpler $\{0,1\}$--valued functions of $\pi$, \ie\
\[
W(\pi)=\sum_{e\in E} a_{e} f_{e}(\pi).
\]
Thus, we get
\begin{align*}
\exp(-\gb W(\pi)) =\prod_{e\in E} \exp( -\gb a_{e} f_{e}(\pi)) 
&= \prod_{e\in E} \left(1+(e^{-\gb a_e}-1)\cdot f_{e}(\pi)\right) \\
&=\sum_{\gC \subseteq E} \prod_{e\in \gC} (e^{-\gb a_e}-1)\cdot \prod_{e\in \gC}  f_{e}(\pi) 
\end{align*}
and for the partition function, we get 
\begin{align}\label{eq:01exp}
    Z(\gb)=\sum_{\gC \subseteq E} \prod_{e\in \gC} (e^{-\gb a_e}-1)\cdot  \frac1{\abs{\cS}}\sum_{\pi\in\cS}\prod_{e\in \gC}  f_{e}(\pi).
\end{align}
These types of decompositions are called \textit{cluster expansions, where subgraphs $\Gamma$ are referred to as \textit{clusters}.}
When $a_e\le 0$ for all $e\in E$, the terms in~\eqref{eq:01exp} are all non-negative, and the high temperature regime corresponds to when the sum converges absolutely. However, for the disordered model, $a_e$'s can take both positive and negative values, and absolute convergence is, most of the time, not the correct way to control the growth of the partition function. 

On the other hand, if in a disordered model $f_{e}(\cdot)\in \{\pm1\}$ and $a_{e}$'s are  independent symmetric random variables, such as spin glass models, one can write
\begin{align*}
\exp(-\gb W(\pi))&=\prod_{e\in E} \exp( -\gb a_{e} f_{e}(\pi)) = \prod_{e\in E} \cosh(\gb a_{e})(1+\tanh(- \gb a_{e})\cdot f_{e}(\pi)) \\
&=\prod_{e\in E} \cosh(\gb a_{e}) \cdot \sum_{\gC\subseteq \E} \prod_{e\in \gC} \tanh(-\gb a_{e})\cdot \prod_{e\in \gC}  f_{e}(\pi).
\end{align*}
Thus, we get the following chaos decomposition in terms of sums of finite degree polynomials of the independent mean zero random variables $(\tanh( \gb a_e))_{e\in E}$,
\begin{align}\label{eq:chaos1}
    \frac{1}{\prod_{e\in E} \cosh(\gb a_{e})}\cdot Z(\gb)=  \sum_{\gC\subseteq E} \prod_{e\in \gC} \tanh(-\gb a_{e}) \cdot  \sum_{\pi\in \cS}\prod_{e\in \gC}  f_{e}(\pi).
\end{align}

For random optimization problems, we have $f_{e}(\pi)=\ind(e\in\pi)\in \{0,1\}$ and $a_e$'s are independent random variables that follow a general distribution. Thus, neither of the expansions outlined above is applicable to study the asymptotic behavior of the partition function. There are a few examples in the literature on cluster expansion that truly lie outside of the scope of these two approaches. For example, the directed polymer in dimensions $(1+1)$ and $(2+1)$ in the intermediate disorder regime~\cite{AlbertsKhaninQuastel,CaravennaSunZygouras,DZ16}. However, in such cases, the authors considered the case where $\beta_n\to 0$ sufficiently fast, which again does not apply to our setting. 

A technical novelty of the present paper is the extension of both the approaches from above, which enables us to derive the expansion~\eqref{eq:whZ}. To our knowledge, our expansion is the first chaos--type expansion for disordered combinatorial problems under the general setup. We would like to highlight that the key assumption that enables our analysis is that $\abs{\pi}=m$ for all $\pi\in\cS$ (see Assumption~\ref{as:kappa}).


\section{Main results}\label{sec mainres}
In this section we state main results in the general setting introduced in Section~\ref{intro setup}.
\begin{lem}\label{lem:Zhat}
Under Assumptions~\ref{as:Graph}--\ref{as:optimization} for any $\gb\in [0,\infty)$
\[
\E \abs{\prod_{e\in E}\left(1+p\xi_e(\gb)\right)-\wh{Z}(\gb)}^2\lesssim \frac{1}{m}.
\]
\end{lem}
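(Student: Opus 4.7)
The plan is to expand both quantities as series indexed by edge subsets and exploit the orthogonality inherited from the i.i.d.~mean-zero variables $\{\xi_e\}_{e\in E}$. From~\eqref{eq:pse} we already have $\wh{Z}(\gb)=\sum_{\gC\subseteq E}\pr_\pi(\gC\subseteq\pi)\,\xi_\gC$, and expanding the independent product in the same way gives $\prod_{e\in E}(1+p\xi_e)=\sum_{\gC\subseteq E}p^{|\gC|}\,\xi_\gC$. Because $\E[\xi_\gC\xi_{\gC'}]=v_\gb^{2|\gC|}\1_{\gC=\gC'}$, the cross terms vanish and
\[
\E\abs{\prod_{e\in E}(1+p\xi_e(\gb))-\wh{Z}(\gb)}^2=\sum_{\gC\subseteq E}\bigl(p^{|\gC|}-\pr_\pi(\gC\subseteq\pi)\bigr)^2 v_\gb^{2|\gC|}.
\]
I would then split this diagonal sum according to whether $\gC$ is a partial matching.

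For partial matchings, Assumption~\ref{as:prod of edges} immediately yields $\bigl|p^{|\gC|}-\pr_\pi(\gC\subseteq\pi)\bigr|\le p^{|\gC|}\rho^{|\gC|}/\sqrt{m}$, so using the crude upper bound $\binom{|E|}{k}$ for the number of size-$k$ partial matchings produces a contribution of at most
\[
\frac{1}{m}\sum_{k\ge 0}\binom{|E|}{k}(p^2\rho^2 v_\gb^2)^k\le\frac{1}{m}\bigl(1+p^2\rho^2 v_\gb^2\bigr)^{|E|}.
\]
Since $|E|p^2=m^2/|E|\to 2\gc$ by~\eqref{eq:gc}, the rightmost factor stays uniformly bounded, so this piece is $O(1/m)$ as desired.

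The main obstacle will be the non-partial-matching contribution, because Assumption~\ref{as:non matching} alone only furnishes $(p^{|\gC|}-\pr_\pi(\gC\subseteq\pi))^2\le 4\rho^{2|\gC|}p^{2|\gC|}$ (taking $\rho\ge 1$ without loss of generality, and observing the bound is vacuous when $\gC\notin\wt{\cS}$ since then $\pr_\pi(\gC\subseteq\pi)=0$), with no explicit $1/m$ factor. The required gain must therefore come from the scarcity of non-matchings. I plan to use a ``cherry counting'' argument: every non-matching $\gC$ of size $k\ge 2$ contains at least one pair of edges sharing a vertex, so a union bound over such pairs gives $N_k\lesssim(\text{\# cherries})\cdot\binom{|E|-2}{k-2}$. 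By the edge-transitivity and $|E|=\Theta(|V|^2)$ imposed in Assumption~\ref{as:Graph}, the graph is (bi)regular of degree $\Theta(|V|)$ and thus has $\Theta(|V|^3)$ cherries. Combining this with $p=\Theta(1/|V|)$ and $|E|p^2=\Theta(1)$, I expect to obtain $N_k p^{2k}\lesssim |V|^{-1}(2\gc)^{k-2}/(k-2)!$; summing against $\rho^{2k}v_\gb^{2k}$ yields a convergent series of total size $O(|V|^{-1})=O(1/m)$, using $m=\Theta(|V|)$. Adding the two contributions then completes the estimate.
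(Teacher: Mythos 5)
Your proposal is correct and follows essentially the same route as the paper: it uses the orthogonality of the $\xi_\gC$ to reduce the second moment to a diagonal sum over subsets, invokes Assumption~\ref{as:prod of edges} on partial matchings, and uses Assumption~\ref{as:non matching} together with a combinatorial count of non-matching edge sets (your ``cherry'' bound $N_k\lesssim|V|^3\binom{|E|-2}{k-2}$ is equivalent to the paper's $\binom{|E|}{k-1}\cdot 2(k-1)|V|$ bound, both giving a $1/m$ gain from the scarcity of non-matchings). The only cosmetic difference is that the paper splits the sum into three pieces ($\gC\notin\wt\cS$, $\gC\in\wt\cM$, $\gC\in\wt\cS\setminus\wt\cM$) while you merge the first and third into a single non-matching term, using $\pr_\pi(\gC\subseteq\pi)=0$ when $\gC\notin\wt\cS$.
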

This approximation yields a CLT for the log-partition function.
\begin{thm}[CLT for the log-partition function]\label{thm:log part clt}
Under Assumptions~\ref{as:Graph}--\ref{as:optimization}, for any $\gb\in [0,\infty)$, we have
\begin{equation}\label{eq:CLT for wh{Z}}
    \log Z(\gb)-m\psi(\gb)\Rightarrow \N\left(-\gc v_\gb^2,2\gc  v_\gb^2\right) \text{ as } n\to\infty,
\end{equation}
where $\gc \in(0,\infty)$ is given by~\eqref{eq:gc}.
\end{thm}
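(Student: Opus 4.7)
Since $\E Z(\gb) = e^{m\psi(\gb)}$, one has $\log Z(\gb) - m\psi(\gb) = \log \wh{Z}(\gb)$, so the target reduces to showing $\log \wh{Z}(\gb) \Rightarrow \N(-\gc v_\gb^2,\, 2\gc v_\gb^2)$. The plan is to compare $\wh{Z}(\gb)$ with the product
\[
P_n := \prod_{e \in E}(1 + p\,\xi_e(\gb)),
\]
whose logarithm is a sum of i.i.d.\ contributions amenable to the classical Lyapunov CLT, and then transfer the limit to $\log \wh{Z}(\gb)$ via Lemma~\ref{lem:Zhat}.

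For the CLT on $\log P_n = \sum_{e\in E} Y_e$ with $Y_e := \log(1+p\xi_e)$ (well-defined since $\xi_e \ge -1$ and $p \in (0,1)$), I would use the pointwise bound $|\log(1+x) - x + x^2/2| \le C\,x^2(|x|\wedge 1)$, valid for $x \ge -1/2$, to compute
\[
\E Y_e = -\tfrac{1}{2}p^2 v_\gb^2 + o(p^2), \qquad \var Y_e = p^2 v_\gb^2 + o(p^2),
\]
where the remainder is controlled by splitting over $\{p|\xi_e|\le 1\}$ and its complement and invoking the finite higher moments of $\xi_e$ (which follow from $\psi(k\gb)<\infty$ for all $k$, guaranteed by Assumption~\ref{as:weights}). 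Since $|E|p^2 = m^2/|E| \to 2\gc$ by Assumption~\ref{as:kappa}, one concludes $\E \log P_n \to -\gc v_\gb^2$ and $\var \log P_n \to 2\gc v_\gb^2$. The Lyapunov condition follows from the sign-separated bound $|Y_e| \le p|\xi_e|\,\mathbf{1}_{\xi_e\ge 0} + |\log(1-p)|\,\mathbf{1}_{\xi_e<0}$, giving $\E|Y_e|^3 = O(p^3)$ and hence Lyapunov ratio $|E|\,\E|Y_e|^3/\gs_n^{3} = O(|E|p^3) = O(1/n) \to 0$, which yields $\log P_n \Rightarrow \N(-\gc v_\gb^2,\, 2\gc v_\gb^2)$.

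To transfer this to $\log\wh{Z}(\gb)$, Lemma~\ref{lem:Zhat} gives $\E(P_n - \wh{Z}(\gb))^2 = O(1/m) \to 0$, so $\wh{Z}(\gb) - P_n \to 0$ in probability. Since the weak limit of $P_n$ is strictly positive (an exponential of a Gaussian), both $P_n$ and $\wh{Z}(\gb)$ are bounded below by a positive constant with probability tending to one; on that event $|\log \wh{Z}(\gb) - \log P_n| \lesssim |\wh{Z}(\gb) - P_n| = o_p(1)$, and Slutsky's lemma finishes the proof.

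I expect the main technical obstacle to be the bookkeeping of the Taylor remainder in the mean and variance computations: because $\xi_e$ is bounded below by $-1$ but unbounded above, the naive power series for $\log(1+p\xi_e)$ does not converge pointwise, and one has to argue term-by-term on $\{p|\xi_e|\le 1\}$ vs.\ its complement while being sure the $|E|$-scale summation does not inflate any error into a non-vanishing contribution. A secondary subtlety is that Lemma~\ref{lem:Zhat} only yields $L^2$-closeness of $\wh{Z}(\gb)$ to $P_n$, so one cannot directly invert a logarithm and must separately verify positivity of $\wh{Z}(\gb)$ with high probability, which is why passing through the weak limit of $P_n$ in the transfer step is crucial.
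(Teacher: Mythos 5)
Your argument is correct and essentially mirrors the paper's proof: both reduce via Lemma~\ref{lem:Zhat} to a CLT for $\sum_{e\in E}\log(1+p\,\xi_e)$, with the paper doing this by Taylor-expanding into a CLT-scale linear term $p\sum_e\xi_e$ and an LLN-scale quadratic term $-\tfrac12 p^2\sum_e\xi_e^2$, while you apply Lyapunov directly to $Y_e=\log(1+p\,\xi_e)$ after computing its mean, variance, and a third-moment bound --- the same remainder bookkeeping either way. Your explicit justification of the passage from $L^2$-closeness of $\wh{Z}(\gb)$ and $P_n$ to closeness of their logarithms (using positivity of the log-normal weak limit of $P_n$ to get a high-probability lower bound) is a worthwhile detail: the paper invokes Lemma~\ref{lem:Zhat} and writes $\log\wh{Z}(\gb)=\sum_e\log(1+p\,\xi_e)+\op(1)$ in a single step, leaving this inversion implicit.
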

An immediate corollary of Theorem~\ref{thm:log part clt} is the law of large numbers for the log-partition function. 
\begin{cor}\label{thm:Limiting free energy}
    Under Assumptions~\ref{as:Graph}--\ref{as:optimization}, for any $\gb\in [0,\infty)$ we have
    \[
    \frac{1}{m}\log Z(\gb,\mvgo)\to\psi(\gb) \quad\text{ and }\quad  \frac{1}{m}\gibbs{W(\pi)}_\gb\to -\psi'(\gb)
    \]
    in probability.
\end{cor}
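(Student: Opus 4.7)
The first claim follows immediately from Theorem~\ref{thm:log part clt}. Since $\log Z(\gb)-m\psi(\gb)$ converges in distribution to a (non-degenerate but fixed) Gaussian random variable, the sequence is tight, and in particular $\tfrac{1}{m}(\log Z(\gb)-m\psi(\gb))\to 0$ in probability as $n\to\infty$ (because $m\to\infty$ under Assumption~\ref{as:Graph} together with~\eqref{eq:gc}). Rearranging gives $\tfrac{1}{m}\log Z(\gb,\mvgo)\to\psi(\gb)$ in probability, which is the first assertion.

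For the second assertion, I would use convexity of $\gb\mapsto\log Z(\gb,\mvgo)$ together with the identity
\[
\gibbs{W(\pi)}_\gb=-\frac{\partial}{\partial \gb}\log Z(\gb,\mvgo),
\]
which follows by direct differentiation of~\eqref{eq:Z}. The map $\gb\mapsto \log Z(\gb,\mvgo)$ is convex since its second derivative equals $\var_\gb(W(\pi))\ge 0$; hence $f_n(\gb):=\tfrac1m\log Z(\gb,\mvgo)$ is a sequence of random convex functions. By the first part (applied at each $\gb\in (0,\infty)$), $f_n(\gb)\to \psi(\gb)$ in probability. One can also verify, by the same argument applied at $\gb\pm \eps$, that $f_n$ converges in probability at every fixed point.

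I would then invoke the standard deterministic fact from convex analysis (see e.g.\ Rockafellar): if convex functions $f_n$ on an open interval converge pointwise to a function $f$ that is differentiable at $\gb$, then $f_n'(\gb)\to f'(\gb)$. To transfer this to convergence in probability, I would use the usual subsequence argument: from any subsequence of $\{f_n\}$, extract a further subsequence along which $f_n(\gb_k)\to\psi(\gb_k)$ almost surely on a countable dense set $\{\gb_k\}\subset(0,\infty)$; on the resulting probability-one event, the deterministic convex result gives $f_n'(\gb)\to\psi'(\gb)$ at every point where $\psi$ is differentiable (which is everywhere on $(0,\infty)$, as $\psi$ is smooth under Assumption~\ref{as:weights}). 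Since every subsequence admits such a further a.s.-convergent sub-subsequence, we obtain $f_n'(\gb)\to\psi'(\gb)$ in probability, i.e.\
\[
\tfrac1m\gibbs{W(\pi)}_\gb=-f_n'(\gb)\longrightarrow -\psi'(\gb)\qquad\text{in probability.}
\]

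Neither step presents a serious obstacle. The only mildly non-routine point is justifying passage to derivatives along a merely in-probability convergent (rather than a.s.\ convergent) sequence of convex functions; the subsequence principle handles this cleanly, provided we check differentiability of $\psi$ on $(0,\infty)$, which is immediate from dominated convergence given Assumption~\ref{as:weights}.
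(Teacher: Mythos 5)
Your proposal is correct and follows essentially the same approach as the paper: rewrite $\tfrac1m\log Z(\gb,\mvgo)=\psi(\gb)+\tfrac1m\log\wh Z(\gb)$ and use the CLT for the second term, then appeal to convexity of $\gb\mapsto\log Z(\gb,\mvgo)$ together with $\partial_\gb\log Z=-\gibbs{W(\pi)}_\gb$. The only difference is that the paper leaves the convex-analysis step (pointwise convergence of convex functions to a differentiable limit implies convergence of derivatives, transferred to convergence in probability via the subsequence principle) implicit, whereas you spell it out.
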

\begin{proof} By definition 
    $$
    \frac{1}{m}\log Z(\gb,\mvgo)=\psi(\gb)+\frac{1}{m}\log\wh{Z}(\gb,\go).
    $$
    By Theorem~\ref{thm:log part clt}, CLT holds for $\log\wh{Z}(\gb,\go)$. Thus, $m^{-1}\log\wh{Z}(\gb,\go)$ is negligible in probability.
    Note that 
    \begin{align*}
        \frac{\partial}{\partial \gb} \log Z_n(\gb,\mvgo)&=\gibbs{-W(\pi)}_\gb\qquad \textnormal{and}\qquad
        \frac{\partial^2}{\partial \gb^2} \log Z_n(\gb,\mvgo)=\left(\gibbs{W(\pi)}^2_\gb-\gibbs{W(\pi)}_\gb\right)^2.
    \end{align*}
    In particular, the log-partition function is convex. Thus, the convergence of $\frac{1}{m}\log Z(\gb,\mvgo)$ to $\psi(\gb)$ implies the convergence of the first derivative to that of the limit in $\beta$. 
\end{proof}

\begin{thm}[Size of the intersection of two Gibbs samples]\label{thm:poi limit}
    Under Assumptions~\ref{as:Graph}--\ref{as:optimization}, for any $\gb\in [0,\infty)$, let $\gc$ be defined as in~\eqref{eq:gc}, and $\pi$ and $\pi'$ be two independent samples from the Gibbs measure $\pr_{\gb,\mvgo}$ then the number of common edges between $\pi$ and $\pi'$, denoted by $\abs{\pi\cap\pi'}$, converges to $\poi\left(2\gc\cdot e^{\psi(2\gb)-2\psi(\gb)}\right)$ in probability.
\end{thm}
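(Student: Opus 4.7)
The plan is to establish the quenched Poisson convergence via the method of factorial moments. Setting $N:=\abs{\pi\cap\pi'}$ and $\gl:=2\gc\, e^{\psi(2\gb)-2\psi(\gb)}=2\gc(1+v_\gb^2)$, it suffices to prove that for each fixed $k\ge 0$ the quenched $k$-th factorial moment $\phi_k(\mvgo):=\gibbs{(N)_k}_\gb$ satisfies $\phi_k(\mvgo)\probc \gl^k$. Convergence of all factorial moments in probability, combined with a standard truncation argument using the super-exponential tails of $\poi(\gl)$, then implies that the conditional distribution of $N$ given $\mvgo$ converges to $\poi(\gl)$ in probability.

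The two replicas are conditionally independent given $\mvgo$ under $\pr_{\gb,\mvgo}^{\otimes 2}$, so
\[
\phi_k(\mvgo)=k!\sum_{\gC\subseteq E,\,\abs{\gC}=k}\mu_{\mvgo}(\gC)^2,\qquad \mu_{\mvgo}(\gC):=\pr_{\gb,\mvgo}(\gC\subseteq\pi).
\]
Writing $Z(\gb)=e^{m\psi(\gb)}\wh Z(\gb)$ and $e^{-\gb W(\pi)}=e^{m\psi(\gb)}\prod_e(1+\xi_e\1_{e\in\pi})$ gives
\[
\mu_{\mvgo}(\gC)=\pr_\pi(\gC\subseteq\pi)\cdot\frac{\wh Z_\gC(\gb)}{\wh Z(\gb)},\qquad \wh Z_\gC(\gb):=\E_\pi\Bigl[\prod_{e\in E}(1+\xi_e\1_{e\in\pi})\,\Big|\,\gC\subseteq\pi\Bigr].
\]
The contribution to $\phi_k$ from $\gC$ that are not partial matchings is controlled by Assumption~\ref{as:non matching}: there are only $o(\abs{E}^k)$ non-matching $k$-subsets (each must contain an adjacent edge pair, and such pairs number only $O(\abs{E}^{3/2})$), and $\pr_\pi(\gC\subseteq\pi)\le \rho^k p^k$ for each. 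A second-moment bound, together with $\E[\wh Z_\gC(\gb)^2]=O(1)$ (established by the same orthogonality argument as Lemma~\ref{lem:Zhat}), shows this contribution is $o_\bP(1)$.

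For a partial matching $\gC$ with $\abs{\gC}=k$, the conditional product splits as
\[
\wh Z_\gC(\gb)=\prod_{e\in\gC}(1+\xi_e)\cdot\E_\pi\Bigl[\prod_{e\notin\gC}(1+\xi_e\1_{e\in\pi})\,\Big|\,\gC\subseteq\pi\Bigr].
\]
The technical heart of the argument is an analog of Lemma~\ref{lem:Zhat} for this conditional partition function, namely
\[
\E\Bigl|\E_\pi\bigl[\textstyle\prod_{e\notin\gC}(1+\xi_e\1_{e\in\pi})\,\big|\,\gC\subseteq\pi\bigr]-\prod_{e\notin\gC}(1+p\xi_e)\Bigr|^2=O(1/m)
\]
uniformly in $\gC$; the proof mirrors Lemma~\ref{lem:Zhat} but with edge-marginals $\pr_\pi(\gC'\subseteq\pi\mid\gC\subseteq\pi)$ estimated via Assumption~\ref{as:prod of edges}. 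Combined with Lemma~\ref{lem:Zhat} itself and the observation that $\prod_{e\in\gC}(1+p\xi_e)=1+o_\bP(1)$ (for fixed $k$ and $p=O(1/n)$), we obtain
\[
\mu_{\mvgo}(\gC)=p^k\prod_{e\in\gC}(1+\xi_e)\bigl(1+o_\bP(1)\bigr).
\]

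Setting $X_\gC:=\prod_{e\in\gC}(1+\xi_e)^2$ and $T_k(\mvgo):=\sum_{\gC\text{ matching},\,\abs{\gC}=k}X_\gC$, the problem reduces to showing $k!\,p^{2k}T_k(\mvgo)\probc\gl^k$. Since $\{(1+\xi_e)^2\}_{e\in E}$ are i.i.d.~with common mean $1+v_\gb^2=e^{\psi(2\gb)-2\psi(\gb)}$ and the number of size-$k$ matchings is $\sim\abs{E}^k/k!$, we get $k!\,p^{2k}\,\E T_k\to (m^2/\abs{E})^k(1+v_\gb^2)^k=\gl^k$. A covariance computation, counting pairs $(\gC,\gC')$ sharing $j\ge 1$ edges as $O(\abs{E}^{2k-j})$ in number with $O(1)$ covariance each, yields $\mathrm{Var}[k!\,p^{2k}T_k]=O(1/\abs{E})\to 0$, so Chebyshev closes the argument. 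The principal obstacle is the conditional cluster expansion displayed above: its proof parallels Lemma~\ref{lem:Zhat}, but one must carefully propagate Assumption~\ref{as:prod of edges} through the conditioning to handle the conditional single- and multi-edge occupancy probabilities uniformly over $\gC$.
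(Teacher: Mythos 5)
Your factorial-moment route is a genuinely different packaging than the paper's, which works directly with the quenched moment-generating function $\gibbs{e^{\theta\abs{\pi\cap\pi'}}}_\gb$ and a single three-component cluster decomposition (into $\gC_1$ in $\pi$ only, $\gC_2$ in $\pi'$ only, $\gC_3$ in both) to show at once that this MGF converges in probability to $\exp(2\gc(e^\theta-1)e^{\psi(2\gb)-2\psi(\gb)})$. Your reduction to $\phi_k(\mvgo)=k!\sum_\gC\mu_{\mvgo}(\gC)^2\probc\gl^k$, the identification $\mu_{\mvgo}(\gC)=\pr_\pi(\gC\subseteq\pi)\wh Z_\gC(\gb)/\wh Z(\gb)$, the counting of non-matching $k$-sets as $O(\abs{E}^{k-1/2})$, and the final Chebyshev computation for $T_k$ are all correct; you also rightly identify the conditional analog of Lemma~\ref{lem:Zhat} as the technical heart. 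The two approaches do require essentially the same cluster-expansion machinery, but yours must carry the conditioning on $\gC\subseteq\pi$ through the whole argument separately for each $k$, whereas the paper's single expansion of the joint object $\E_{\pi,\pi'}\exp(\theta\abs{\pi\cap\pi'}+\tr\langle\mvgo,\pi+\pi'\rangle)$ handles all moments simultaneously; what your route buys is a perhaps more transparent combinatorial meaning for each moment.

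Two places where you should be more careful rather than just more detailed. First, the step $\mu_{\mvgo}(\gC)=p^k\prod_{e\in\gC}(1+\xi_e)(1+o_\bP(1))$ cannot be summed naively over the $\sim\abs{E}^k$ matchings, because the $o_\bP(1)$ error is random and $\gC$-dependent; the safer maneuver (which makes your plan close) is to clear the denominator, writing $\wh Z^2\phi_k=k!\sum_\gC\pr_\pi(\gC\subseteq\pi)^2\wh Z_\gC^2$, establishing $k!\sum_\gC\pr_\pi(\gC\subseteq\pi)^2\wh Z_\gC^2-\gl^k\wh Z^2\probc 0$ via $L^1$/Cauchy--Schwarz bounds in the style of Lemma~\ref{lem:Zhat}, and using that $\wh Z$ is tight and bounded away from zero in probability. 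Second, in the conditional cluster expansion the roughly $km=O(m)$ edges adjacent to $\gC$ have conditional occupancy probabilities that are only $O(p)$ rather than $p(1+O(\rho^k/\sqrt m))$ (Assumption~\ref{as:prod of edges} does not apply since $\gC\cup\{e\}$ is not a matching; one must invoke Assumption~\ref{as:non matching} instead), so these edges need to be split off and bounded separately; they do end up contributing $O(1/\sqrt m)$ in $L^2$, but the estimate is not a verbatim repetition of Lemma~\ref{lem:Zhat}.
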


\begin{rem}\label{rem:repsym}
In particular, Theorem~\ref{thm:poi limit} implies that this model is replica symmetric in a strong sense, as the overlap, defined as $\frac{1}{n}\abs{\pi_1\cap\pi_2}$, is $O_{\mathrm{p}}(1/n)$. 
\end{rem}

\begin{rem}[Size of the intersection of three Gibbs samples]\label{rem: 3replica}
     In fact, one can show that under Assumptions~\ref{as:Graph}--\ref{as:optimization} for any $\gb\in [0,\infty)$ if $\pi_1$, $\pi_2$ and $\pi_3$ are three independent samples from the Gibbs measure $\pr_{\gb,\mvgo}$ then the number of common edges between them is zero with high probability. One way of proving this is to argue by the first-moment method. Indeed, using a similar, but more involved, cluster expansion to the one presented in the proof of Theorem~\ref{thm:poi limit} one can show that for any edge $e\in E$
     \[
     \pr_{\gb,\mvgo}(e\in \pi_1)=p\cdot \frac{1+\xi_e(\gb)}{1+p\cdot \xi_e(\gb)}(1+\op(1))\approx \frac{1}{n}.
     \]
\end{rem}

\begin{thm}[Quenched CLT for a typical configuration]\label{thm:typical CLT}
    Under Assumptions~\ref{as:Graph}--\ref{as:optimization} for $\pi\sim\pr_{\gb,\go}$ and  any $\gb\in [0,\infty)$  we have that
    \[
    \frac{1}{\sqrt{m}}\left(W(\pi)+m\psi'(\gb)\right)\Rightarrow \N\left(0,\psi''(\gb)\right)
    \]
    with high probability in $\mvgo$.
\end{thm}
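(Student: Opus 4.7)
The plan is to compute the conditional Laplace transform of $(W(\pi)+m\psi'(\beta))/\sqrt{m}$ under $\pr_{\beta,\mvgo}$, show it converges in probability to the Laplace transform of $\N(0,\psi''(\beta))$, and then upgrade to the quenched CLT by a standard subsequence/Curtiss-theorem argument.

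For any real $t$ the identity $\E_{\beta,\mvgo}[e^{sW(\pi)}]=Z(\beta-s,\mvgo)/Z(\beta,\mvgo)$ together with the factorization $Z(\cdot)=e^{m\psi(\cdot)}\wh{Z}(\cdot)$ yields, after substituting $s=t/\sqrt{m}$,
\[
\E_{\beta,\mvgo}\!\left[e^{t(W(\pi)+m\psi'(\beta))/\sqrt{m}}\right] = \exp\!\left(t\sqrt{m}\psi'(\beta)+m[\psi(\beta-t/\sqrt{m})-\psi(\beta)]\right)\cdot \frac{\wh{Z}(\beta-t/\sqrt{m})}{\wh{Z}(\beta)}.
\]
A second order Taylor expansion of $\psi$ around $\beta$ shows that the deterministic prefactor equals $\exp(t^2\psi''(\beta)/2+o(1))$, which is precisely the target Gaussian Laplace transform. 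So the theorem reduces to showing $\wh{Z}(\beta-t/\sqrt{m})/\wh{Z}(\beta)\to 1$ in probability.

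By Theorem~\ref{thm:log part clt}, $\wh{Z}(\beta)$ is tight and stays bounded away from $0$ in probability, so it suffices to show $\wh{Z}(\beta-t/\sqrt{m})-\wh{Z}(\beta)\to 0$ in $L^2$. Using Lemma~\ref{lem:Zhat}, this reduces (up to an $L^2$ error of order $m^{-1/2}$) to bounding $P(\tilde\beta)-P(\beta)$, where $P(\beta'):=\prod_{e\in E}(1+p\xi_e(\beta'))$ and $\tilde\beta:=\beta-t/\sqrt{m}$. Expanding each product over subsets $\Gamma\subseteq E$ and using that $\E\xi_e(\beta')=0$ for every real $\beta'$, all cross terms between $\Gamma\neq\Gamma'$ vanish by independence of the edge weights, leaving the diagonal sum
\[
\E|P(\tilde\beta)-P(\beta)|^2 = \sum_{\Gamma\subseteq E}p^{2|\Gamma|}\!\left(v_\beta^{2|\Gamma|}-2c_{\beta,\tilde\beta}^{|\Gamma|}+v_{\tilde\beta}^{2|\Gamma|}\right),\qquad c_{\beta,\tilde\beta}:=\E[\xi_e(\beta)\xi_e(\tilde\beta)]=e^{\psi(\beta+\tilde\beta)-\psi(\beta)-\psi(\tilde\beta)}-1.
\]
A direct differentiation gives the key identity $\partial_s v_{\beta+s}^2|_{s=0}=2\,\partial_s c_{\beta,\beta+s}|_{s=0}$, so the $O(\tilde\beta-\beta)$ part of each bracketed term cancels and only an $O((\tilde\beta-\beta)^2 k^2(v_\beta^2)^{k-2})$ remainder survives, where $k=|\Gamma|$. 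Using $\binom{|E|}{k}p^{2k}\le (|E|p^2)^k/k!\to (2\gc)^k/k!$, the sum over $\Gamma$ converges and we get $\E|P(\tilde\beta)-P(\beta)|^2=O(1/m)$, which is the desired $L^2$ smallness.

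Finally, pointwise convergence in probability of the Laplace transform at every real $t$ in a neighborhood of $0$ yields quenched weak convergence to $\N(0,\psi''(\beta))$: along any subsequence, extract a further subsequence on which the Laplace transform converges almost surely on a countable dense set of $t$, and invoke Curtiss's theorem to conclude almost sure weak convergence along this sub-subsequence; since every subsequence admits such a sub-subsequence, the quenched convergence in probability follows. I expect the main obstacle to be the first-order cancellation above: without the identity $\partial_s v_{\beta+s}^2|_{s=0}=2\,\partial_s c_{\beta,\beta+s}|_{s=0}$, the naive bound on each bracketed term is $O(v_\beta^{2k})$ and $\sum_\Gamma p^{2|\Gamma|}v_\beta^{2|\Gamma|}\to e^{2\gc v_\beta^2}$ does not vanish, so the proof genuinely relies on the explicit exponential form of $\xi_e$.
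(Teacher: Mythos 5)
Your proof is correct and follows the paper's overall strategy: write the conditional MGF of $W(\pi)$ as $Z(\beta-s)/Z(\beta)$, factor out $e^{m\psi}$, Taylor-expand the deterministic prefactor to produce the Gaussian exponent $e^{t^2\psi''(\beta)/2}$, and reduce to showing $\wh{Z}(\beta-t/\sqrt{m})/\wh{Z}(\beta)\to 1$ in probability. The difference lies in how you carry out that last step. The paper proves (Lemma~\ref{lem: prod ratio}) that $\prod_e(1+p\xi_e(\tilde\beta))/(1+p\xi_e(\beta))\to1$ by passing to the logarithm and bounding the sum with $\abs{\log(1+x)-\log(1+y)}\le\abs{x-y}/(1+\min(x,y))$ together with $\norm{\xi_e(\tilde\beta)-\xi_e(\beta)}_2\to0$. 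You instead compute $\E\abs{P(\tilde\beta)-P(\beta)}^2$ directly via the cluster expansion: all off-diagonal terms $\Gamma\neq\Gamma'$ vanish by centering and independence, and on the diagonal the bracket $v_{\tilde\beta}^{2k}-2c_{\beta,\tilde\beta}^{k}+v_\beta^{2k}$ has a vanishing first-order Taylor term because $\partial_s v^2_{\beta+s}\big|_{s=0}=2\,\partial_s c_{\beta,\beta+s}\big|_{s=0}$, giving the required $O(1/m)$ in $L^2$; combined with tightness of $\log\wh{Z}(\beta)$ (Theorem~\ref{thm:log part clt}) this gives the ratio $\to1$. Both routes work, and yours is arguably cleaner since it sidesteps the logarithm entirely and makes the $L^2$ error explicit.

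Two minor remarks. First, you invoke Lemma~\ref{lem:Zhat} at the drifting parameter $\tilde\beta_n=\beta-t/\sqrt{m}$; this is legitimate because the implicit constant in that lemma depends only on $v_\beta^2$ and $\rho$, which are continuous in $\beta$ and hence uniformly bounded on a compact neighborhood of $\beta$, but this should be stated. Second, the closing sentence of your writeup, claiming the cancellation ``genuinely relies on the explicit exponential form of $\xi_e$,'' is misleading: $\partial_s v^2_{\beta+s}\big|_{s=0}=2\,\partial_s c_{\beta,\beta+s}\big|_{s=0}$ is automatic for any smooth one-parameter family of centered variables, since $A(\beta,\beta')=\E[\xi(\beta)\xi(\beta')]$ is symmetric and so $\partial_s A(\beta+s,\beta+s)\big|_0=2\,\partial_2 A(\beta,\beta)=2\,\partial_s A(\beta,\beta+s)\big|_0$. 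Your computation is correct; only the interpretation overstates the role of the exponential form.
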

\begin{cor}[Quenched LLN for a typical configuration]\label{thm:typical LLN}
    Under Assumptions~\ref{as:Graph}--\ref{as:optimization} for $\pi\sim\pr_{\gb,\go}$ and  any $\gb\in [0,\infty)$  we have that
    \[
    \frac{1}{m}W(\pi)\to -\psi'(\gb),
    \]
    in probability in $\pr_{\gb,\go}$ with high probability in $\mvgo$. 
\end{cor}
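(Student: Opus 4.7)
The plan is to deduce this directly from Theorem~\ref{thm:typical CLT}, which provides the stronger fluctuation statement that
\[
\frac{1}{\sqrt{m}}\bigl(W(\pi)+m\psi'(\gb)\bigr)\Rightarrow \N\bigl(0,\psi''(\gb)\bigr).
\]
Weak convergence to a proper (tight) limit law implies that the sequence on the left-hand side is bounded in probability, \ie~$O_{\mathrm{p}}(1)$, under the joint law $\pr\otimes \pr_{\gb,\mvgo}$.

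First I would write
\[
\frac{1}{m}W(\pi)+\psi'(\gb)=\frac{1}{\sqrt{m}}\cdot\frac{1}{\sqrt{m}}\bigl(W(\pi)+m\psi'(\gb)\bigr),
\]
and then invoke Slutsky's lemma (or simply the elementary fact that $O_{\mathrm{p}}(1)\cdot o(1)=\op(1)$): since the second factor converges in distribution to a finite Gaussian and the prefactor $1/\sqrt{m}$ is deterministic and tends to zero, the product converges to $0$ in probability. Rearranging gives $\tfrac{1}{m}W(\pi)\to -\psi'(\gb)$ in probability, as desired.

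There is no serious obstacle here: all the work has already been done in establishing the CLT of Theorem~\ref{thm:typical CLT}, and the law of large numbers is a trivial consequence of a tight $\sqrt{m}$-scale fluctuation bound. The only minor point worth noting is that the mode of convergence matches: the CLT holds for the joint distribution of $(\mvgo,\pi)$ under $\pr\otimes\pr_{\gb,\mvgo}$, and this is precisely the notion of convergence in probability meant in the statement of the corollary (in particular covering the quenched assertion in the sense that, after averaging over the disorder $\mvgo$, the probability of a large deviation under the Gibbs measure is itself small).
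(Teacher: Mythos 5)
Your argument is correct and is exactly the standard deduction the paper has in mind: the paper states the corollary immediately after Theorem~\ref{thm:typical CLT} with no separate proof, precisely because the $O_{\mathrm{p}}(\sqrt{m})$ fluctuation bound from the CLT makes the LLN immediate upon dividing by $m$. Nothing further is required.
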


Notice that the limits in Theorem~\ref{thm:typical CLT} and Corollary~\ref{thm:typical LLN} depend only on the weights $\mvgo$ and not on the underlying graph as long as Assumption~\ref{as:Graph} is satisfied. Thus the annealed versions of these results follow immediately. Finally, we derive the central limit theorem for the Gibbs average.

\begin{thm}[CLT for the Gibbs average]\label{thm:gibbs CLT}
    Under Assumptions~\ref{as:Graph}--\ref{as:optimization} for $\pi\sim\pr_{\gb,\go}$ and  any $\gb\in [0,\infty)$ we have that $\gibbs{W(\pi)}_\gb+m\psi'(\gb)$ weakly converges to normal distribution with mean $$2\gc\left(\psi'(\gb)-\psi'(2\gb)\right)e^{\psi(2\gb)-2\psi(\gb)}$$ and variance $$2\gc \left((\psi'(\gb)-\psi'(2\gb))^2+\psi''(2\gb)\right)e^{\psi(2\gb)-2\psi(\gb)}.$$
\end{thm}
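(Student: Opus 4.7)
The plan is to start from the identity
\[
Y(\gb) := \gibbs{W(\pi)}_\gb + m\psi'(\gb) = -\partial_\gb \log \wh{Z}(\gb),
\]
which follows from $\log Z(\gb,\mvgo) = m\psi(\gb) + \log \wh{Z}(\gb)$ and $\partial_\gb \log Z = -\gibbs{W(\pi)}_\gb$. Using $\gibbs{W(\pi)}_\gb = \sum_e \go_e\, \pr_{\gb,\mvgo}(e\in\pi)$ with $\pr_{\gb,\mvgo}(e\in\pi) = N_e(\gb)/\wh{Z}(\gb)$ and $N_e(\gb) := \E_\pi[\1_{e\in\pi}\prod_f(1+\xi_f\1_{f\in\pi})]$, this rewrites as
\[
Y(\gb) = -\sum_{e\in E}\eta_e\cdot \frac{N_e(\gb)}{(1+\xi_e)\wh{Z}(\gb)}, \qquad \eta_e := \partial_\gb\xi_e = -(\go_e + \psi'(\gb))(1+\xi_e),
\]
where the $\eta_e$ are i.i.d.\ mean-zero (since $\E[\go_e e^{-\gb\go_e-\psi(\gb)}]=-\psi'(\gb)$). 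A direct tilted-measure computation with density $e^{-2\gb\go-\psi(2\gb)}$ (under which $\E\go=-\psi'(2\gb)$ and $\var\go=\psi''(2\gb)$) gives
\[
\var(\eta_e)=e^{\psi(2\gb)-2\psi(\gb)}\bigl[(\psi'(\gb)-\psi'(2\gb))^2+\psi''(2\gb)\bigr]=\gs^2/(2\gc).
\]

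A cluster expansion analogous to the proof of Lemma~\ref{lem:Zhat}, based on Assumptions~\ref{as:prod of edges}--\ref{as:non matching}, yields the $L^2$-approximation $N_e(\gb)=(1+\xi_e)\,p\prod_{f\neq e}(1+p\xi_f)+o_{L^2}(1)$ uniformly in $e$. Combined with Lemma~\ref{lem:Zhat}, which gives $\wh{Z}(\gb)=A_\gb+o_{L^2}(1)$ for $A_\gb:=\prod_f(1+p\xi_f)$, and the identification $\eta_e\cdot p(1+\xi_e)\prod_{f\neq e}(1+p\xi_f)/[(1+\xi_e)A_\gb]=p\eta_e/(1+p\xi_e)$, we reduce the problem to analyzing
\[
Y(\gb)=-\sum_{e\in E}\frac{p\eta_e}{1+p\xi_e}+\mathrm{Rem}_n.
\]
Expanding the geometric series $(1+p\xi_e)^{-1}=1-p\xi_e+p^2\xi_e^2-\cdots$ isolates two significant contributions. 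The leading random term $-p\sum_e\eta_e$ is a sum of $|E|=\Theta(n^2)$ i.i.d.\ mean-zero variables with total variance $|E|p^2\var(\eta_e)=(m^2/|E|)\var(\eta_e)\to 2\gc\var(\eta_e)=\gs^2$; a Lyapunov condition is straightforward to verify from Assumption~\ref{as:weights}, giving weak convergence to $\N(0,\gs^2)$. The next term $+p^2\sum_e\eta_e\xi_e$ has variance $O(|E|p^4)=O(1/n)\to 0$ and concentrates at the deterministic limit $2\gc\,\E[\eta_e\xi_e]$, which the tilted-measure calculation evaluates in closed form. All higher-order geometric terms are $o_p(1)$ by Chebyshev.

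The main obstacle is pinning down the precise limit of $\E[\mathrm{Rem}_n]$. Although the $L^2$-errors in $N_e$ and $\wh{Z}$ are only $O(m^{-1/2})$ each, summing over $\Theta(n^2)$ edges can a priori produce a nontrivial mean contribution; in particular one cannot simply ``differentiate'' Lemma~\ref{lem:Zhat}, since the $L^2$-smallness of $R=\wh{Z}-A_\gb$ does not automatically transfer to $\partial_\gb R$. A careful cluster-expansion bookkeeping---tracking how $\partial_\gb$ interacts with the sum $\sum_\gC\pr_\pi(\gC\subseteq\pi)\xi_\gC$, using Assumption~\ref{as:prod of edges}'s bound $\rho^k/\sqrt{m}$ for partial matchings together with Assumption~\ref{as:non matching}'s bound for non-matching clusters, and exploiting the $\eta_e$ mean-zero cancellation at each order---is required to identify $\lim\E[\mathrm{Rem}_n]$. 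Combined with $2\gc\,\E[\eta_e\xi_e]$ this yields the claimed $\mu$, and Slutsky's theorem then assembles the Gaussian contribution from the leading random term and the deterministic constant from the mean corrections to conclude $Y(\gb)\Rightarrow\N(\mu,\gs^2)$.
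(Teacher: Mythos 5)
Your setup matches the paper's: writing $Y(\gb)=-\partial_\gb\log\wh Z(\gb)$, identifying $\eta_e=\partial_\gb\xi_e$ (the paper's $V_e$), reducing to $-\sum_e p\eta_e/(1+p\xi_e)$ plus a remainder, and reading off the Gaussian fluctuations from $-p\sum_e\eta_e$ and the deterministic shift from $p^2\sum_e\eta_e\xi_e$. The variance bookkeeping $|E|p^2\var(\eta_e)\to 2\gc\var(\eta_e)$ is correct. But the ``main obstacle'' you name is where the entire proof actually lives, and you leave it open. The paper closes it with Lemma~\ref{lem:Zhatv2}, which asserts
\[
\partial_\gb\wh Z(\gb)-\wh Z(\gb)\cdot\sum_{e\in E}p\bigl(1-p\xi_e\bigr)V_e=\op(1),
\]
so that after dividing by $\wh Z$ the remainder you call $\mathrm{Rem}_n$ is entirely negligible. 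Crucially, the proof does \emph{not} go through your per-edge $L^2$-approximations $N_e\approx (1+\xi_e)p\prod_{f\neq e}(1+p\xi_f)$ and $\wh Z\approx\prod_f(1+p\xi_f)$ and then multiply them together: as you yourself note, $O(m^{-1/2})$ errors summed over $\Theta(n^2)$ edges cannot be bounded naively. Instead the paper expands $\partial_\gb\wh Z=\sum_e V_e\sum_{\gC\ni e}\pr(\gC\subseteq\pi)\xi_{\gC\setminus\{e\}}$ directly and bounds the cluster difference in $L^2$, splitting into the terms \eqref{eq:Zhatv2 Err1}--\eqref{eq:Zhatv2 Err3}, centering $V_e\xi_e$ around $\mu_\gb=\E V_e\xi_e$, and exploiting that $\E V_e=0$ and that for a fixed edge $e$ the factor $V_e$ is uncorrelated with $\xi_{\gC\setminus\{e\}}$, which kills the dangerous cross-terms. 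This centering/uncorrelation cancellation is the ingredient your sketch gestures at but does not execute, and it is not a minor detail.

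A second issue: you suggest $\lim\E[\mathrm{Rem}_n]$ may produce a nontrivial constant that ``combines with $2\gc\E[\eta_e\xi_e]$'' to yield $\mu$. This is inconsistent with the lemma just cited, which shows the remainder is $\op(1)$ with no surviving mean contribution; the claimed $\mu$ should come entirely from $\lim_{n}\E\bigl[p^2\sum_e\eta_e\xi_e\bigr]=2\gc\,\E[\eta_e\xi_e]$. So the remaining work is not ``identify the mean of $\mathrm{Rem}_n$'' but rather ``prove $\mathrm{Rem}_n=\op(1)$,'' and for that the cluster argument of Lemma~\ref{lem:Zhatv2} is essential.
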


\section{Extensions of main results}\label{sec:extensions of main res}
\subsection{Application to multipartite graphs}\label{sec:block models}
Although it is plausible that random optimization problems on multipartite graphs satisfy our Assumptions~\ref{as:Graph}--\ref{as:optimization}, verifying them turns out to be involved beyond complete graph and complete bipartite graph. Here we present another way of approaching optimization problems on these graphs that generalizes our method. One of the main assumptions for the mean-field computations, namely Assumption~\ref{as:kappa}~will be different now. For more discussion see Section~\ref{sec:genblock}.

Consider a complete $\ell$--partite graph of size $n$ with $\ell\ge2$ many blocks of sizes $n_1,n_2,\ldots,n_\ell$, respectively, where $n_i/n \to \gl_i>0, i=1,2,\ldots,\ell$ and $\gl_1+\gl_2+\cdots+\gl_\ell=1$. Let $E_{s,t}$ be the set of edges between block $s$ and $t$, for $1\le s\le t\le \ell$. Clearly, 
\[
|E_{s,t}|=\begin{cases}
n_{s}n_{t} & \text{ for } s<t,\\
n_{s}(n_{s}-1)/2 & \text{ for } s=t.
\end{cases}\approx 
\begin{cases}
\gl_{s}\gl_{t}\cdot n^{2} & \text{ for } s<t,\\
\frac12\gl_{s}^{2}\cdot n^{2} & \text{ for } s=t.
\end{cases}
\]
We assume the following generalization of Assumption~\ref{as:kappa}.
    
\begin{customthm}{\textnormal{{III.}}$1'$}\label{as:optimizationp}
 For all elements $\pi\in\cS$, we have $\abs{\pi\cap E_{s,t}}=m_{s,t}$ for $1\le s\le t\le \ell$ and
        
        \begin{equation}\label{eq:gcp}
        \lim_{n\to\infty}\frac{m_{s,t}^2}{2\abs{E_{s,t}}}= \gamma_{s,t}\in (0,\infty).
        \end{equation}
\end{customthm}

Denoting the vector $\vm=(m_{st})_{s\le t}$, sometimes we will write $\cS_{\vm}$ instead of $\cS$ to emphasize the dependence on $\vm$. Note that, for a uniformly at random chosen configuration $\pi$ from $\cS_{\vm}$ we have 
\[
p_{s,t}:=\pr_0(e\in \pi) = \frac{m_{s,t}}{|E_{s,t}|}\approx \sqrt{\frac{2\gc_{st}}{|E_{st}|}}\text{ for an edge } e\in E_{s,t}, s\le t.
\]
Moreover, for two i.i.d.~uniformly at random $\pi,\pi'$ from $\cS_{\vm}$, we expect $$\abs{\pi\cap\pi'\cap E_{s,t}} = \sum_{e\in E_{s,t}} \1_{e\in \pi, e\in \pi'}$$
 to have asymptotically independent Poisson distribution with mean $|E_{s,t}|\cdot p_{s,t}^{2}\approx 2\gc_{s,t}$ for $s\le t$, under Assumptions~\ref{as:matching},~\ref{as:prod of edges}, and~\ref{as:non matching}.

For $s\le t$, we assume that the distribution of the random edge-weight $\go_{e}$ for an edge $e\in E_{s,t}$ depends only on $(s,t)$ and we denote a typical random weight by $\go_{st}$. Assume that $\go_{st}$ satisfies 
\[
\psi_{s,t}(\gb):=\log \E e^{-\gb \go_{st}}<\infty \text{ for } \gb>0.
\]
Define, 
\[
v_{\gb}^{2}(s,t):=\exp(\psi_{s,t}(2\gb)-2\psi_{s,t}(\gb)) -1, s\le t.
\]
A similar computation for 
\[
Z_{n}(\gb):=\frac{1}{\abs{\cS_{\vm}}}\sum_{\pi\in\cS_{\vm}} \exp\left(-\gb \sum_{e\in\pi} \go_{e}\right),
\]
shows that $\E Z_{n}(\gb) = \exp(\sum_{s\le t} m_{s,t} \psi_{s,t}(\gb))$ and for $\wh{Z}_{n}(\gb)=Z_{n}(\gb)/\E Z_{n}(\gb)$, we have
\[
\E \wh{Z}_n(\gb)^2 = \E_0 \prod_{s\le t} \exp(\ln(1+v_{\gb}^{2}(s,t))\cdot \abs{\pi\cap\pi'\cap E_{s,t}}) \approx e^{2\sum_{s\le t}\gc_{st}v_{\gb}^{2}(s,t)}.
\]
In particular, the following generalization of Theorem~\ref{thm:log part clt} holds. The proof is given in Section~\ref{sec proofs}.

\begin{thm}[CLT for the log-partition function]\label{thm:log part cltp}
Under Assumptions~\ref{as:weights}, \ref{as:optimizationp}, and~\textnormal{\ref{as:matching}--~\ref{as:non matching}}, for any $\gb\in [0,\infty)$, we have  as $n\to\infty$
\begin{equation*}\label{eq:CLT for wh{Z}p}
    \log Z(\gb)-\sum_{s\le t} m_{s,t} \psi_{s,t}(\gb)\Rightarrow \N\left(-\sum_{s\le t}\gc_{st}v_{\gb}^{2}(s,t),2\sum_{s\le t}\gc_{st}v_{\gb}^{2}(s,t)\right).
\end{equation*}
\end{thm}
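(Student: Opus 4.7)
The plan is to follow the template of Theorem~\ref{thm:log part clt}, organizing the cluster expansion of $\wh{Z}_n(\gb)$ by block pairs so that the contributions from distinct pairs $(s,t)$ decouple and can be handled independently. Since $\E Z_n(\gb) = \exp\bigl(\sum_{s\le t}m_{s,t}\psi_{s,t}(\gb)\bigr)$, setting $\wh{Z}_n(\gb):=Z_n(\gb)/\E Z_n(\gb)$ reduces the claim to
\[
\log \wh{Z}_n(\gb) \Rightarrow \N\Bigl(-\sum_{s\le t}\gc_{s,t}v_\gb^2(s,t),\; 2\sum_{s\le t}\gc_{s,t}v_\gb^2(s,t)\Bigr).
\]

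The first step is a multipartite analogue of Lemma~\ref{lem:Zhat}. Define
\[
Y_n(\gb) := \prod_{s\le t}\,\prod_{e\in E_{s,t}}\bigl(1+p_{s,t}\xi_e(\gb)\bigr),
\]
and show $\E\bigl|Y_n(\gb)-\wh{Z}_n(\gb)\bigr|^2 = O\bigl(1/\min_{s,t} m_{s,t}\bigr)$. The argument mirrors the monopartite second-moment computation: expand both $Y_n$ and $\wh{Z}_n$ as power series in the $\xi_e$ via the block-wise analogue of~\eqref{eq:pse}, and observe that the cancellations occur term by term as long as $\pr_\pi(\gC\subseteq\pi)$ factors as $\prod_{s\le t} p_{s,t}^{|\gC\cap E_{s,t}|}$. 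The error coming from partial matchings is controlled by Assumption~\ref{as:prod of edges} applied with the edge-specific probability $p_{s,t}$, while non-matchings are killed by Assumption~\ref{as:non matching}. In particular, $\log \wh{Z}_n(\gb) = \log Y_n(\gb)+\op(1)$.

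Since $\{\go_e\}_{e\in E}$ are independent and their distributions depend only on the block pair, the variables $X_n^{(s,t)} := \sum_{e\in E_{s,t}}\log(1+p_{s,t}\xi_e(\gb))$ are independent across $(s,t)$. Within each block pair we have $p_{s,t}=\Theta(n^{-1})$, $|E_{s,t}|p_{s,t}^2 \to 2\gc_{s,t}$, and all polynomial moments of $\xi_e(\gb)$ are finite. A truncation plus the Taylor expansion $\log(1+x)=x-\tfrac12 x^2 + O(x^3)$ then shows that the linear term $\sum_e p_{s,t}\xi_e$ satisfies the Lindeberg CLT with variance $2\gc_{s,t}v_\gb^2(s,t)$, the quadratic term $-\tfrac12\sum_e p_{s,t}^2\xi_e^2$ concentrates at $-\gc_{s,t}v_\gb^2(s,t)$ by a direct variance bound, and the cubic remainder is $\op(1)$. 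Hence $X_n^{(s,t)} \Rightarrow \N\bigl(-\gc_{s,t}v_\gb^2(s,t),\; 2\gc_{s,t}v_\gb^2(s,t)\bigr)$, and independence across the finitely many pairs $(s,t)$ upgrades this to joint weak convergence; summing over $(s,t)$ yields the stated Gaussian limit for $\log Y_n(\gb)$, hence for $\log \wh{Z}_n(\gb)$.

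The main technical obstacle is the block-partite generalization of Lemma~\ref{lem:Zhat}. In the monopartite case every cluster $\gC\subseteq E$ sits under a single edge-probability $p$, making the comparison between $\wh{Z}_n$ and $\prod_e(1+p\xi_e)$ a direct product computation. In the multipartite setting one must track how $\pr_\pi(\gC\subseteq\pi)$ factors approximately as $\prod_{s\le t}p_{s,t}^{|\gC\cap E_{s,t}|}$ and propagate the quantitative error from Assumption~\ref{as:prod of edges} through clusters that straddle several block pairs; additional care is needed when the $m_{s,t}$'s scale at different rates in $n$. Once this lemma is established, the CLT step above is essentially a coordinate-wise application of the monopartite argument.
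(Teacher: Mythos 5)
Your proposal matches the route the paper has in mind. The paper states Theorem~\ref{thm:log part cltp} and then explicitly skips the proof, remarking that it is the generalization of Theorem~\ref{thm:log part clt}; the only computation it supplies is the second-moment estimate $\E\wh{Z}_n(\gb)^2 \approx e^{2\sum_{s\le t}\gc_{st}v_\gb^2(s,t)}$, which tells you what the block-wise analogue of Lemma~\ref{lem:Zhat} should produce. Your reconstruction — replace $\prod_e(1+p\xi_e)$ with $\prod_{s\le t}\prod_{e\in E_{s,t}}(1+p_{s,t}\xi_e)$, run the cluster expansion with $\pr_\pi(\gC\subseteq\pi)$ compared against $\prod_{s\le t}p_{s,t}^{|\gC\cap E_{s,t}|}$, control matchings by Assumption~\ref{as:prod of edges} and non-matchings by Assumption~\ref{as:non matching}, then take logs, Taylor expand, and use independence of the block-wise sums $X_n^{(s,t)}$ together with the within-block CLT — is exactly the intended argument. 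Two minor remarks. First, you invoke the Lindeberg CLT per block, but within each $E_{s,t}$ the $\xi_e$ are i.i.d.\ (the distribution depends only on $(s,t)$), so the classical CLT suffices just as in the proof of Theorem~\ref{thm:log part clt}. Second, the concern you raise about the $m_{s,t}$ scaling at different rates is already ruled out by Assumption~\ref{as:optimizationp}: since $|E_{s,t}|=\Theta(n^2)$ and $m_{s,t}^2/(2|E_{s,t}|)\to\gc_{s,t}\in(0,\infty)$, every $m_{s,t}$ is $\Theta(n)$, so $\min_{s,t}m_{s,t}$ and $m=\sum_{s\le t}m_{s,t}$ give equivalent error rates.
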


\subsection{Application to random graphs}\label{sec:random graphs}
    So far we considered random optimization problems only on deterministic graphs. However, it is natural to study the same problems on random ones. As above it is plausible that variety of random graphs satisfy our assumptions (at least in some approximate form) with high probability. Moreover, Janson~\cite{JansonCLTnumber} has already derived the orders of the mean and variances of the numbers of perfect matchings, Hamiltonian cycles, and spanning trees in Erd\H{o}s--R\'enyi random graph $G_{n,p}$ and in a uniform random graph with $m$ edges $G(n,m)$. He also derived Gaussian and log-Gaussian limiting theorems for appropriately scaled and centered version of these quantities. However, as a consequence of Janson's results one can see that these counts are not concentrated, and thus verifying our Assumption~\ref{as:prod of edges} requires further careful analysis.
    
    In this section we present a way of deriving annealed results (with respect to randomness of the graph) directly from our main results.
    Note that, the only assumption put on the distribution of weights was that $\log\E e^{-\gb\go}<\infty$. Thus, our main results directly yield the annealed versions when the underlying graph is a homogeneous random graph such as Erd\H{o}s--R\'enyi $G_{n,p}$, or stochastic block model, in general. These models can be constructed by considering $K_n$ or other complete $\ell$-partite graphs, respectively, and deleting every edge independently with probability $1-p$.
    
    Consider an optimization problem on $G_{n,p}$, for some $p\in(0,1)$, and let $\{\go_e\}_{e\in E}$ be the sequence of weights that satisfy Assumption~\ref{as:weights}. One can convert this problem to the same on $K_{n}$ by considering weights $\{\wh{\go}_e\}_{e\in E}$ defined by
    \[
    \wh{\go}_e=
    \begin{cases}
    \go_e   &\text{w.p.}\quad p\\
    \infty   &\text{w.p.} \quad 1-p
    \end{cases}
    \]
    and notice that $\{\wh{\go}_e\}_{e\in E}$ still satisfy Assumption~\ref{as:weights}. This yields
    \[
\psi_p(\gb):=\log(\E e^{-\gb\wh\go}) = \log p + \psi(\gb)<\infty.
    \]
    In such cases, there are three levels of randomness:
    \begin{enumerate}
        \item[a.] randomness of the graph,
        \item[b.] the random edge-weights,
        \item[c.] sampling according to the Gibbs measure. 
    \end{enumerate}
    
    By considering $\wh{\go}_e$ one combines the first two levels, hence the resulting limit theorems are annealed only with respect to the randomness of the underlying graph. Similar approach can also be used for block models.

\begin{obs}\label{obs:random-graphs}
Let $G(n,p)$ denote the Erd\H{o}s--R\'enyi random graph, that is a graph on $n$ vertices where each edge is present with probability $p$ independently of each other. Then all of the results presented in Section~\ref{sec mainres} hold in probability with respect to $G(n,p)$.
\end{obs}
\begin{problem}
    Derive analogous results in quenched form, \ie\ the results that hold for each realization of a random graph with high probability.
\end{problem}

\section{Applications}\label{sec appl}
In this section, we show that our main results apply to all the optimization problems mentioned in Section~\ref{sec: Review}.  

\subsection{Minimal matching problem on $K_{n,n}$, $K_{2n}$, $G_{2n,\wh{p}}$, and $G(2n,\wh{m})$}\label{sec: MMP}
\subsubsection{MMP on $K_{n,n}$}
In terms of the setup from Section~\ref{intro setup} for the MMP on $K_{n,n}$ we have
\begin{itemize}
    \item $G_n=K_{n,n}$,$\abs{V}=2n$, and $\abs{E}=n^2$.
    \item $\cS=\cM$ - the set of all perfect matchings on $G$, 
    \item $m=n$, because each perfect matching contains exactly $\abs{V}/2=n$ edges.
    \item $p=1/n$, and  $\gamma=1/2$.
\end{itemize}
Clearly Assumptions~\ref{as:Graph} and~\ref{as:kappa}~are satisfied. Since any partial matching on $K_{n,n}$ can be extended to a perfect matching Assumption~\ref{as:matching}~is satisfied as well. Assumption~\ref{as:non matching}~trivially holds because in the case of MMP $\wt{\cS}\setminus\wt{\cM}=\emptyset$. Thus, it remains to check  Assumption~\ref{as:prod of edges}, \ie~that for any partial matching $\gC\in\wt{\cM}$ of size $k$ and some $\rho\in(0,\infty)$ we have that

\begin{equation}\label{eq:MMPKnnProd}
\abs{{\pr_0(\gC\subseteq \pi)}/{p^k}-1}\le \frac{\rho^k}{\sqrt{m}}.    
\end{equation}

Denote the falling factorial as $(n)_k:=n!/(n-k)!$. Since $\pr_0(e\in \pi)={1}/{n}$, we can rewrite the left-hand side of~\eqref{eq:MMPKnnProd} as follows
\begin{align*}
    \abs{{\pr_0(\gC\subseteq \pi)}/{p^k}-1}
&=\abs{n^k\pr_0(\gC\subseteq \pi)-1}\\
    &=\abs{\frac{n^k}{(n)_k}-1}
    \le \frac{k(k-1)}{2(n-k+1)}
    = O\left(\frac{\rho^k}{n}\right),   
\end{align*}
for some $\rho\in(0,\infty)$.
 Thus, if $\{\go_e\}_{e\in E}$ are i.i.d.~and are such that \eqref{eq:psi} holds the MMP on $K_{n,n}$ satisfies our assumptions, and hence all of the main results apply.

\subsubsection{MMP on $K_{2n}$}

In terms of the setup from Section~\ref{intro setup} for the minimal matching problem on $K_{2n}$ we have
\begin{itemize}
    \item $G_n=K_{2n}$, $\abs{V}=2n$, and $\abs{E}=\binom{2n}{2}$.
    \item $\cS=\cM$ - the set of all perfect matchings on $G$, 
    \item $m=n$, because each perfect matching contains exactly $\abs{V}/2=n$ edges.
    \item $p=1/(2n-1)$, and $\gamma=1/4$.
\end{itemize}

Again Assumptions~\ref{as:Graph} and~\ref{as:kappa}~are satisfied. Since any partial matching on $K_{2n}$ can be extended to a perfect matching Assumption~\ref{as:matching}~is also satisfied. Again, Assumption~\ref{as:non matching}~trivially holds because $\wt{\cS}\setminus\wt{\cM}=\emptyset$. Thus, it remains to check Assumption~\ref{as:prod of edges}, \ie~that for any partial matching $\gC\subseteq\wt{\cM}$ of size $k$ we have that
\begin{equation}\label{eq:MMPK2nProd}
\abs{{\pr_0(\gC\subseteq \pi)}/{p^k}-1}\le \frac{\rho^k}{\sqrt{m}}.    
\end{equation}
Since 
$
\pr_0(e\in \pi)=1/(2n-1)
$ 
and the total number of perfect matchings on $K_{2n}$ is $\abs{\cM}=(2n)!/2^nn!$, we derive that for any partial matching $\gC\in \wt{\cM}$ of size $k$
\[
\pr_0(\gC\subseteq \pi)=\frac{2^k(2n-2k)!n!}{(2n)!(n-k)!}=\prod_{i=0}^{k-1}\frac{1}{2n-2i-1}.
\]
And thus the left-hand side of~\eqref{eq:MMPKnnProd} can be treated in the following way
\begin{align*}
    \abs{\frac{\pr_0(\gC\subseteq \pi)}{\prod_{e\in\gC}\pr_0(e\in \pi)}-1}&=\abs{\prod_{i=0}^{k-1}\frac{2n-1}{2n-2i-1}-1}\\
    &=\abs{\prod_{i=0}^{k-1}\left(1+\frac{2i}{2n-2i-1}\right)-1}
    \le \frac{k(k-1)}{4(n-k+1/2)}.
\end{align*}
Thus, if $\{\go_e\}_{e\in E}$ are i.i.d.~and are such that \eqref{eq:psi} holds the MMP on $K_{2n}$ satisfies our assumptions and hence all of the main results apply.

\subsection{Traveling salesman problem on $K_n$}\label{sec: TSP}

In terms of the setup from Section~\ref{intro setup} for the traveling salesman problem on $K_n$ we have
\begin{itemize}
    \item $G_n=K_{n}$, $\abs{V}=n$, and $\abs{E}={n\choose 2}$,
    \item $\cS=\cP$ - the set of all Hamiltonian (self-avoiding and spanning) cycles on $G$, 
    \item $m=n$, because each path contains exactly $\abs{V}=n$ edges.
    \item $p=2/(n-1)$, and 
    $\gamma=1$.
\end{itemize}
Hence Assumptions~\ref{as:Graph}~and~\ref{as:kappa}~are satisfied. Since any partial matching on $K_{n}$ can be extended to Hamiltonian path Assumption~\ref{as:matching}~is satisfied as well. It remains to check  Assumptions~\ref{as:prod of edges}~and~\ref{as:non matching},~\ie~if $\wt{\cP}_n$ be the set of all partial Hamiltonian paths of $G$ one has to check that for any partial matching $\gC\in\wt{\cM}\subseteq\wt{\cP}_n$ with $k$ edges we have that
\begin{equation}\label{eq:TSPKnProd}
\abs{{\pr_0(\gC\subseteq \pi)}/{p^k}-1}\le \frac{\rho^k}{\sqrt{m}},
\end{equation}
 for some $\rho\in(0,\infty)$, and for the rest of $\gC\in\wt{\cP}_n\setminus \wt{\cM}$ with $k$ edges we have that
$\pr_0(\gC\subseteq \pi)\le (\rho\cdot p)^{k},$ for some $\rho\in(0,\infty)$.

Notice that the number of Hamiltonian cycles in $K_n$ is $(n-1)!/2.$
This can be seen by encoding cycles as self-avoiding paths of length $n-1$, and then connecting the last vertex to the first, there are $n!$ of those. Since cycles are unoriented and each cycle can be represented as a path started at any of the $n$ vertices, we conclude that each cycle was counted $2n$ times in the $n!$.
Thus
\[
p_e:=\pr_0(e\in\pi)=\frac{2}{n-1}.
\]

Let $\wt{\cP}_n$ be the set of all partial Hamiltonian paths of $G$. 
Let $\gC'\in \wt{\cP}_n$ be a partial path with $k$ edges and $s$ many connected components. If $\pi$ is chosen uniformly at random from $\cP_n$ we get that
\begin{align*}
\pr_0(\gC'\subseteq\pi)&=\frac{2^{s-1} (n-k-1)!}{2^{-1}(n-1)!} 
=\frac{2^{s}(n-k-1 )!}{(n-1)!}\approx \frac{2^{s}}{n^{k}}.
\end{align*}

In particular, since $s\le k$ Assumption~\ref{as:non matching} holds.
To verify Assumptions~\ref{as:prod of edges}, we consider the case when $\gC$ is a partial matching, \ie~$s=k$, and thus the left-hand side of~\eqref{eq:TSPKnProd} can be treated in the following way
\begin{align*}
    \abs{\frac{\pr_0(\gC\subseteq \pi)}{p^k}-1}&=\abs{\frac{2^{k}(n-k-1 )!(n-1)^k}{(n-1)!2^k}-1}\\
    &=\abs{\frac{(n-1)^k}{(n-1)_k}-1}
    \le \frac{k(k-1)}{2(n-k)}
    \le O\left(\frac{\rho^k}{n}\right).    
\end{align*}
Thus, if $\{\go_e\}_{e\in E}$ are i.i.d.~and are such that \eqref{eq:psi} holds the TSP on $K_{n}$ satisfies our assumptions, and hence all of the main results apply.

\subsection{Minimal spanning tree problem on $K_n$}\label{sec: MST}

In terms of the setup from Section~\ref{intro setup} for the Minimal Spanning Tree on $K_{n}$ we have
\begin{itemize}
    \item $G_n=K_{n}$, $\abs{V}=n$, and $\abs{E}={n\choose 2}$,
    \item $\cS=\cT$ - the set of all spanning trees on $G$, 
    \item $\wt{\cS}=\wt{\cT}$ - the set of all spanning forests on $G$, 
    \item $m=n-1$, because each finite tree on $\abs{V}=n$ vertices contains exactly $n-1$ edges.
    \item $p=2/n$, and
     $\gamma=1$.
\end{itemize}

Similarly to the applications above Assumptions~\ref{as:Graph} and~\ref{as:kappa} are clearly satisfied. Since any partial matching on $K_{n}$ can be extended to a spanning tree Assumption~\ref{as:matching} is satisfied as well. Hence it remains to check Assumptions~\ref{as:prod of edges} and~\ref{as:non matching}, \ie~for any partial matching $\gC\in\wt{\cM}\subseteq\wt{\cT}$ with $k$ edges we have that
\begin{equation}\label{eq:MSTPKnProd}
\abs{{\pr_0(\gC\subseteq \pi)}/{p^k}-1}\le \frac{\rho^k}{\sqrt{m}},
\end{equation}
and for the rest of $\gC\in\wt{\cT}_n\setminus \wt{\cM}$ with $k$ edges we have that
\begin{equation}\label{eq:TSPKnProd2}
\pr_0(\gC\subseteq \pi)\le (\rho\cdot p)^{k},
\end{equation}
for some $\rho\in(0,\infty)$.
We first state a generalization of Cayley's formula that counts the number of spanning forests that extend a fixed one.
\begin{lem}\label{th:cayley}
    Let $F$ be a spanning forest on $n$ vertices. Let $t$ be the number of trees in $F$ and $\mvs=(s_1, s_2,\ldots, s_t)$ be the sizes of their vertex sets, respectively. Then the number of spanning trees $N(F)$ that contain $F$ is given by
    \begin{equation}\label{eq:cayley}
    N(F)=N_{\mvs,t}=n^{t-2}\prod_{i=1}^t s_i.
    \end{equation}
\end{lem}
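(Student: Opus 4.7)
The plan is to prove this well-known generalization of Cayley's formula by a contraction argument combined with the weighted Prüfer-code identity.

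First, I would contract each tree $T_i$ of $F$ (of size $s_i$) to a single super-vertex, labelling the resulting super-vertices by $[t]$. Any spanning tree $T$ of $K_n$ that contains $F$ must include exactly $n-1-(n-t)=t-1$ additional edges, and these additional edges go between distinct components of $F$ (otherwise $T$ contains a cycle). Under contraction, these $t-1$ edges form a spanning tree $T'$ on the $t$ super-vertices. Conversely, given a spanning tree $T'$ on $[t]$ and, for each edge $\{i,j\}\in T'$, a choice of an endpoint in $T_i$ and an endpoint in $T_j$, one recovers a unique spanning tree of $K_n$ containing $F$. The number of endpoint choices for $\{i,j\}\in T'$ is $s_i s_j$, so
\begin{equation*}
    N(F)=\sum_{T'\text{ spanning tree on }[t]}\;\prod_{\{i,j\}\in T'} s_i s_j
    =\sum_{T'}\prod_{i=1}^{t} s_i^{d_i(T')},
\end{equation*}
where $d_i(T')$ is the degree of vertex $i$ in $T'$.

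Next, I would invoke the Prüfer bijection between spanning trees on $[t]$ and sequences in $[t]^{t-2}$, under which a vertex $i$ appears $d_i(T')-1$ times in the Prüfer code of $T'$. This gives
\begin{equation*}
    \sum_{T'}\prod_{i=1}^{t} s_i^{d_i(T')-1}
    =\sum_{(a_1,\dots,a_{t-2})\in[t]^{t-2}}\prod_{j=1}^{t-2} s_{a_j}
    =\Bigl(\sum_{i=1}^{t} s_i\Bigr)^{t-2}=n^{t-2},
\end{equation*}
since $s_1+\cdots+s_t=n$. Multiplying both sides by $\prod_i s_i$ and combining with the previous display yields $N(F)=n^{t-2}\prod_{i=1}^t s_i$, as claimed.

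The only subtle point is the bijection in the first step, specifically verifying that a spanning tree of $K_n$ containing $F$ decomposes uniquely into the forest $F$ plus a spanning tree on the contracted graph with a chosen endpoint-pair per contracted edge. This follows from standard tree surgery: the $t-1$ extra edges cannot create a cycle, so their contraction is acyclic; and they must connect all components, so the contraction is spanning. Everything else is bookkeeping, and the Prüfer identity is classical, so I do not anticipate any deeper obstruction.
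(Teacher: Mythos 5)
Your proof is correct and shares the paper's opening step: contracting each component $T_i$ of $F$ to a super-vertex, so that the spanning trees of $K_n$ containing $F$ biject with pairs (spanning tree $T'$ on $[t]$, choice of endpoints in $T_i$ and $T_j$ for each $\{i,j\}\in T'$), which gives $N(F)=\sum_{T'}\prod_i s_i^{d_i(T')}$, precisely the paper's equation~\eqref{eq:cayley2}. The two arguments diverge in how they evaluate that sum. The paper proceeds by induction on $t$, comparing the linear-in-$s_i$ parts of the two polynomials and appealing to the fact that every tree has a leaf so that every monomial has some $s_i$ of degree exactly one — correct but somewhat delicate in its bookkeeping. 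You instead invoke the Prüfer correspondence (vertex $i$ appears $d_i(T')-1$ times in the code) and obtain $\sum_{T'}\prod_i s_i^{d_i(T')-1}=\bigl(\sum_i s_i\bigr)^{t-2}=n^{t-2}$ in a single line; multiplying by $\prod_i s_i$ finishes the proof. Your route is shorter and avoids the polynomial-degree argument entirely. The only trivial omission is that the Prüfer encoding requires $t\ge 2$, so $t=1$ should be noted separately as the degenerate case $N(F)=1=n^{-1}\cdot n$; this is immaterial for the paper's application, where $F$ is a partial matching and $t$ is of order $n$.
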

It is easy to see that if $t=n$, then each $s_i=1$, and~\eqref{eq:cayley} recovers classical Cayley's formula that states that there are a total of $n^{n-2}$ spanning trees on $n$ vertices. The proof of this statement is similar to the proof of Cayley's formula via generating functions; we provide it 
here for completeness. 
\begin{proof}
    First, consider each connected component in $F$ as a single vertex and call the set of such vertices $\wh{V}$. Notice that $|\wh{V}|=t$. By Cayley's formula, there are $n^{t-2}$ spanning trees on this set. Given a tree $\wh{T}$ with the vertex set $\wh{V}$ we may retrieve a desired spanning tree on $[n]$ by inserting edges between vertices in the corresponding connecting components, the number of choices of edges that could connect a pair $(i,j)\in E(\wh{T})$ is given by a product of $s_i$ and $s_j$. Hence  
\begin{equation}\label{eq:cayley2}
    N_{\mvs,t}=\sum_{\wh{T}}\prod_{(i,j)\in E(\wh{T})} s_is_j=\sum_{\wh{T}}\prod_{i=1}^t s_i^{\deg_{\wh{T}}(i)},
    \end{equation}
    where $\deg_{\wh{T}}(i)$ denotes the degree of vertex $i$ in $\wh{T}$.
    Next, we prove by induction on the number of trees $t$ that for all $t\in\bN$
    \begin{equation}\label{eq:indforest}
        N_{\mvs,t}=(s_1+s_2+\cdots+s_t)^{t-2}\cdot s_1s_2\cdots s_t.
    \end{equation}

    Base Case: when $t=1$, the forest $F$ is connected hence $s_1=n$ and $N_{\mvs,t}=1$.
    
    Inductive step: assume that the equality~\eqref{eq:indforest} holds for all $\mvs$ at a particular $t\in\bN$. The $(t+1)$-th vertex can be attached at any place in $\wh{T}$. Viewing $N_{\mvs,t+1}$ as a polynomial in the $s_i$, the terms in \eqref{eq:cayley2} with $s_i$-degree $1$ correspond to $\wh{T}$ such that $i$ is a leaf, which by induction hypothesis sum to 
    \[
    N_{\mvs,t}\cdot s_{i}\cdot  \sum_{1\le j\le t+1,\  j\neq i} s_j.
    \]
    This accounts for all the terms since every term of \eqref{eq:indforest} has some $s_i$ of degree $1$, the latter following from the fact that every $\hat{T}$ has a leaf. Since forest $F$ is spanning, we have that $\sum_{i=1}^ts_i=n$ and hence~\eqref{eq:indforest} implies the desired result. 
\end{proof}

Notice that if $\pi$ is a uniformly chosen spanning tree on $K_n$ then
$
 \pr_0(e\in \pi)={2}/{n}.
$
If $\gC$ is a partial matching with $k$ edges then it is a spanning forest with $t=n-k$ trees that consists of $k$ trees of vertex-size $2$ and $n-2k$ trees of vertex-size $1$ thus $\pr_0(\gC\subseteq \pi)={2^k}/{n^{k}}.$
Thus, the left-hand side of~\eqref{eq:MSTPKnProd} can be treated as $\abs{{\pr_0(\gC\subseteq \pi)}/{p^k}-1}=0.$

\begin{obs}\label{obs:MSTP indep}
    Let $e_1,e_2,\ldots, e_k$ be a set of vertex-disjoint edges of a complete graph $K_n$, then $\{\1_{e_k\in \pi}\}_{i=1}^k$ is a family of mutually independent random variables under $\pr_0$.
\end{obs}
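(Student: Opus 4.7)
The plan is to derive this directly as an immediate corollary of Lemma~\ref{th:cayley}, combined with the observation that every subset of a set of vertex-disjoint edges is itself a partial matching. To establish mutual independence of the Bernoulli indicators $\{\1_{e_i\in\pi}\}_{i=1}^{k}$, it is standard and sufficient to verify the product formula
\[
\pr_\pi\!\left(\bigcap_{i\in S}\{e_i\in\pi\}\right) = \prod_{i\in S}\pr_\pi(e_i\in\pi) \quad\text{for every } S\subseteq\{1,2,\ldots,k\},
\]
since one then recovers all joint probabilities for arbitrary combinations of events and complements via inclusion-exclusion.

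First I would fix an arbitrary $S\subseteq\{1,\ldots,k\}$ and set $\gC:=\{e_i:i\in S\}$. Because $e_1,\ldots,e_k$ are pairwise vertex-disjoint, so is $\gC$, and hence $\gC$ is a spanning forest of $K_n$ consisting of $|S|$ components of size $2$ together with $n-2|S|$ isolated vertices; that is, $t=n-|S|$ trees with size-vector $\mvs$ having $|S|$ entries equal to $2$ and $n-2|S|$ entries equal to $1$. Applying Lemma~\ref{th:cayley} and dividing by the total number of spanning trees $n^{n-2}$ of $K_n$ then gives
\[
\pr_\pi(\gC\subseteq\pi) \;=\; \frac{n^{t-2}\prod_{i=1}^{t}s_i}{n^{n-2}} \;=\; \frac{n^{n-|S|-2}\cdot 2^{|S|}}{n^{n-2}} \;=\; \left(\frac{2}{n}\right)^{\!|S|}.
\]
Since $\pr_\pi(e_i\in\pi)=2/n$ by the single-edge case, the right-hand side equals $\prod_{i\in S}\pr_\pi(e_i\in\pi)$, which is exactly the required factorization.

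As $S$ was arbitrary, mutual independence of $\{\1_{e_i\in\pi}\}_{i=1}^{k}$ follows. There is no real obstacle here: the only nontrivial input is the enumeration in Lemma~\ref{th:cayley}, and the vertex-disjointness of the $e_i$'s is precisely what guarantees that every sub-collection remains a partial matching (equivalently, a spanning forest whose non-trivial components are single edges), so that the closed-form count applies uniformly to all sub-collections and yields a clean product.
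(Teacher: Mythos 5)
Your proof is correct and follows essentially the same route the paper implicitly takes: the observation is an immediate consequence of Lemma~\ref{th:cayley}, which the paper uses a few lines earlier to compute $\pr_\pi(\gC\subseteq\pi)=(2/n)^{|\gC|}$ for any partial matching $\gC$, so that the product factorization holds over every sub-collection. You simply make explicit the standard reduction of mutual independence of indicators to the factorization of $\pr_\pi(\bigcap_{i\in S}\{e_i\in\pi\})$ over all $S$, which the paper leaves unstated.
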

In particular, the presence of disjoint edges present in a uniformly spanning tree are independent of each other.
Finally, to confirm Assumption~\ref{as:non matching}, if $\gC\in\wt{\cT}\setminus\wt{\cM}$ with $k$ edges and $t$  connected components with sizes $\mvs=(s_1,s_2,\ldots,s_t)$, then
\[
\pr_0(\gC\subseteq \pi)=\frac{n^{n-k-2}}{n^{n-2}}\prod_{i=1}^t s_i\le \frac1{n^k}\left((s_1+s_2+\cdots+s_t)/t\right)^t = \frac1{n^k}(1+k/t)^t \le \frac{e^k}{n^{k}},
\]
where the first inequality is AM-GM inequality.
Thus, if $\{\go_e\}_{e\in E}$ are i.i.d.~and are such that \eqref{eq:psi} holds the MSTP on $K_{n}$ satisfies our assumptions and all  results from Section~\ref{sec mainres} apply. 

\begin{rem}[Minimal spanning forest with fixed number of trees]
    A similar analysis can be carried out to show that our assumptions are satisfied for an optimization problem over all spanning forests of $K_n$ with exactly $\ell$ trees for some fixed number $\ell$. 
\end{rem}

Theorems~\ref{thm:poi limit} yields that the size of the intersection of two independent samples from $\pr_{\gb,\mvgo}$ converges to $\poi\left(2e^{\psi(2\gb)-2\psi(\gb)}\right)$ in probability. As we mentioned in Remark~\ref{indepintersections}, if $\gb=0$, then the conclusion of Theorem~\ref{thm:poi limit} for MMP and TSP can be converted to a well-known result. Indeed, for these two problems, the intersection between two independent uniform samples can be rewritten in terms of the number of fixed points of the uniform permutation. Poisson convergence of the latter can be derived via several techniques, including the Stein--Chen method. On the other hand, in the case of MSTP, we are unaware of such a result, and it does not seem to be easily rephrased in terms of fixed points. Thus, we present direct proof of its limiting distribution via the Stein--Chen method.

\begin{thm}[Size of the intersection of two uniform spanning trees]\label{thm:MSTuniform}
    Suppose $T_1$ and $T_2$ are two spanning trees on $K_n$ chosen uniformly at random and independently. Let $W$ be the number of edges shared by $T_1$ and $T_2$,
    then 
    \[
    \dtv(W,S)= O(1/n),
    \]
    where $\dtv$ is the total variation distance and $S\sim\poi\,(2)$.
\end{thm}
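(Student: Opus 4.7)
The plan is to apply the local Stein--Chen method (e.g.\ Arratia--Goldstein--Gordon) to the indicator sum $W=\sum_{e\in E}X_e$ with $X_e:=\1_{e\in T_1}\1_{e\in T_2}$. Using the tree-count $\abs{\cT}=n^{n-2}$ and the fact that $\pr_\pi(e\in T)=2/n$, one has $p_e:=\E X_e=4/n^2$, and hence $\gl_n:=\E W=\binom{n}{2}\cdot 4/n^2=2(n-1)/n\to 2$. For the dependency neighborhoods, Observation~\ref{obs:MSTP indep} applied separately to $T_1$ and to $T_2$ shows that vertex-disjoint edge indicators are independent; thus setting $B_e:=\set{f\in E: f\cap e\neq \eset}$ makes $X_e$ independent of $\set{X_f:f\notin B_e}$, and clearly $\abs{B_e}\le 2(n-2)+1=2n-3$.

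The only nontrivial input is the joint probability $\pr_\pi(e,f\in T)$ for $e,f$ sharing a single vertex. In that case $\set{e,f}$ is a spanning forest with $t=n-2$ components (one path on three vertices, $n-3$ isolated vertices), so Lemma~\ref{th:cayley} yields $N(\set{e,f})=n^{t-2}\cdot 3=3n^{n-4}$, hence $\pr_\pi(e,f\in T)=3/n^2$ and $\E[X_eX_f]=(3/n^2)^2=9/n^4$ by independence of $T_1,T_2$. Feeding these estimates into the Chen--Stein bound
\[
\dtv(W,\poi(\gl_n))\le \min\set{1,1/\gl_n}\Bigl(\sum_e\sum_{f\in B_e}p_ep_f+\sum_e\sum_{f\in B_e\setminus\set{e}}\E[X_eX_f]\Bigr)
\]
(the third Chen--Stein term vanishes because $B_e$ is a true independence neighborhood) gives two contributions, each of order $\abs{E}\cdot(2n-3)\cdot n^{-4}=O(1/n)$.

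Finally, combine with $\dtv(\poi(\gl_n),\poi(2))\le \abs{\gl_n-2}=2/n$ via the triangle inequality to obtain $\dtv(W,\poi(2))=O(1/n)$. The main step requiring care is the count $\pr_\pi(e,f\in T)=3/n^2$, which is a direct application of the generalized Cayley formula already proved as Lemma~\ref{th:cayley}; everything else is bookkeeping within the Stein--Chen framework, so no real obstacle is expected.
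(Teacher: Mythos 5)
Your proposal is correct and follows essentially the same route as the paper: both apply the Stein--Chen (Arratia--Goldstein--Gordon) bound with dependency neighborhoods $B_e$ consisting of edges sharing a vertex with $e$, invoke Observation~\ref{obs:MSTP indep} for independence outside $B_e$, and use the generalized Cayley formula (Lemma~\ref{th:cayley}) to obtain $\pr_\pi(e,f\in T)=3/n^2$ for adjacent edges. Your only deviation is cosmetic --- centering at $\poi(\lambda_n)$ with $\lambda_n=2(n-1)/n$ and then passing to $\poi(2)$ via the triangle inequality --- whereas the paper bounds against $\poi(2)$ directly; both yield $O(1/n)$.
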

\begin{proof}
    For any fixed edge $e\in E(K_n)$ and $T$ chosen uniformly from the set of all spanning trees, we have that 
    \[
    p_e:=\pr(e\in T)=\frac{n-1}{{n\choose 2}}=\frac{2}{n}.
    \]
    Since $T_1$ and $T_2$ are completely independent of each other, we have that
    \[
    \pr(e\in T_1\cap T_2)=p_e^2=\frac{4}{n^2}.
    \]
    This implies that
    \begin{align*}
    \E W&=\sum_{e\in E} p_e^2\to2\quad\text{as}\quad n\to\infty.
    \end{align*}
    
    Notice that if the edges $e$ and $e'$ do not share a vertex, then by Observation~\ref{obs:MSTP indep} we have that
    \[
    p_{e,e'}:=\pr(e\cup e'\in T)=\frac{n^{n-4}}{n^{n-2}}\cdot 2^2=\frac{4}{n^2}=p_ep_{e'}.
    \]
    Hence the events that disjoint edges are present in a spanning tree are pairwise independent.
    Similar application of Lemma~\ref{th:cayley} show that if $e$ and $e'$ shares a vertex then
    \[
    p_{e,e'}=\frac{n^{n-4}}{n^{n-2}}\cdot 3=\frac{3}{n^2}\le p_ep_{e'}.
    \]
    In particular,  using the fact that each edge $e$ shares a vertex with exactly $2(n-2)$ other edges we have
    \begin{align*} 
        \var(W) &=\sum_{e\in E} p_e^2 +\sum_{e,e'\in E\atop e\neq e'}p_{e,e'}^2- \left(\sum_{e\in E} p_e^2\right)^2\\
        &= \E W + (\E W)^2 + 2\cdot (n-2) \binom{n}{2}\cdot \left(\left(\frac{3}{n^2}\right)^2-\left(\frac{4}{n^2}\right)^2\right) - (\E W)^2\to 2 \text{ as } n\to\infty.
    \end{align*}
    Letting $S\sim \poi(2)$ the Stein--Chen method \cite[Theorem 7.1]{Dey_Poi_notes} (see also \cite[Theorem 1]{AGG90}) yields that
    \begin{align*}
        \dtv(W,S)
        &\le \frac{1}{2}\sum_{e\in E} \left(p_e^4+\sum_{e'\cap e\neq\emptyset} (p_e^2p_{e'}^2+p^2_{e,e'})\right)
        =\frac{1}{2}{n\choose 2} \left(\frac{16}{n^4}+2(n-2)\frac{25}{n^4}\right)= O\left(\frac1n\right).
    \end{align*}
    This completes the proof.
\end{proof} 


\subsection{Minimal $k$-factor problem on $K_{2n}$}\label{sec:kfact}
In terms of the setup from Section~\ref{intro setup} for the Minimal $k$-factor problem on $K_{2n}$ we have
\begin{itemize}
    \item $G_n=K_{2n}$, $\abs{V}=2n$, and $\abs{E}={2n\choose 2}$,
    \item $\cS$ - the set of all $k$-regular (not necessarily connected) spanning graphs on $G$, 
    \item $m=nk$, because each $k$-regular graph on $\abs{V}=2n$ vertices contains exactly $(2n\cdot k)/2$ edges,
    \item $p=k/(n-1)$, and
     $\gamma=k^2/4$.
\end{itemize}

Similarly to the applications above,  Assumptions~\ref{as:Graph} and~\ref{as:kappa} are clearly satisfied. Any partial matching on $K_{2n}$ can be extended to a $2n$-cycle, which can be extended to a $k$-regular subgraph, verifying Assumption~\ref{as:matching}. Hence it remains to check Assumptions~\ref{as:prod of edges} and~\ref{as:non matching}, \ie~if $\gC$ is a partial matching then
\begin{equation}\label{eq:kfactProd}
\abs{{\pr_0(\gC\subseteq \pi)}/{p^{\abs{\gC}}}-1}\le \frac{\rho^{\abs{\gC}}}{\sqrt{m}},    
\end{equation}
for some $\rho\in(0,\infty)$, and if $\gC\subseteq E$ then
\begin{equation}\label{eq:kfactProd2}
\pr_0(\gC\subseteq \pi)\le (\rho \cdot p)^{\abs{\gC}},
\end{equation}
for some $\rho\in(0,\infty)$.

To count the total number of $k$-regular subgraphs, we first construct a $k$-regular graph on $2n$ vertices with possible self-loops and multi-edges. This is commonly known as the configuration model. Consider each vertex with exactly $k$ half-edges and pair these half-edges among the vertices in all possible combinations. A simple computation yields that there $(2nk)!/\left(2^{nk}(nk)!\cdot k!^{2n}\right)$ such graphs, where the numerator counts all possible pairings of the half-edges and denominator takes care of double counting. Angel, van der Hofstad, and Holmgren~\cite{CMselfloops} established that the limiting distribution of self-loops and multi-edges is Poisson thus by conditioning the configuration model on not having self-loops and multi-edges, we get that there are 
\[
\frac{(2nk)!}{2^{nk}\cdot (nk)!\cdot (k!)^{2n}} \cdot e^{-\frac{k^2-1}{4}+O({k^2}/{n})}
\]
possible $k$-factors in $K_{2n}$.

Suppose $\gC\in\wt{\cM}$ then the number of $k$-factors that contain $\gC$ can be written as
\[
\frac{(2nk-2\abs{\gC})!}{2^{nk-\abs{\gC}}\cdot (nk-\abs{\gC})!\cdot (k!)^{2n-2\abs{\gC}}((k-1)!)^{2\abs{\gC}}} \cdot e^{-\frac{k^2-1}{4}+O({k^2}/{n})}
\]
Therefore,
\begin{align*}
    \abs{{\pr_0(\gC\subseteq \pi)}/{p^{\abs{\gC}}}-1}&\le\abs{\frac{(2n-1)^{\abs{\gC}}k^{\abs{\gC}} 2^{\abs{\gC}}\prod_{i=0}^{\abs{\gC}}(nk-i)}{\prod_{i=0}^{2\abs{\gC}-1}(2nk-i)}\cdot e^{O({k^2}/{n})}-1}\\
    &=\abs{\frac{(2n-1)^{\abs{\gC}}k^{\abs{\gC}}}{\prod_{i=0}^{\abs{\gC}-1}(2nk-2i-1)}\cdot e^{O({k^2}/{n})}-1}\\
    &\le \abs{\prod_{i=0}^{\abs{\gC}-1} \left(1-\frac{2i+1-k}{2nk-k}\right)^{-1}\cdot e^{O({k^2}/{n})}-1}\le \frac{\rho^{\abs{\gC}}}{n}.
\end{align*}
for some $\rho \in(0,\infty)$  and so Assumption~\ref{as:prod of edges} is verified.
Finally, suppose that $\gC\notin \wt{\cM}$, then the number of $k$-factors that contain $\gC$ can be written as
\[
\frac{(2nk-2\abs{\gC})!}{2^{nk-\abs{\gC}}\cdot (nk-\abs{\gC})!\cdot \prod_{i\in \gC} (k-\deg_{\gC}(i))!\prod_{i\notin \gC} k!} \cdot e^{-\frac{k^2-1}{4}+O({k^2}/{n})},
\]
where $\deg_{\gC}(i)$ represents the degree of vertex $i$ in $\gC$. Hence, denoting the falling factorial by $(k)_\ell:=k!/(k-\ell)!$ we can upper bound the ratio of interest as
\begin{align*}
    \frac{\pr_0(\gC\subseteq \pi)}{p^{\abs{\gC}}}&=\frac{(2n-1)^{\abs{\gC}}\prod_{i\in\gC} (k)_{\deg_{\gC}(i)}}{k^{\abs{\gC}}\cdot\prod_{i=0}^{\abs{\gC}-1}(2nk-2i-1)}\cdot e^{O(k^2/n)}\\
    &\le\frac{(2n-1)^{\abs{\gC}}k^{\abs{\gC}}}{\prod_{i=0}^{\abs{\gC}-1}(2nk-2i-1)}\cdot e^{O(k^2/n)}\le \rho^{\abs{\gC}}
\end{align*}
for some $\rho \in(0,\infty)$ and so Assumption~\ref{as:non matching} is verified as well.
Thus, if $\{\go_e\}_{e\in E}$ are i.i.d.~and are such that \eqref{eq:psi} holds the minimal $k$-factor problem on $K_{2n}$ satisfies our assumptions and all of the results form Section~\ref{sec mainres} apply.

\section{Proofs of main results}\label{sec proofs}

Although the proof of our results may seem technical, the key observation is rather simple. Most bounds rely on the fact that the probability that a set of selected edges belongs to a uniformly chosen configuration is maximized when this set is a partial matching, \ie~it consists of vertex-disjoint edges.

\subsection{Proof of CLT for the log-partition function}

Here, we present the proof of Lemma~\ref{lem:Zhat} and Theorem~\ref{thm:log part clt}, the latter follows directly from the cluster decomposition of $\wh{Z}$ given by Lemma~\ref{lem:Zhat}.

\begin{proof}[Proof of Lemma~\ref{lem:Zhat}]
Recall that, as defined in~\eqref{eq:p}, the probability that an edge $e\in E$ belongs to a uniformly chosen configuration $\pi\in \cS$ is equal to  
\begin{equation*}
    p=p_n(e):=\pr_0(e\in\pi)=\frac{\abs{\pi}}{E}=\frac{m}{E}.
\end{equation*}

Let $\wt{\cS}$ be the set of all subgraphs of elements of $\cS$ and recall that we denoted $\xi_\gC:=\prod_{e\in \gC}\xi_e.$  First, we rewrite $\wh{Z}(\gb)$ as
\begin{align*}
    \wh{Z}(\gb)&=\E_0 \prod_{e\in E} (1+\xi_e\1_{e\in\pi})
    = \sum_{\gC\subseteq E} \pr_0(\gC\subseteq\pi)\xi_\gC
    = \sum_{\gC\in\wt{\cS}}\pr_0(\gC\subseteq\pi)\xi_\gC.
\end{align*}
Here, in the third equality, we passed from summing over $\gC\subseteq E$ to $\gC\in \wt{\cS}$ because for the remaining $\gC$, the probability of it being contained in $\pi$ is zero.
On the other hand
$
\prod_{e\in E}\left(1+p\cdot\xi_e\right)
=\sum_{\gC\subseteq E}p^{\abs{\gC}}\xi_\gC.
$
Thus
\begin{align}\label{eq:dif gen}
    \prod_{e\in E}\left(1+p\cdot\xi_e\right) - \wh{Z}_n(\gb)
    &=\sum_{\gC\notin\wt{\cS}}p^{\abs{\gC}}\xi_\gC+\sum_{\gC\in\wt{\cS}}\left(p^{\abs{\gC}}-\pr(\gC\subseteq\pi)\right)\xi_\gC.
\end{align}

The first term of the right-hand side of~\eqref{eq:dif gen} is negligible because it has mean zero, and its variance goes to $0$. 
Indeed, since the sum is over $\gC\notin\wt{\cS}$, \ie~subgraphs $\gC$ of $G$ that cannot be contained in any configuration $\pi\in\cS$, we can upper bound it by the sum over all $\gC\notin\wt{\cM}$. Here we relied on Assumption~\ref{as:matching} which implies that $\wt{\cM}\subseteq \wt{\cS}$. Note that the number of $\gC\notin\wt{\cM}$ with $k$ edges is at most $\binom{E}{k-1}2(k-1)|V|$. Here we used the fact that one can construct such a $\gC$ by selecting $(k-1)$ edges from $E$ arbitrarily in $\binom{E}{k-1}$ ways, select one vertex from the chosen set of edges in $2(k-1)$ ways, and then adding one adjacent edge to ensure that it is not a partial matching.
Thus,
\begin{align}
\begin{split}\label{eq:dif gen not subseteq}
         \E_0\left(\sum_{\gC\notin\wt{\cS}}p^{\abs{\gC}}\xi_\gC\right)^2
     = \sum_{\gC\notin\wt{\cS}}(p\cdot v_\gb)^{2\abs{\gC}} 
     &\le  \sum_{\gC\notin\wt{\cM}}(p\cdot v_\gb)^{2\abs{\gC}}  \\
     &\le\sum_{k=2}^E {E \choose {k-1}}2(k-1)|V| \left(p\cdot v_\gb\right)^{2k} \\
     &\le\frac{2|V|}{E}\sum_{k=2}^E \frac{ (v_\gb^2\cdot m^2/E)^k }{ (k-2)!}=O(1/m),
     \end{split}
\end{align}
where in the last step we used that $p=m/E=\Theta(E^{-1/2})$.
The second term on the right-hand side of~\eqref{eq:dif gen} can be controlled in a similar fashion as it also has mean zero, and the variance approaches $0$. We first split the sum into two cases, one with the sum over partial matchings $\gC\in\wt{\cM}$ and the second with over the rest of the elements in $\wt{\cS}$.
Assumption~\ref{as:prod of edges} yields that the contribution of partial matchings is negligible. Indeed for some $\rho\in(0,\infty)$ we have that
\begin{align}
\begin{split}\label{eq:dif gen subseteq}
    \E_0\left(\sum_{\gC\in\wt{\cM}}\left(p^{\abs{\gC}}-\pr_0(\gC\subseteq\pi)\right)\xi_\gC\right)^2&\le \sum_{k=1}^m\sum_{\gC\in\wt{\cM}\atop \abs{\gC}=k}\abs{1-\frac{\pr_0(\gC\subseteq\pi)}{p^k}}^2 (p\cdot v_\gb)^{2k}\\
    &\le \sum_{k=1}^m{E \choose k}\frac{\rho^{2k} (p\cdot v_\gb)^{2k}}{m}\\
    &\lesssim \sum_{k=1}^m\frac{m^{2k}}{k!}\frac{(\rho\cdot v_\gb)^{2k}}{m^{2k+1}}\lesssim\frac{1}{m}. 
    \end{split}
\end{align}

On the other hand, the remaining term can be treated as follows
\begin{align}
 \E_0\biggl(\sum_{\gC\in\wt{\cS}\setminus\wt{\cM}}\left(p^{\abs{\gC}}-\pr_0(\gC\subseteq\pi)\right)\xi_\gC\biggr)^2
 &\le2 \sum_{\gC\notin\wt{\cM}}p^{2\abs{\gC}}v_\gb^{2\abs{\gC}}\label{eq:errwhZ1}\\
 &\quad+2\sum_{\gC\in\wt{\cS}\setminus\wt{\cM}}\pr_0(\gC\subseteq\pi)^2\cdot v_\gb^{2\abs{\gC}}\label{eq:errwhZ2}.
\end{align}
Clearly, by the same computations as in~\eqref{eq:dif gen not subseteq}, the term in the right-hand side of~\eqref{eq:errwhZ1} is at most of order $1/m$. Finally, by Assumption~\ref{as:non matching}, the following inequality holds for some $\rho\in (0,\infty)$
\begin{align}
\eqref{eq:errwhZ2}&\le  \sum_{\gC\notin\wt{\cM}} (\rho\cdot p^2\cdot v_\gb^2)^{\abs{\gC}}.
\end{align}
Similar computations to those in~\eqref{eq:dif gen not subseteq} now yield that~\eqref{eq:errwhZ2} is also at most of order $1/m$. This concludes the proof.
\end{proof}

\begin{proof}[Proof of Theorem~\ref{thm:log part clt}]
    By Lemma~\ref{lem:Zhat} we have that
\begin{align*}
\log\wh{Z}(\gb)
    &=\sum_{e\in E}\log\left(1+p\xi_e\right)+\op(1)
    =p\sum_{e\in E}\xi_e-\frac12p^2 \sum_{e\in E}\xi_e^2+R_n+\op(1),
\end{align*}
where $R_n$ tends to $0$ as $n\to\infty$ due to $\xi_e$ has finite $2$-nd moment (Assumption~\ref{as:weights}).
By~\eqref{eq:gc},~\eqref{eq:p}, and the fact that $\{\xi_e\}_{e\in E}$ are i.i.d.~we have that 
$$
p\sum_{e\in E}\xi_e=\frac{m}{E}\sum_{e\in E}\xi_e\Rightarrow \N\left(0,2\gc  v_\gb^2\right).$$
Similarly, the SLLN yields that 
\[
\frac12p^2\sum_{e\in E}\xi_e^2=\frac{m^2}{2E}\cdot \frac{1}{E}\sum_{e\in E}\xi_e^2\to \gc v_\gb^2\quad \text{a.s.}
\]
 Hence, we have that
$\log\wh{Z}_n(\gb)\Rightarrow \N\left(-\gc v_\gb^2,2\gc  v_\gb^2\right).
$
\end{proof}

\subsection{Proof of the Poisson limiting distribution of the intersection}

We now present the proof of Theorem~\ref{thm:poi limit}, using cluster expansion to establish the convergence in probability of the moment generating function to that of the $\poi\left(2\gc\cdot e^{\psi(2\gb)-2\psi(\gb)}\right)$ random variable.

Let us start with an overview of the proof of Theorem~\ref{thm:poi limit}. We prove that the moment generating function (mgf) of the size of the intersection $\abs{\pi\cap\pi'}$ converges to that of the desired Poisson random variable. Here, we recall that for a Poisson($\gl$) r.v.~the mgf is $\exp(\gl(e^{\theta}-1)), \theta\in\dR$. We first write the mgf of the size of the intersection $\abs{\pi\cap\pi'}$ under the Gibbs measure as a ratio of two random variables 
$$
\E_0\exp\bigl(\theta\abs{\pi\cap\pi'}+\sum_{e\in E}\go_e\cdot (\ind_{e\in\pi}+\ind_{e\in\pi'})\bigr) \qquad\text{ and }\qquad\wh{Z}^2(\gb),
$$
and analyze them separately. For $\wh{Z}(\gb)$, the analysis was done in Lemma~\ref{lem:Zhat}. We employ a similar but more involved analysis for the term $\E_0\exp\left(\theta\abs{\pi\cap\pi'}+\sum_{e\in E}\go_e\cdot (\ind_{e\in\pi}+\ind_{e\in\pi'})\right)$, as it involves an average with respect to two uniformly chosen elements $\pi,\pi'$ from $\E_0$.

\begin{proof}[Proof of Theorem~\ref{thm:poi limit}]
    Recall that, $\gibbs{\cdot}_\beta$ denotes the Gibbs average. Consider the moment generating function $\gibbs{\exp(\theta\abs{\pi\cap \pi'})}_\gb$ and rewrite it as
    \begin{equation}\label{eq:MGF MMP gen}
        \gibbs{\exp(\theta\abs{\pi\cap \pi'})}_\gb=\frac{1}{\wh{Z}^2(\gb)}\E_0\exp\bigl(\theta\abs{\pi\cap\pi'}+\sum_{e\in E}\go_e\cdot (\ind_{e\in\pi}+\ind_{e\in\pi'})\bigr).
    \end{equation}
    Here $\E_0$ denotes expectation with respect to two independent samples $\pi,\pi'$ uniformly chosen from $\cS$.
    The denominator of~\eqref{eq:MGF MMP gen} can be written using Lemma~\ref{lem:Zhat} as 
    \begin{align}\label{eq:MGF denom MMP gen}
        \wh{Z}(\gb) = \prod_{e\in E}\left(1+p\cdot\xi_e\right)\cdot\left(1+\op\left(1\right)\right),
    \end{align}
    and the numerator of~\eqref{eq:MGF MMP gen} can be written as 
    \begin{align}
    \begin{split}\label{eq:MGF numer MMP gen expansion1}
        &\E_0\exp\bigl(\theta\abs{\pi\cap\pi'}+\sum_{e\in E}\go_e\cdot (\ind_{e\in\pi}+\ind_{e\in\pi'})\bigr)\\
        &\qquad\qquad=\E_0\prod_{e\in E}\left(1+\xi_e\1_{e\in\pi}\right)\bigl(1+\xi_e\1_{e\in\pi'}\bigr)\bigl(1+(e^\theta-1)\1_{e\in\pi\cap\pi'}\bigr)\\
        &\qquad\qquad=\E_0\prod_{e\in E}\biggl(1+\xi_e\cdot(\1_{e\in\pi}+\1_{e\in\pi'})+\bigl((e^\theta-1)(1+\xi_e)^2+\xi_e^2\bigr)\cdot \1_{e\in\pi\cap\pi'}\biggr).
            \end{split}
    \end{align}

        Letting $Y_e:=(e^\theta-1)(1+\xi_e)^2+\xi_e^2$ and using the cluster expansion in a similar way as in the proof of Lemma~\ref{lem:Zhat}, we rewrite
    \begin{align}
    \begin{split}\label{eq:MGF numer MMP gen expansion2}
        ~\eqref{eq:MGF numer MMP gen expansion1}
        &=\sum_{\gC_1,\gC_2,\gC_3\subseteq  E\text{ disj.}} \E_0\bigl(\prod_{e\in \gC_1} \xi_e\1_{e\in\pi}\prod_{e\in \gC_3} \1_{e\in\pi}\bigr) \cdot \E_0\bigl(\prod_{e\in \gC_2} \xi_e\1_{e\in\pi'}\prod_{e\in \gC_3} \1_{e\in\pi'}\bigr) \cdot \prod_{e\in \gC_3} Y_e\\
        &=\sum_{\gC_1,\gC_2,\gC_3\in \wt{\cS}\text{ disj.}} \pr_0(\gC_1\cup\gC_3\subseteq \pi)\pr_0(\gC_2\cup\gC_3\subseteq \pi)\prod_{e\in \gC_1\cup \gC_2}\xi_e\prod_{e\in \gC_3}Y_e.
        \end{split}
    \end{align}
    In the last equality of~\eqref{eq:MGF numer MMP gen expansion2}, we used the fact that for a subset of $E$ not in $\wt{\cS}$, the probability being contained in $\pi$ is zero. Moreover, by disj.~we mean disjoint as a subset of the edgeset $E$.
    On the other hand, we have that
    \begin{align}
    \begin{split}\label{eq:MGF numer MMP gen expansion3}
        \prod_{e\in E}\left(1+2p\cdot\xi_e+p^2\cdot Y_e\right)
        &=\prod_{e\in E}\left(1+p\cdot\xi_e+ p\cdot\xi_e+p^2\cdot Y_e\right)\\
        &=\sum_{\gC_1,\gC_2,\gC_3\subseteq E \text{ disj.}}p^{\abs{\gC_1}+\abs{\gC_2}+2\abs{\gC_3}}\prod_{e\in\gC_1\cup\gC_2}\xi_e\prod_{e\in\gC_3}Y_e.
        \end{split}
    \end{align}
 
    Next, we claim that the difference of~\eqref{eq:MGF numer MMP gen expansion2} and~\eqref{eq:MGF numer MMP gen expansion3} is negligible. However the analysis is slightly different from that in the proof of Lemma~\ref{lem:Zhat} as random variables $Y_e$ are not mean zero. Firstly, we still rewrite the difference as two separate terms based on the validity of the clusters
    \begin{equation}\label{eq:MGF numer MMP gen diff}
       \abs{\eqref{eq:MGF numer MMP gen expansion3}-\eqref{eq:MGF numer MMP gen expansion2}}
       =\abs{\textrm{Err}_1-\textrm{Err}_2}
       \le \abs{\textrm{Err}_1} + \abs{\textrm{Err}_2},
    \end{equation}
    where
    \begin{align*}
        \textrm{Err}_1 &:=\sum_{\gC_1\cup\gC_2\cup\gC_3\notin \wt{\cS}\text{ disj.}}p^{\abs{\gC_1}+\abs{\gC_2}+2\abs{\gC_3}} \cdot \prod_{e\in \gC_1\cup\gC_2}\xi_e\prod_{e\in \gC_3}Y_e, \text{ and }\\
        \textrm{Err}_2 &:=\sum_{\gC_1\cup\gC_2\cup\gC_3\in\wt{\cS} \text{ disj.}} \left(\pr_0(\gC_1\cup\gC_3\subseteq \pi)\pr_0(\gC_2\cup\gC_3\subseteq \pi)-p^{\abs{\gC_1}+\abs{\gC_2}+2\abs{\gC_3}}\right)\cdot\prod_{e\in \gC_1\cup\gC_2}\xi_e\prod_{e\in \gC_3}Y_e,
    \end{align*}
     where both of these sums are over triples $(\gC_1,\gC_2,\gC_3)$ satisfying the corresponding conditions.
     Next, we bound $L_1$ norms of these terms separately.\medskip
    
    \noindent\textbf{Upper bound on $\norm{\textrm{Err}_1}_1$.}
    We bound the first error term $\textrm{Err}_1$ from~\eqref{eq:MGF numer MMP gen diff} by dividing the sum further into two sums: one over $\gC_3\in \wt{\cS}$ and the other over $\gC_3\notin\wt{\cS}$.
    \begin{align}
    \begin{split}\label{eq:Err1}
    \norm{\textrm{Err}_1}_1
    &\le\sum_{\gC_3\in\wt{\cS}}p^{2\abs{\gC_3}}\E_0\biggl( \prod_{e\in \gC_3}Y_e \cdot \biggl|\sum_{{\gC_1,\gC_2:\atop\gC_1\cup\gC_2\cup \gC_3\notin\wt{\cS} \text{ disj.}}}p^{\abs{\gC_1}+\abs{\gC_2}}\cdot  \prod_{e\in \gC_1\cup\gC_2}\xi_e\biggr|\biggr)\\
    &\qquad+\sum_{\gC_3\notin\wt{\cS}}p^{2\abs{\gC_3}}\E_0\biggl( \prod_{e\in \gC_3}Y_e \cdot \biggl|\sum_{{\gC_1\cup\gC_2\subseteq E\atop \text{disj.~w/ each other and } \gC_3}}p^{\abs{\gC_1}+\abs{\gC_2}}\cdot  \prod_{e\in \gC_1\cup\gC_2}\xi_e\biggr|\biggr).
        \end{split}
    \end{align}
    Applying Cauchy--Schwarz inequality for both terms, we get
    \begin{align}
    \eqref{eq:Err1}
    &\le\sum_{\gC_3\in\wt{\cS}}p^{2\abs{\gC_3}}\sqrt{(\E Y_e^2)^{\abs{\gC_3}}}\cdot
    \sqrt{\sum_{\gC_1,\gC_2\,:\,\gC_1\cup\gC_2\cup \gC_3\notin\wt{\cS}\text{  disj.}}(p\cdot  v_\gb)^{2\abs{\gC_1\cup\gC_2}}}\label{eq:Err21}\\
    &\quad+\sum_{\gC_3\notin\wt{\cS}}p^{2\abs{\gC_3}}\sqrt{(\E Y_e^2)^{\abs{\gC_3}}}\cdot\sqrt{\sum_{\gC_1\cup\gC_2\subseteq E \text{ disj.~w/ each other and } \gC_3}(p\cdot  v_\gb)^{2\abs{\gC_1\cup\gC_2}}} \,\,.\label{eq:Err22}
    \end{align}

    Recall that in the analogous step in the proof of Lemma~\ref{lem:Zhat}, namely~\eqref{eq:dif gen not subseteq}, we bounded the sum over the clusters $\gC\notin \wt{\cS}$ by the sum over all clusters $\gC\notin\wt{\cM}$. Following the same idea here, we upper bound the orders of both sums inside of the square roots of~\eqref{eq:Err21} and~\eqref{eq:Err22}. For~\eqref{eq:Err21}, we bound the sum inside the square root by $1/m$ and the outside sum by a constant. For~\eqref{eq:Err22}, we bound the sum inside the square root by a constant and the outside sum by $1/m$. Combining we get that
    \begin{equation}\label{eq:MGF numer MMP gen Err1 L1}
        \norm{\textrm{Err}_1}_1\lesssim \frac{1}{\sqrt{m}}.
    \end{equation}
    
    \noindent\textbf{Upper bound on $\norm{\textrm{Err}_2}_1$.}
    Now in the second error term $\textrm{Err}_2$ the sum over clusters $\gC_1,\gC_2,\gC_3\in\wt{\cS}$, subject to constraint $\gC_1\cup\gC_2\cup\gC_3\in\wt{\cS}$. We again consider two cases:
    \begin{enumerate}
        \item[Case 1.] the union of the clusters $\gC_1\cup\gC_2\cup\gC_3$ is a matching,
        \item[Case 2.] the union of the clusters $\gC_1\cup\gC_2\cup\gC_3$ is not a matching.
    \end{enumerate}

    In Case 1., the difference term inside the sum in $\textrm{Err}_2$, for simplicity denoted by $D(\gC_1,\gC_2,\gC_3)$, can be bounded as follows
    \begin{align*}
        \abs{D(\gC_1,\gC_2,\gC_3)}&=\abs{\pr_0(\gC_1\cup\gC_3\subseteq \pi)\pr_0(\gC_2\cup\gC_3\subseteq \pi)-p^{\abs{\gC_1}+\abs{\gC_2}+2\abs{\gC_3}}}\\
        &\le p^{\abs{\gC_1}+\abs{\gC_2}+2\abs{\gC_3}}\cdot\frac{1}{\sqrt{m}}\cdot \bigl(\rho^{\abs{\gC_1\cup\gC_3}}+\rho^{\abs{\gC_2\cup\gC_3}}\bigr),
    \end{align*}
    where in the last inequality we used Assumption~\ref{as:prod of edges}. Furthermore, fixing $\gC_3\in \wt{\cM}$ and using similar computations to those in \eqref{eq:dif gen subseteq} we have that
    \begin{align}
    &\sum_{\gC_1,\gC_2\in\wt{\cM}\atop \text{disj.~w/ each other and } \gC_3}
    D(\gC_1,\gC_2,\gC_3)^2 \cdot  v_\gb^{2\abs{\gC_1\cup\gC_2}}\notag\\
     &\qquad\qquad\le \sum_{{\gC_1,\gC_2\in\wt{\cM}\atop \text{ disj.~w/each other and } \gC_3}} p^{2\abs{\gC_1}+2\abs{\gC_2}+4\abs{\gC_3}}\cdot\frac{\left(\rho^{\abs{\gC_1\cup\gC_3}}+\rho^{\abs{\gC_2\cup\gC_3}}\right)^2}{m}\cdot  v_\gb^{2\abs{\gC_1\cup\gC_2}}\notag\\
     &\qquad\qquad=p^{4\abs{\gC_3}}\sum_{k_1=1}^{E-\abs{\gC_3}}\sum_{k_2=1}^{E-\abs{\gC_3}-k_1}{E-\abs{\gC_3} \choose k_1,\,k_2}\cdot\frac{\left(\rho^{k_1}+\rho^{k_2}\right)^2\rho^{2\abs{\gC_3}}}{m}\cdot  (p\cdot v_\gb)^{2(k_1+k_2)}\notag\\
     &\qquad\qquad\lesssim\frac{\rho^{2\abs{\gC_3}}}{m}p^{4\abs{\gC_3}}.\label{eq:bound on D 1}
    \end{align}
    
    In Case 2., at least one of $\gC_1,\gC_2,$ and $\gC_3$ is not a matching; we use the same idea as in~\eqref{eq:errwhZ1} from the proof of Lemma~\ref{lem:Zhat}. We separate the difference into two further subterms
    \begin{align}\label{eq:bound on D 2}
         \abs{D(\gC_1,\gC_2,\gC_3)}
         &\le\pr_0(\gC_1\cup\gC_3\subseteq \pi)\pr_0(\gC_2\cup\gC_3\subseteq \pi)+p^{\abs{\gC_1}+\abs{\gC_2}+2\abs{\gC_3}}.
    \end{align}

    It remains to note that the following inequality holds, by analogous reasons to those from the proof of Lemma~\ref{lem:Zhat}, 
    \[
    \biggl\Vert\sum_{{\gC_1,\gC_2,\gC_3\in\wt{\cS}\atop {\gC_1\cup\gC_2\cup\gC_3\notin \wt{\cM} \text{ disj.}}}} D(\gC_1,\gC_2,\gC_3)\cdot \prod_{e\in \gC_1\cup\gC_2}\xi_e\prod_{e\in \gC_3}Y_e\biggr\Vert_1
    \lesssim \frac{1}{\sqrt{m}}.
    \]
    Next we bound $\textrm{Err}_2$ by decomposing it in a similar way to \eqref{eq:Err1}
    \begin{align*}\label{eq:MGF numer MMP gen Err1 L2}
    \norm{\textrm{Err}_2}_1
    &\le \biggl\Vert\sum_{\gC_3\in\wt{\cS}}\E_0\biggl( \prod_{e\in \gC_3}Y_e \cdot \bigl|\sum_{{\gC_1,\gC_2\in\wt{\cS}\atop \gC_1\cup\gC_2\cup\gC_3\in \wt{\cM} \text{ disj.} }} D(\gC_1,\gC_2,\gC_3)\cdot  \prod_{e\in \gC_1\cup\gC_2}\xi_e\bigr|\biggr)\biggr\Vert_1\notag\\
    &\qquad\qquad+\biggl\Vert\sum_{{\gC_1,\gC_2,\gC_3\in\wt{\cS}\atop \gC_1\cup\gC_2\cup\gC_3\notin \wt{\cM} \text{ disj.}}} D(\gC_1,\gC_2,\gC_3)\cdot \prod_{e\in \gC_1\cup\gC_2}\xi_e\prod_{e\in \gC_3}Y_e\biggr\Vert_1\notag.
        \end{align*}
        
Now, letting $u^2_\gb:=\E Y_e^2$ using Cauchy--Schwarz inequality and the bounds in~\eqref{eq:bound on D 1} and~\eqref{eq:bound on D 2}, we get

    \begin{align*}
        \norm{\textrm{Err}_2}_1&\le \frac{1}{\sqrt{m}}+\sum_{\gC_3\in\wt{\cS}}\sqrt{(\E Y_e^2)^{\abs{\gC_3}}}\cdot\sqrt{\sum_{\gC_1,\gC_2\in\wt{\cS}\atop \gC_1\cup\gC_2\cup\gC_3\in \wt{\cM} \text{ disj.} }D(\gC_1,\gC_2,\gC_3)^2\cdot  v_\gb^{2\abs{\gC_1\cup\gC_2}}}\notag\\
       &\lesssim \frac{1}{\sqrt{m}}+\sum_{\gC_3\in\wt{\cS}}u_\gb^{\abs{\gC_3}}\cdot\sqrt{\frac{\rho^{2\abs{\gC_3}}}{m}p^{4\abs{\gC_3}}}
        \le\frac{1}{\sqrt{m}}+\sum_{k_3=1}^E{E\choose k_3}(p^{2}\cdot u_\gb\cdot\rho)^{k_3}\cdot\frac{1}{\sqrt{m}}\lesssim \frac{1}{\sqrt{m}}.
    \end{align*}
Thus
\[
    \norm{\textrm{Err}_1}_1+\norm{\textrm{Err}_2}_1 =o(1).
\]
Combining~\eqref{eq:MGF numer MMP gen diff},~\eqref{eq:MGF numer MMP gen Err1 L1} and~\eqref{eq:MGF numer MMP gen Err1 L1} we get that
\[
\norm{\E_0\exp\bigl(\theta\abs{\pi\cap\pi'}+\sum_{e\in E}\go_e\cdot (\ind_{e\in\pi}+\ind_{e\in\pi'})\bigr)-\prod_{e\in E}\left(1+2p\cdot\xi_e+p^2\cdot Y_e\right)}_1=o(1),
\]
where $Y_e=(e^\theta-1)(1+\xi_e)^2+\xi_e^2$. 
Note that
\[
1+2p\cdot\xi_e+p^2\cdot Y_e = (1+p\cdot\xi_e)^2\cdot \biggl(1+(e^\theta-1)\left(\frac{1+\xi_e}{1+p\cdot\xi_e}\right)^2p^2\biggr).
\]
Therefore, by~\eqref{eq:MGF numer MMP gen diff}, we get
    \begin{align}\label{eq:MGF numer MMP gen exp form}
        \log &\E_0\exp\biggl(\theta\abs{\pi\cap\pi'} +\sum_{e\in E}\go_e\cdot (\ind_{e\in\pi}+\ind_{e\in\pi'})\biggr)\notag\\
        &=\log\prod_{e\in E}\left(1+p\cdot\xi_e\right)^2+\sum_{e\in E}\log\biggl(1+(e^\theta-1)\biggl(\frac{1+\xi_e}{1+p\cdot\xi_e}\biggr)^2p^2\biggr)+\log(1+\op\left(1\right))\\
        &=\log\prod_{e\in E}\left(1+p\cdot\xi_e\right)^2+\sum_{e\in E}(e^\theta-1)\left(\frac{1+\xi_e}{1+p\cdot\xi_e}\right)^2p^2+\op\left(1\right),\notag
    \end{align}    
    where in the last step we use the fact that $\log(1+x)=x+o(|x|^{1+\eps})$ for any $\eps>0$ and $\op\left(1\right)$ is justified by the finiteness of the $2(1+\eps)$--th moment of $\xi_e$ (Assumption~\ref{as:weights}). Now, by the SLLN, we have that
    \begin{align*}
        &\sum_{e\in E}\left(\frac{1+\xi_e}{1+p\cdot\xi_e}\right)^2p^2=2\frac{m^2}{2E}\cdot\frac{1}{E}\sum_{e\in E}\left(\frac{1+\xi_e}{1+p\cdot\xi_e}\right)^2\to 2\gamma\E(1+\xi_e)^2 
    \end{align*}
    almost surely.
    Recalling the definition of $\xi_e$, as in~\eqref{eq:v}, we derive
    \begin{equation}\label{eq:MGF numer MMP gen slln}
    2\gc\E(1+\xi_e)^2 =2\gc e^{\psi(2\gb)-2\psi(\gb)}.
    \end{equation}

    Finally, coming back to the original quantity of interest, we conclude that
    \begin{align}\label{eq:log MGF}
        \log \gibbs{e^{\theta\abs{\pi\cap\pi'}}}_\gb&=\log\E_0\exp\left(\theta\abs{\pi\cap\pi'}+\sum_{e\in E}\go_e\cdot (\ind_{e\in\pi}+\ind_{e\in\pi'})\right)-\log\wh{Z}^2(\gb)\notag\\
       \textnormal{[by~\eqref{eq:MGF denom MMP gen},~\eqref{eq:MGF numer MMP gen exp form} and~\eqref{eq:MGF numer MMP gen slln}] } &=\log\prod_{e\in E}\left(1+p\cdot\xi_e\right)^2\exp\left(2\gc(e^\theta-1)e^{\psi(2\gb)-2\psi(\gb)}\right)\notag\\
       &\qquad\qquad\qquad-\log\prod_{e\in E}\left(1+p\cdot\xi_e\right)^2+\op(1)\notag\\
       &=2\gc(e^\theta-1) e^{\psi(2\gb)-2\psi(\gb)}+\op(1).
    \end{align}
Thus,
\begin{align}\label{eq:conv}
    \gibbs{\exp(\theta\abs{\pi\cap \pi'})}_\gb \longrightarrow \exp\biggl(2\gc(e^\theta-1)e^{\psi(2\gb)-2\psi(\gb)}\biggr)
\end{align}
in probability w.r.t.~$\pr_{\beta,\mvgo}$. Finally, we note that the r.h.s.~of equation~\eqref{eq:conv} is the moment generating function of Poisson$(2\gc  e^{\psi(2\gb)-2\psi(\gb)})$ distribution. This completes the proof.
\end{proof}

\subsection{Proof of the quenched CLT for a typical configuration}

We prove Theorem~\ref{thm:typical CLT} by the method of moment-generating functions. First, we state Lemma~\ref{lem: prod ratio}, which provides the main technical estimate for the proof.

\begin{lem}\label{lem: prod ratio}
    Under Assumptions~\ref{as:Graph}--\ref{as:optimization} for any $\gb\in [0,\infty)$ and $\theta\in \bR$ we have that
    \[
    \prod_{e\in E}\frac{1+p\xi_e\left(\gb+\theta/\sqrt{m} \right)}{1+p\xi_e(\gb)} \longrightarrow 1
    \]
    in probability.
\end{lem}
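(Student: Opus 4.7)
Take logarithms and set $f(s) := \sum_{e \in E} \log(1 + p\xi_e(s))$, so that the claim reduces to $f(\gb + \theta/\sqrt{m}) - f(\gb) \to 0$ in probability. Two initial observations underpin the analysis. First, since $\xi_e(s) \ge -1$, one has $1 + p\xi_e(s) \ge 1 - p$ uniformly in $e$ and $s$, which is bounded away from zero for $m$ large. Second, $s \mapsto \xi_e(s)$ is smooth, with derivatives
\[
\xi_e'(s) = -(\go_e + \psi'(s))(1 + \xi_e(s)), \qquad \xi_e''(s) = \left[(\go_e + \psi'(s))^2 - \psi''(s)\right](1 + \xi_e(s)),
\]
and both $\E \xi_e'(\gb) = 0$ and $\E \xi_e''(\gb) = 0$, these being the first two derivatives at $s = \gb$ of the identity $\E(1+\xi_e(s)) = 1$.

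Set $h := \theta/\sqrt{m}$ and apply first-order Taylor to $f$:
\[
f(\gb+h) - f(\gb) = h\, f'(\gb) + \tfrac{h^2}{2}\, f''(\gb^\ast)
\]
for some $\gb^\ast \in [\gb,\gb+h]$. Since $h \to 0$, it is enough to show that $f'(\gb) = O_p(1)$ and $\sup_{s \in [\gb,\gb+h]} |f''(s)| = O_p(1)$.

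For $f'(\gb) = \sum_e p\xi_e'(\gb)/(1+p\xi_e(\gb))$, expand the denominator as a geometric series and use $\E\xi_e'(\gb) = 0$ to obtain $\E[p\xi_e'(\gb)/(1+p\xi_e(\gb))] = -p^2\, \E[\xi_e'(\gb)\xi_e(\gb)] + O(p^3)$, which is $O(p^2)$. Summing over the $|E|$ edges and using $|E|p^2 = m^2/|E| \to 2\gc$ gives $\E f'(\gb) = O(1)$. The uniform lower bound $1+p\xi_e \ge 1-p$ together with the finite second moment $\E(\xi_e'(\gb))^2 < \infty$ (which amounts to $\E \go_e^2 e^{-2\gb\go_e} < \infty$, ensured by Assumption~\ref{as:weights}) show that each summand has variance $O(p^2)$, so by independence $\var f'(\gb) = O(|E| p^2) = O(1)$ and hence $f'(\gb) = O_p(1)$ by Chebyshev's inequality.

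A parallel computation controls $f''(s)$: the identity $\E \xi_e''(\gb) = 0$ produces the same leading cancellation, and the uniform bound on $1+p\xi_e$ together with finite second moments of $\xi_e''(\gb)$ yield $f''(\gb) = O_p(1)$. To upgrade to the required uniform bound over $[\gb,\gb+h]$, I expect $|f''(s) - f''(\gb)| \le h \cdot \sup |f'''|= O_p(h)$ by one further round of the same Taylor-plus-moment argument applied to $f'''$; since $h \to 0$, this suffices. The main technical subtlety I anticipate is verifying that the tilted polynomial moments of $\go_e$ remain finite in a small neighborhood of $\gb$, but this follows from Assumption~\ref{as:weights} since $\psi$ is finite on all of $(0,\infty)$. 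Combining, $f(\gb+h)-f(\gb) = h\cdot O_p(1) + h^2 \cdot O_p(1) = o_p(1)$, which is the desired conclusion.
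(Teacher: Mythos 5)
Your proof takes a genuinely different route from the paper's. The paper applies the elementary Lipschitz bound $\abs{\log(1+x)-\log(1+y)}\le \abs{x-y}/(1+\min(x,y))$ together with $\xi_e\ge -1$ directly to each summand of $\sum_{e}\bigl[\log\left(1+p\xi_e(\gb+h)\right)-\log\left(1+p\xi_e(\gb)\right)\bigr]$ with $h=\theta/\sqrt m$, bounding it by $\frac{m}{E-m}\abs{\xi_e(\gb+h)-\xi_e(\gb)}$, then invokes independence for an $L^2$ estimate and shows $\norm{\xi_e(\gb+h)-\xi_e(\gb)}_2\to0$ via a Taylor expansion in the weight variable and dominated convergence. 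You instead Taylor-expand $f(s)=\sum_e\log(1+p\xi_e(s))$ in $s$ around $\gb$ and exploit the derivative cancellations $\E\xi_e'(\gb)=\E\xi_e''(\gb)=0$ (coming from $\E(1+\xi_e(s))\equiv1$) to show $f'(\gb)$ and $f''(\gb)$ are $O_p(1)$. The underlying source of cancellation is the same---both proofs ultimately use $\E\xi_e(s)=0$ for every $s$---but your decomposition handles the mean and variance contributions separately and makes more visible that the $O_p(1)$ bound on $f'(\gb)$ uses exactly the fact that $\abs{E}p^2\to2\gc$.

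Two wrinkles need fixing. First, expanding $1/(1+p\xi_e)$ as a geometric series is not justified since $\xi_e$ is unbounded above; write $\frac{1}{1+p\xi_e}=1-\frac{p\xi_e}{1+p\xi_e}$ instead and use the uniform lower bound $1+p\xi_e\ge1-p$. Second, the step upgrading $f''(\gb)=O_p(1)$ to $\sup_{s\in[\gb,\gb+h]}\abs{f''(s)}=O_p(1)$ via ``one further round on $f'''$'' does not close as stated: the same supremum issue reappears one order up and you would need $\E\xi_e'''(\gb)=0$ together with yet another uniformity argument. The cleanest fix is to realize you do not need a sharp bound there. A crude pointwise count $\sup_{s\in[\gb,\gb+h]}\abs{f'''(s)}=O_p(\abs{E}\,p)=O_p(m)$ (finiteness of $\E\sup_s\abs{\xi_e^{(j)}(s)}$ on a compact $s$-interval is guaranteed by Assumption~\ref{as:weights}) already gives $h^3\sup_s\abs{f'''}=O_p\bigl(\theta^3/\sqrt m\bigr)=\op(1)$, so a third-order Taylor expansion, with the first two terms controlled as you did and the third by this trivial bound, closes the argument. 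Alternatively, write $f(\gb+h)-f(\gb)=\int_\gb^{\gb+h}f'(s)\,ds$ and move the expectation inside the integral: then only $\sup_{s}\E\abs{f'(s)}=O(1)$ on a compact interval around $\gb$ is required, and your first-derivative moment computation supplies this uniformly.
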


\begin{proof}
    By taking the logarithm of the product, we get that
    \begin{align*}
        \log\prod_{e\in E}\frac{1+p\cdot\xi_e\left(\gb+{\theta}/{\sqrt{m}}\right)}{1+p\cdot \xi_e(\gb)}
        &=\sum_{e\in E}\left[\log\left( 1+p\cdot\xi_e\left(\gb+{\theta}/{\sqrt{m}}\right)\right)-\log\left(1+p\cdot\xi_e(\gb)\right)\right]\notag\\
        &\le \frac{m}{E-m}\cdot\sum_{e\in E}\abs{\xi_e\left(\gb+{\theta}/{\sqrt{m}}\right) -\xi_e(\gb)}.
    \end{align*}
    The last inequality follows from the fact that $\abs{\log(1+x)-\log(1+y)}\le \abs{x-y}/(1+\min(x,y))$ and $\xi_e\ge -1$ almost surely.
    Taking the $L^2$ norm and using independence, we get that
    \begin{align}
        \norm{\log\prod_{e\in E}\frac{1+p\cdot\xi_e\left(\gb+{\theta}/{\sqrt{m}}\right)}{1+p\cdot \xi_e(\gb)}}_2
        \lesssim\norm{\xi_e\left(\gb+\theta/\sqrt{m}\right) -\xi_e(\gb)}_2.
    \end{align}
    Recall that $\xi_e=e^{-\gb\go_e-\psi(\gb)}-1$. Thus, Taylor expansion and DCT yield that
    \begin{align*}
        &\norm{\xi_e\left(\gb+\theta/\sqrt{m}\right) -\xi_e(\gb)}_2^2\\
        &=\E\left(e^{-\gb\go_e-\psi(\gb)}
        \left(e^{-\theta \go_e/\sqrt{m}-\psi(\gb+\theta/\sqrt{m})+\psi(\gb)}-1\right)\right)^2=o(1).
    \end{align*}
    This completes the proof.
\end{proof}

\begin{proof}[Proof of Theorem~\ref{thm:typical CLT}]
    Consider the moment-generating function 
    \begin{align}\label{eq:char func}
        \gibbs{\exp\left(\frac{\theta}{\sqrt{m}}\left(W(\pi)+m\psi'(\gb)\right)\right)}_{\gb,\go}&=e^{-\frac{\theta}{\sqrt{m}}m\psi'(\gb)}\cdot\frac{Z\left(\gb+\theta/\sqrt{m} \,,\,\go\right)}{Z(\gb,\go)},
    \end{align}
    for $\theta\in\dR$.
    By Lemma~\ref{lem:Zhat} we know that  
    \[
    Z(\gb,\go)=e^{m\psi(\gb)}\wh{Z}(\gb,\go)=e^{m\psi(\gb)}\left(\prod_{e\in E}1+p\cdot\xi_e\right)(1+\op(1)).
    \]
     Using this representation of the partition function we get
    \begin{align}
       ~\eqref{eq:char func}=e^{m\left(\psi\left(\gb+\frac{\theta}{\sqrt{m}}  \right)-\psi(\gb)-\frac{\theta}{\sqrt{m}} \psi'(\gb)\right)}\cdot \frac{\prod_{e\in E}\left(1+p\cdot\xi_e\left(\gb+\theta/\sqrt{m} \right)\right)(1+\op(1))}{\prod_{e\in E}\left(1+p\cdot\xi_e\right)(1+\op(1))}.\notag
    \end{align}
    Applying Lemma~\ref{lem: prod ratio} we have that the ratio of products tends to $1$ in probability, while using Taylor expansion inside of the exponent we derive
    \[
    \psi\left(\gb+\frac{\theta}{\sqrt{m}}  \right)-\psi(\gb)-\frac{\theta}{\sqrt{m}} \psi'(\gb)=\frac{\theta^2}{2m}\psi''(\gb)+\op(1).
    \]
    Thus the limit in probability of the moment-generating function of $W(\pi)$~\eqref{eq:char func} is  equal to $
    e^{\frac{\theta^2}{2}\psi''(\gb)},
    $
    which is that of a normal random variable with mean zero and variance $\psi''(\gb)$.
\end{proof}

\subsection{Proof of the CLT for the Gibbs average}
Before proving the central limit theorem for the Gibbs average, Theorem~\ref{thm:gibbs CLT}, we first present another expansion for $\wh{Z}_n(\gb)$ similar to that in Lemma~\ref{lem:Zhat}.

\begin{lem}\label{lem:Zhatv2}
Under Assumptions~\ref{as:Graph}--\ref{as:optimization} for any $\gb\in [0,\infty)$, for each edge $e\in E$ define 
\begin{equation}\label{eq:ve}
    V_e:=\left(-\go_e-\psi'(\gb)\right)(1+\xi_e(\gb)),
\end{equation}
Then
\begin{equation}\label{eq:Zhatv2}
\sum_{e\in E} V_e \sum_{\gC\subseteq E\setminus\{e\}}\pr(\gC\cup\{e\}\subseteq \pi)\xi_\gC(\gb)-\wh{Z}(\gb)\cdot\sum_{e\in E}p(1-p\xi_e(\gb))V_e = \op(1).
\end{equation}
\end{lem}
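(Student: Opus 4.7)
The plan is to mimic the cluster expansion strategy of Lemma~\ref{lem:Zhat}, replacing $\pr(\gC\cup\{e\}\subseteq \pi)$ by $p^{|\gC|+1}$ throughout and controlling the error in $L^{2}$. The key observation is that differentiating $\xi_e(\gb)=e^{-\gb\go_e-\psi(\gb)}-1$ in $\gb$ yields exactly $V_e$, so $\E V_e=\partial_\gb\E\xi_e=0$, the family $(V_e)_{e\in E}$ is i.i.d.\ with finite second moment, and each $V_e$ is a function of $\go_e$ alone.

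First, I will apply Lemma~\ref{lem:Zhat} to replace $\wh{Z}(\gb)$ by $\prod_{f\in E}(1+p\xi_f)$ at $L^2$ cost $O(m^{-1/2})$, and use the a priori bound $\norm{\sum_{e}p(1-p\xi_e)V_e}_{2}=O(1)$ (which follows from $Ep^{2}\to 2\gc$ and the i.i.d.\ structure) together with Cauchy--Schwarz to obtain
\[
\wh{Z}(\gb)\sum_{e\in E}p(1-p\xi_e)V_e=\Bigl(\prod_{f\in E}(1+p\xi_f)\Bigr)\sum_{e\in E}p(1-p\xi_e)V_e+\op(1).
\]
The identity $(1+p\xi_e)(1-p\xi_e)=1-p^{2}\xi_e^{2}$ together with a direct $L^{2}$ estimate on the resulting $p^{3}$-sum will further reduce the main factor to $\sum_{e\in E}pV_e\prod_{f\ne e}(1+p\xi_f)$. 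Expanding the product then gives
\[
\sum_{e\in E}pV_e\prod_{f\ne e}(1+p\xi_f)=\sum_{e\in E}V_e\sum_{\gC\subseteq E\setminus\{e\}}p^{|\gC|+1}\xi_\gC,
\]
so that it suffices to show
\[
D:=\sum_{e\in E}V_e\sum_{\gC\subseteq E\setminus\{e\}}\bigl[\pr(\gC\cup\{e\}\subseteq\pi)-p^{|\gC|+1}\bigr]\xi_\gC=\op(1).
\]

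To estimate $\E D^{2}$, I will expand the square and exploit that $V_e$, $V_{e'}$ depend only on $\go_e$, $\go_{e'}$ while each $\xi_f$ has mean zero. A pairing argument in the $\go$-variables reduces the nonzero contributions to two types: (a) the diagonal $e=e'$, $\gC=\gC'$, contributing $\E V_e^{2}\cdot v_\gb^{2|\gC|}$; and (b) the off-diagonal with $e\ne e'$, $e\in\gC'$, $e'\in\gC$, and $\gC\triangle\gC'=\{e,e'\}$, contributing $(\E V_e\xi_e)^{2}\cdot v_\gb^{2(|\gC|-1)}$, where the constant $\E V_e\xi_e=(\psi'(2\gb)-\psi'(\gb))e^{\psi(2\gb)-2\psi(\gb)}$ is finite. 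In each case I split the ``large'' cluster $\gC\cup\{e\}$ (respectively $\{e,e'\}\cup S$ in case~(b)) into three pieces exactly as in the proof of Lemma~\ref{lem:Zhat}: partial matchings controlled by Assumption~\ref{as:prod of edges}, which provides $|\pr(\gC\cup\{e\}\subseteq\pi)-p^{|\gC|+1}|\le p^{|\gC|+1}\rho^{|\gC|+1}/\sqrt{m}$; non-matching subgraphs in $\wt{\cS}$ controlled by Assumption~\ref{as:non matching}; and subgraphs outside $\wt{\cS}$, where $\pr=0$ and the remaining $p^{|\gC|+1}$ contribution is dominated as in~\eqref{eq:dif gen not subseteq} by summing over $\wt{\cM}^{c}$. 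Using $Ep^{2}\to 2\gc$, each piece contributes at most $O(m^{-1/2})$ to $\E D^{2}$.

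The main obstacle will be case~(b): each off-diagonal pair $(\gC,\gC')$ consists of a common block $S$ of size $|\gC|-1$ together with two ``swapped'' edges $e,e'$, so the combinatorics is richer than the diagonal case and mirrors the three-cluster decomposition appearing in the proof of Theorem~\ref{thm:poi limit}. Once the matching/non-matching trichotomy is applied and Assumptions~\ref{as:prod of edges}--\ref{as:non matching} invoked term by term, the resulting series in $Ep^{2}v_\gb^{2}$ are geometric and convergent, so the sum over $e,e'$ absorbs the extra $E^{2}$ prefactor and one arrives at $\E D^{2}=o(1)$, which gives the desired $\op(1)$ convergence in~\eqref{eq:Zhatv2}.
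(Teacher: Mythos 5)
Your argument is correct, and it follows a cleaner but genuinely different route than the paper's own proof. The paper expands $\wh Z(\gb)=\sum_{\gC}\pr(\gC\subseteq\pi)\xi_\gC$ directly, reindexes, and regroups the difference into three error terms involving the ``discrete derivative'' $A(\gC,e)=\pr(\gC\subseteq\pi)-p\,\pr(\gC\setminus\{e\}\subseteq\pi)$, which are then bounded separately in $L^2$ and $L^1$ (with the additional wrinkle that $V_e\xi_e$ is recentred by $\mu_\gb$ before the $L^2$ estimate). You instead first invoke Lemma~\ref{lem:Zhat} to substitute $\wh Z(\gb)\approx\prod_f(1+p\xi_f)$, use the algebraic identity $(1+p\xi_e)(1-p\xi_e)=1-p^2\xi_e^2$ to peel off a negligible $p^3$-correction, and thereby reduce the whole statement to a single centered remainder $D$ in which $\pr(\gC\cup\{e\}\subseteq\pi)$ is compared to the full product approximation $p^{|\gC|+1}$ rather than to $p\,\pr(\gC\setminus\{e\}\subseteq\pi)$. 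Both routes ultimately exploit the same orthogonality: $\E V_e=0$ with $V_e,\xi_e$ both $\go_e$-measurable, so that $\E D^2$ has only a diagonal piece ($e=e'$, $\gC=\gC'$, weight $\E V_e^2\,v_\gb^{2|\gC|}$) and a ``swap'' piece ($e\ne e'$, $\gC\triangle\gC'=\{e,e'\}$, weight $\mu_\gb^2 v_\gb^{2(|\gC|-1)}$), and both are then dispatched by the matching/non-matching/outside-$\wt\cS$ trichotomy exactly as in Lemma~\ref{lem:Zhat}. Your version has the advantage of being more modular (it reuses Lemma~\ref{lem:Zhat} wholesale and produces a single remainder $D$ instead of three ad hoc Err terms), at the modest cost of having to justify the preliminary $\wh Z\to\prod(1+p\xi_f)$ substitution against the $O(1)$-in-$L^2$ multiplier $\sum_e p(1-p\xi_e)V_e$ via Cauchy--Schwarz, which you do. One small inaccuracy: the per-piece contributions to $\E D^2$ are actually $O(m^{-1})$, not $O(m^{-1/2})$ as you state, but that is only a stronger bound and does not affect the conclusion.
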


Using Lemma~\ref{lem:Zhatv2} the proof of Theorem~\ref{thm:gibbs CLT} reduces to classical probabilistic limit theorems for independent random variables. 

\begin{proof}[Proof of Theorem~\ref{thm:gibbs CLT}]
Recall that
$
\frac{\partial}{\partial \gb} \log Z(\gb,\mvgo)=\gibbs{-W(\pi)}_\gb
$
and notice that the Gibbs average can be rewritten as
\begin{align}\label{eq:gibbsave}
\gibbs{W(\pi)}_\gb+m\psi'(\gb)&=\frac{\sum_{\pi\in\cS} (W(\pi)+m\psi'(\gb))\exp(\gb\cdot W(\pi)+m\psi(\gb))}{\wh{Z}(\gb)}.
\end{align}
Now we consider the numerator of~\eqref{eq:gibbsave}
    \begin{align*}
        &\E_0\bigl(\sum_{e\in E}\left(-\go_e-\psi'(\gb)\right) \1_{e\in\pi}\cdot\prod_{f\in E}\left(1+\xi_e\cdot\1_{f\in\pi}\right)\bigr)\notag\\
        &=\sum_{e\in E}V_e\cdot\sum_{\gC\subseteq E\setminus\{e\}}\pr(\gC\cup e \subseteq \pi)\xi_\gC
        =\wh{Z}(\gb)\cdot\sum_{e\in E}p(1-p\xi_e)V_e\cdot(1+\op(1))
    \end{align*}
where $V_e$ as in \eqref{eq:ve} and where we used Lemma~\ref{lem:Zhatv2} in the last equality. Combining this with \eqref{eq:gibbsave} we get

\begin{align*}
    \gibbs{W(\pi)}_\gb+m\psi'(\gb)
    &=\sum_{e\in E}p(1-p\xi_e)V_e+\op(1)\\
    &=p\sum_{e\in E} V_e -p^2\sum_{e\in E} V_e\,\xi_e+\op(1)\\
    &=\sqrt{\frac{m^2}{E^2}}\sum_{e\in E} V_e-\frac{m^2}{E}\frac{1}{E}\sum_{e\in E} V_e\,\xi_e+\op(1)
    \Rightarrow \N(0,2\gc\E V_e^2)-2\gc\E V_e\,\xi_e,
\end{align*}
where we used CLT for independent random variables alongside the SLLN. Recall that $\E e^{-\gb\go}=e^{\psi(\gb)}$ and thus $\E -\go e^{-2\gb\go}=\psi'(2\gb)e^{\psi(2\gb)}$. Using that we compute the expectations that appear above,
\begin{align*}
\E V_e^2 &=\E((-\go_e-\psi'(\gb))^2\cdot e^{-2\gb\go_e-2\psi(\gb)})=\left[ (\psi'(2\gb)-\psi'(\gb))^2+\psi''(2\gb)\right]\cdot e^{\psi(2\gb)-2\psi(\gb)},    
\end{align*}
and
\begin{align*}
\E V_e\,\xi_e &=\E(-\go_e-\psi'(\gb))\cdot e^{-2\gb\go_e-2\psi(\gb)}
=(\psi'(2\gb)-\psi'(\gb))\cdot e^{\psi(2\gb)-2\psi(\gb)}.
\end{align*}
This completes the proof.
\end{proof}

We end this section with the proof of Lemma~\ref{lem:Zhatv2}.

\begin{proof}[Proof of Lemma~\ref{lem:Zhatv2}]
Recall that $\E V_e=0$ and that $\wh{Z}_n(\gb)=\sum_{\gC\subseteq E}\pr(\gC\subseteq \pi)\xi_\gC.$
Thus, the difference inside of the square in~\eqref{eq:Zhatv2} can be rewritten as
    \begin{align}\label{eq:lem52}
        \sum_{e\in E} V_e &\sum_{\gC\subseteq E\setminus\{e\}}\pr(\gC\cup\{e\}\subseteq \pi)\xi_\gC-\sum_{e\in E}p(1-p\xi_e)V_e\sum_{\gC\subseteq E}\pr(\gC\subseteq \pi)\xi_\gC\\
        =\sum_{e\in E} V_e &\Biggl[\sum_{\gC\subseteq E\atop e\in \gC}\pr(\gC\subseteq \pi)\xi_{\gC\setminus\{e\}}-p\xi_e\sum_{\gC\subseteq E\atop e\in \gC}\pr(\gC\subseteq \pi)\xi_{\gC\setminus\{e\}}\notag\\
        &-p\sum_{\gC\subseteq E\atop e\notin \gC}\pr(\gC\subseteq \pi)\xi_{\gC\setminus\{e\}}+p^2\xi_e^2\sum_{\gC\subseteq E\atop e\notin \gC}\pr(\gC\subseteq \pi)\xi_{\gC\setminus\{e\}}+ p^2\xi_e^2\sum_{\gC\subseteq E\atop e\in \gC}\pr(\gC\subseteq \pi)\xi_{\gC\setminus\{e\}}\Biggr].\notag
    \end{align}
Regrouping the terms yields that
    \begin{align}
        \textnormal{\eqref{eq:lem52}}=\sum_{e\in E}\sum_{\gC\subseteq E\atop e\in \gC}&\Biggl[\left(\pr(\gC\subseteq \pi)-p\pr(\gC\setminus\{e\}\subseteq \pi)\right) V_e\,\xi_{\gC\setminus\{e\}}\label{eq:Zhatv2 Err1}\\
        &-p\left(\pr(\gC\subseteq \pi)-p\pr(\gC\setminus\{e\}\subseteq \pi)\right) V_e\xi_e\, \xi_{\gC\setminus\{e\}}\label{eq:Zhatv2 Err2}\\
        &\qquad+ p^2\xi_e^2 V_e \pr(\gC\subseteq \pi)\,\xi_{\gC\setminus\{e\}}\Biggr].\label{eq:Zhatv2 Err3}
    \end{align}
    We now show that \eqref{eq:Zhatv2 Err1} and \eqref{eq:Zhatv2 Err2} is asymptotically negligible in $L_2$. 
    To simplify the notation further, let 
    \[
    A(\gC,e):=\pr(\gC\subseteq \pi)-p\pr(\gC\setminus\{e\}\subseteq \pi).
    \]
    We first break term~\eqref{eq:Zhatv2 Err1} into two cases depending on whether $\gC$ is a partial matching or not, akin to the way we did it in the proof of Lemma~\ref{lem:Zhat}. In the former case, the fact that $\gC\in \wt{\cM}$ implies that $\gC\setminus\{e\}\in \wt{\cM}$ and thus Assumption~\ref{as:prod of edges} yields that $\abs{A(\gC,e)}^2\le \rho^{\abs{\gC}}\cdot p^{2\abs{\gC}+1}$ for some constant $\rho\in(0,\infty)$.
    
    Although the analysis here is similar to the one presented in the proof of Lemma~\ref{lem:Zhat}, there is one important difference. Namely that $V_e$ and $\xi_{\gC\setminus\{e\}}$ are uncorrelated only for a fixed edge $e$. Therefore, additional care is required for the cross terms.
    \begin{align}
    \norm{\eqref{eq:Zhatv2 Err1}}_2^2 &=\biggl\Vert\sum_{e\in E}\sum_{\gC\in \cM, e\in \gC}A(\gC,e) V_e\,\xi_{\gC\setminus\{e\}}\biggr\Vert_2^2
    =\sum_{e\in E}\sum_{\gC\in \cM\atop e\in \gC}A(\gC,e)^2\E V_e^2\,\xi_{\gC\setminus\{e\}}^2\label{eq:Zhatv2 Err1_1}\\
    &\qquad\qquad\qquad\qquad+\sum_{e\in E, \gC\in \cM\atop e\in \gC}\sum_{e'\in E, \gC'\in \cM\atop e\in \gC}A(\gC,e)A(\gC',e')\E V_eV_{e'}\,\xi_{\gC\setminus\{e\}}\xi_{\gC'\setminus\{e'\}}.\label{eq:Zhatv2 Err1_2}
    \end{align}
    The first part, \eqref{eq:Zhatv2 Err1_1}, is of order $O(1/m)$ by an analogous computation to those in \eqref{eq:dif gen subseteq}. To bound the second part, the cross terms, we notice that the only time the expectation inside of \eqref{eq:Zhatv2 Err1_2} is nonzero is when $\gC=\gC'$ and $e\neq e'$. Even though for each $\gC$ there would be $\abs{\gC}\cdot (\abs{\gC}-1)$ such terms, the exact computations apply again, yielding an upper bound of order $O(m^{-1})$.
    
    The case when $\gC\notin \wt{\cM}$ is treated similarly, with the only difference that now there are fewer choices for picking such a cluster. So, adopting a similar analysis to that in \eqref{eq:dif gen not subseteq}, we retrieve an upper bound of order $O(m^{-1})$. Thus, $\norm{\eqref{eq:Zhatv2 Err1}}_2=o(1).$

    Now, we focus on the term~\eqref{eq:Zhatv2 Err2}. The first difficulty is that $\E V_e\xi_e$ need not be mean zero; hence, we add and subtract its mean.
    \[
    V_e\xi_e\xi_{\gC\setminus\{e\}}= (V_e\xi_e-\mu_\gb)\xi_{\gC\setminus\{e\}} +\mu_\gb\xi_{\gC\setminus\{e\}},
    \]
    where $\mu_\gb:=\E V_e\,\xi_e$.
    This results in~\eqref{eq:Zhatv2 Err2} being separated into two parts
    \begin{align}
    \norm{\eqref{eq:Zhatv2 Err2}}_2^2&\le p^2\biggl\Vert\sum_{e\in E}\sum_{\gC\subseteq E\atop e\in \gC}
    A(\gC,e) (V_e\xi_e-\mu_\gb)\xi_{\gC\setminus\{e\}} \biggr\Vert_2^2\label{eq:Zhatv2 Err2_1}\\
    &\qquad\qquad +(p\mu_\gb)^2\biggl\Vert\sum_{e\in E}\sum_{\gC\subseteq E\atop e\in \gC}  A(\gC,e) \xi_{\gC\setminus\{e\}} \biggr\Vert_2^2\label{eq:Zhatv2 Err2_2}.
    \end{align}
    
    The first part, \eqref{eq:Zhatv2 Err2_1}, is treated in exactly the same way as the term~\eqref{eq:Zhatv2 Err1} as it features a product of centered and, for a fixed edge $e$, uncorrelated random variables.
    The second part, \eqref{eq:Zhatv2 Err2_2}, is slightly more subtle. The analysis is still similar; however, since edge $e$ does not play a role inside the sum, we can rewrite it as a sum over clusters $\wh{\gC}$ that do not contain edge $e$. This results in an extra factor of $(E-|\wh{\gC}|)$. However, since $p^2=\Theta(E^{-1})$, the extra factor of $p^2$ in front of \eqref{eq:Zhatv2 Err2_2} cancels out that contribution. Indeed, in case when the cluster is a partial matching 
    \begin{align*}
        \eqref{eq:Zhatv2 Err2_2}
        &=(p\mu_\gb)^2\norm{\sum_{\wh{\gC}\in \wt{\cM}, e\notin \wh{\gC}}  A\left(\wh{\gC}\cup e,e\right)\, \xi_{\wh{\gC}} }_2^2\\
        &=(p\mu_\gb)^2\sum_{k=0}^{E-1}\sum_{\wh{\gC}\in \wt{\cM}\atop \abs{\wh{\gC}}=k}\sum_{e\notin \wh{\gC}} A\left(\wh{\gC}\cup e,e\right)^2v_\gb^{2k}\\
        &\le (p\mu_\gb)^2\sum_{k=0}^{E-1} {E\choose k}(E-k)^2p^{2(k+1)+1}v_\gb^{2k}\rho^k
        =O(E^{-1}),
    \end{align*}
    where in the second to last inequality we used Assumption~\ref{as:prod of edges}~for cluster $\wh{\gC}\cup e$, which is of the size $k+1$. The case when $\wh{\gC}\notin \wt{\cM}$ is treated in a similar fashion to before, relying on the Assumption~\ref{as:non matching}~and the fact that there are fewer choices to pick such a cluster.
    Thus, we conclude that $\norm{\eqref{eq:Zhatv2 Err2}}_2=o(1).$

    Finally, we show the last term~\eqref{eq:Zhatv2 Err3} is small in $L_1$ via a direct computation
    \begin{align*}
        \norm{\eqref{eq:Zhatv2 Err3}}_1 &\le p^2\sum_{e\in E}\norm{\xi_e^2 V_e}_1 \cdot \biggl\Vert\sum_{\gC\subseteq E, e\in \gC}\pr(\gC\subseteq \pi)\xi_{\gC\setminus\{e\}}\biggr\Vert_2.
    \end{align*}
    By the same argument as before 
    \[
\biggl\Vert\sum_{\gC\subseteq E,e\in \gC}\pr(\gC\subseteq \pi)\xi_{\gC\setminus\{e\}}\biggr\Vert_2=O(E^{-1}).
    \]
    Recall that $p=m/E=O(E^{-1/2})$. Combining that with the bound about, we get that $\norm{\eqref{eq:Zhatv2 Err3}}_1=O(E^{-1/2})$. This completes the proof.
\end{proof}

\begin{proof}[Proof of Theorem~\ref{thm:log part cltp}]
The proof of the result follows similar steps to those in the proof of Theorem~\ref{thm:log part clt}. The only distinction in the analysis is that one works with each block $E_{s,t}$ separately.  Namely, one can derive the expansion analogous to the one in Lemma~\ref{lem:Zhat}
\[
\E \bigl|\prod_{s\le t}\prod_{e\in E_{s,t}}\left(1+p_{s,t}\xi_e(\gb)\right)-\wh{Z}(\gb)\bigr|^2\lesssim \frac{1}{\max_{s\le t}m_{s,t}}\approx \frac{1}{n}.
\]
\end{proof}

\section{Closing remarks and further work}\label{sec future}
In this section, we outline several possible directions for future research, emphasizing interesting by-products of our results and techniques. 

\subsection{Directed polymer model in the intermediate disorder regime}\label{sec:polymer}
In Section~\ref{intro cluster}, we noted that several works (see, for example, \cite{AlbertsKhaninQuastel,CaravennaSunZygouras,DZ16}) studying the directed polymer model in dimensions $(1+1)$ and $(2+1)$ under the intermediate disorder regime encountered an obstacle similar to the one we faced when applying the chaos expansion. Although these authors overcame it by requiring $\beta_n$ to tend to zero sufficiently fast, we believe that our techniques could also be applied in their setting. Higher dimensions $(d+1)$ with $d \ge 3$ have likewise been investigated in \cite{CometsYoshida} using the martingale method. Here too, we expect that our approach could be adapted to yield an alternative, combinatorial proof of their results.

\subsection{The minimal star and the minimal constellation problems}\label{sec:star}
    This problem highlights a potential necessity of our Assumption~\ref{as:matching},~which requires that any partial matching can be extended to a valid configuration, \ie\ an element of $\cS$, in an optimization problem that we consider. Moreover, we crucially relied that matchings, in a sense, have a dominant contribution in any configuration, which weakens the dependence between the edges and enables CLT. A natural next question is: \textit{In the setting of the present paper, for fixed or growing parameter $\beta$, is this assumption necessary for the Gaussian limiting distribution in random optimization problems?}

    We believe that at least an approximate version is needed; in other words, one has to assume that for any partial matching of size up to some $\kappa(n)$, that tends to infinity as $n$ grows, there has to be a configuration in $\cS$ that contains it. In the case of Assumption~\ref{as:matching} we take the largest $\kappa(n)=\abs{V}/2$. 

    In fact, a family of optimization problems might exhibit a phase transition in terms of $\kappa(n)$. For example, consider the following problem, which we call \textit{the Minimal Star Problem}.
    
    Let $G$ be the complete graph on $n$ vertices and $\{\go_e\}_{e\in E}$ be an i.i.d.~sequence of $\Exp(1)$ random variables. Suppose $\cS$ is the collection of stars, \ie~the subgraphs formed by selecting a single vertex together with all of the edges adjacent to it. Notice that $\cS$ does not satisfy Assumption~\ref{as:matching}, because only matching of size $\kappa(n)\equiv 1$ can be extended to a valid configuration. As before, define the weight of a configuration (star) to be the sum of the weights on its edges and denote it by
    \[
        S_i=\sum_{i\in[n]} \go_{ij}
    \]
    and the minimal weight by $S_{\min}:=\min_{i\in [n]}S_i.$

    Notice that the weight of each star is a sum of $(n-1)$ i.i.d.~random variables and any two stars intersect in exactly one edge. Thus, one expects that for suitable parameters $b$ and $m$
    \[
    \sqrt{\frac{2\log n}{n-1}}\cdot (S_{\min}-n+1)+2\log n\Rightarrow \textnormal{Gumbel}(m,b).
    \]

    From the minimal star problem, it is easy to come up with a whole family of optimization problems, such that a valid configuration contains only partial matching of sizes not exceeding $\kappa\in[n]$. For example, consider selecting $\kappa$ vertices and consider the union of vertex-disjoint stars centered at these vertices, each containing $(n-\kappa)/\kappa$-many leaves. We call such graphs \textit{constellations}, thus leading to the name \textit{the minimal ($\kappa$-)constellation problem}. Notice that when $\kappa=1$, this problem reduces to the minimal star problem described above, while when $\kappa=n/2$, it reduces to MMP. Since in the former case, one expects Gumbel limiting distribution, while in the latter case, Gaussian, it is natural to ask the following problem.

\begin{problem}\label{prob:star}
    Find $\kappa_c$ such that for any $\kappa(n)<\kappa_c$ the limiting distribution of appropriately scaled and centered weight of the minimal  $\kappa$-constellation is Gumbel, and $\kappa_c>\kappa(n)$ it is Gaussian. 
\end{problem}

This problem makes sense for both versions: true optimization as well as Gibbs version with fixed or growing inverse temperature parameter $\gb$. In the case of true optimization, Gaussian limiting behavior for MMP is still open~\cite[Conjecture 1.1]{wastlund2005variance}.

\subsection{First passage percolation on the complete graph}

    Throughout this paper we assumed that one optimizes over a family of spanning graphs with the same size of the edge set equal to $m=\Theta(\abs{V})$, see  Assumption~\ref{as:kappa}.
    In fact, we used it to establish that $\E Z(\gb)=e^{m\cdot \psi(\gb)}.$ However, many optimization problems of interest do not have this quality. For example, one might be interested in the path of the smallest weight connecting two distinct vertices in a graph, (see~\cite{RemcoFPP,JansonFPP,ShankarRemcoFPPHop} among others). In the literature, this problem is referred to as the first passage time or chemical distance between two points. Let $G$ be the complete graph on $n$ vertices and $\{\go_e\}_{e\in E}$ be an i.i.d.~sequence of $\Exp(1)$ random variables. Suppose $\cS$ is the collection of all paths connecting two distinguished vertices. Clearly, they are not all of the same size. Moreover, such configurations are not spanning. Thus, this problem is far from the scope of the current paper. In~\cite{ShankarRemcoFPPHop}, Bhamidi and van der Hofstad established that the number of edges in the optimal path, known as the \textit{hop-count}, has mean and variance of order $\log(n)$ and obeys CLT. They also established that properly resealed and centered total weight of the optimal path is asymptotically Gumbel. It is natural to ask how limiting behavior changes if one imposes additional conditions on the allowed lengths (hop counts) of the paths. For instance, if paths have to be of length $(n-1)$ then the problem reduces to TSP and, in the case of fixed temperatures, is covered by our result. This suggests that there might be a phase transition similar to the one mentioned in Problem~\ref{prob:star}.

\subsection{From fixed temperature to true optimization}
In this article, we considered an auxiliary Gibbs measure, with a fixed inverse temperature parameter, on the set of all subgraphs over which one is optimizing. Thus, the resulting process still retains a complicated dependence structure of the original optimization problem but is merely biased towards the typical weight. Naturally, it is of interest to consider the case when $\gb$ tends to infinity. In this case, the cluster expansion would involve dependent random variables, which would not be convergent. 

Consider the case when $\go\sim \textrm{Exp}(1)$, we have
$\E e^{-\gb\go}=(1+\gb)^{-1},\psi(\gb)=-\log(1+\gb)$
and
$v_\gb^2=\gb^2/(1+2\gb)\approx \gb/2$
as $\gb\to\infty$. In general, if $\go$ is positive almost surely with $\pr(\go\le x)\approx x^\gk$ as $x\downarrow 0$ for some $\gk>0$, we have, 
\[
\E e^{-\gb\go}= \int_0^\infty e^{-x}\pr(\go\le x/\gb)dx \approx  \gC(1+\gk)\cdot \gb^{-\gk}
\]
for large $\gb$. In particular, $v_\gb^2 \approx \gb^{\gk}/(2^\gk\gC(1+\gk))$ for large $\gb$. Plugging this into Theorem~\ref{thm:log part clt} we get 
\[
\gb^{-\gk/2}(\log Z(\gb)+\gk m\log \gb +\gc \gb^{\gk}/2^\gk\gC(1+\gk)) \approx \N(0,2\gc/(2^\gk\gC(1+\gk))).
\]
In the scope of our work, $\gb$ is a constant. We expect our techniques to work similarly when $\gb$ tends to infinity sufficiently slow. However, if we allow $\gb=(\wh{\gb}\cdot n)^{1/\gk}$ (as expected from~\cite{WastlundReplicaRA}), and recall that $\log Z(\gb)/\gb$ convergences to $-M_n$, where $M_n$ is the minimum energy, one can expect the random variable
$
n^{1/\gk-1/2}\cdot (M_n - \E M_n)
$
to converge to a Gaussian distribution. We believe that investigating distributional limit in terms of general $\kappa$ and growing $\beta$ is challenging and requires novel ideas. 

\subsection{General block models}\label{sec:genblock}

It is of interest to generalize the approach used in this article to cases where $\cS$ consists of subgraphs of different sizes. For example, one can consider an $\ell$--partite graph with $\ell\ge2$ many blocks of sizes $n_1,n_2,\ldots,n_\ell$, however $\vm$ is not fixed. Although we expect such graphs to fall into our framework additional challenges arise. Firstly, the combinatorics prove to be more complicated as most known literature to us is conspired with studying the existence of the perfect matching or a Hamiltonian path in these more general graphs and not the numbers of these objects in them. Secondly, the $m$ edges of a valid configuration could be divided in a number of ways between the blocks creating an additional layer to the dependency and thus we can no longer take advantage of some of the convenient cancellations. However, one can combine our results for fixed $\vm$ from Section~\ref{sec:block models} with large deviation results for $\vm$ to handle such situations, which we leave for future research.


\section*{Acknowledgments}  
We thank Robert Krueger for suggesting the proof of Lemma~\ref{th:cayley}, and Qiang Wu for many enlightening discussions at the beginning stage of this project. Research of the first author was partially supported by Campus Research Board Grant RB23016. The second author was supported in part by the RTG award grant (DMS-2134107) from the NSF.

\bibliographystyle{imsart-nameyear}
\bibliography{opt.bib}
\end{document}